\author{Ivan Yuri Violo\footnote{\href{mailto:ivan.y.violo@jyu.fi}{ivan.y.violo@jyu.fi}, Department of Mathematics and Statistics, P.O.\ Box 35 (MaD), FI-40014 University of Jyvaskyla, Finland.}}
\newcommand{\diam}{\rm{diam}}
\newtheorem{theorem}{Theorem}[section]
\newtheorem{lemma}[theorem]{Lemma}
\newtheorem{prop}[theorem]{Proposition}
\newcommand{\restr}[1]{\lower3pt\hbox{$|_{#1}$}}
\newcommand{\la}{\langle}
\newcommand{\ra}{\rangle}
\newcommand{\sfd}{{\sf d}}
\newcommand{\nn}{\mathbb{N}}
\theoremstyle{definition}
\newtheorem{definition}[theorem]{Definition}
\newtheorem{question}[theorem]{Question}
\newtheorem{open}[theorem]{Open Problem}
\renewcommand{\phi}{\varphi}
\newtheorem{remark}[theorem]{Remark}
\newtheorem{cor}[theorem]{Corollary}
\newcommand{\nchi}{{\raise.3ex\hbox{$\chi$}}}
\DeclareMathOperator\rr{\mathbb{R}}
\DeclareMathOperator\eps{\varepsilon}
\numberwithin{equation}{section}
\renewcommand{\hat}{\widehat}
\newcommand{\T}{{\sf T}}
\newcommand{\arc}{{\rm arc}}
\newcommand{\Int}{{\rm int}}
\newcommand{\cl}{{\rm cl}}
\newcounter{numquote}
\title{Fat triangles inscribed in arbitrary planar domains}
\date{}
\begin{document}
\maketitle

\begin{abstract}
    In 1964 A.\ Bruckner observed that any bounded open set in the plane has an inscribed triangle, that is a triangle contained in the open set and with the vertices lying on the boundary. We prove that this triangle can be taken uniformly  fat, more precisely having all internal angles larger than $\sim 0.3$ degrees, independently of the choice of the initial   open set.  

     We also build a polygon in which all the inscribed triangles are not too-fat, meaning that at least one angle is less than $\sim 55$ degrees. 

   These results show the existence of a  maximal number $\Theta$ strictly between 0 and 60, whose exact value remains unknown, for which all bounded open sets admit an inscribed triangle with all angles larger than or equal to $\Theta$ degrees. 
\end{abstract}

\medskip
\noindent\textbf{MSC(2020).} Primary: 	51M04, 51F99.  Secondary: 53A04.\\
\textbf{Keywords.} Fat triangle, inscribed triangle, plane geometry, Jordan curve

\section{Introduction}
The goal of this note is, broadly speaking, to investigate how fat  the triangles inscribed in a given open set in the plane can be.   We say that a closed triangle $\T\subset \rr^2$ is {\emph{inscribed} in an open set $\Omega\subset \rr^2$ if the vertices of $\T$ belong to $\partial \Omega$ and if the rest of $\T$, together with its interior, lies in  $\Omega$. We stress that, if the interior of one side of $\T$ intersects $\partial \Omega$, then $\T$ is \emph{not} inscribed in $\Omega.$ Note also that we are not making any convexity assumption on $\Omega.$  It was observed by A.\ Bruckner in 1964 that inscribed triangles exist  in  every bounded open set.
\begin{theorem}[{Bruckner, \cite{bruckner}}]\label{thm:bruck}
    Given any non-empty  bounded open set $\Omega\subset \rr^2$ there exists a  non-degenerate triangle inscribed in $\Omega.$
\end{theorem}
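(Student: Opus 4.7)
My plan is to analyze the largest open disk inscribed in $\Omega$. By compactness of $\overline\Omega$, the distance function $x\mapsto d(x,\partial\Omega)$ attains a positive maximum $r>0$ at some $p\in\Omega$; set $B:=B(p,r)\subset\Omega$ and $S:=\partial B\cap\partial\Omega$, a non-empty closed subset of $\partial B$ (non-empty because otherwise a slightly larger disk would still fit inside $\Omega$, contradicting the maximality of $r$). The key step is to show $p\in\mathrm{conv}(S)$: if $S$ were contained in an open half-plane bounded by a line through $p$, translating $p$ slightly toward the opposite half-plane would strictly increase $d(p,\partial\Omega)$ --- using compactness of $S$ and that $\partial\Omega\setminus S$ is at distance strictly greater than $r$ from $p$, one gets a uniform improvement --- contradicting the maximality of $r$.

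By Carath\'eodory's theorem in $\rr^2$ and ruling out the impossible one-point case (since $r>0$), two configurations remain: (A) three non-collinear points $q_1,q_2,q_3\in S$ with $p$ in the interior of the triangle with vertices $q_1,q_2,q_3$; or (B) two antipodal points $q_1,q_2\in S$ with $p$ as midpoint of the chord $[q_1,q_2]$. Case (A) yields the conclusion at once: the closed triangle with vertices $q_i$ is contained in $\overline B\subset\overline\Omega$, and strict convexity of the disk forces its interior and the relative interiors of its sides to lie inside $B\subset\Omega$; with vertices in $\partial\Omega$ by definition of $S$, this triangle is inscribed.

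The main obstacle is case (B). My plan is to use the maximality of $r$ to produce a third boundary point. Perturb the center to $p+tw$ with $w$ a unit vector perpendicular to $q_1-q_2$: the points $q_1,q_2$ recede to distance $\sqrt{r^2+t^2}>r$, so by maximality of $r$ the perturbed ball $B(p+tw,d(p+tw,\partial\Omega))$ must touch $\partial\Omega$ at a new point $q_t\in\partial\Omega\setminus S$ lying outside $\overline B$. Compactness gives a subsequence $q_{t_n}\to q_1$ (say), and the inequalities $|q_{t_n}-p|\geq r$ and $|q_{t_n}-(p+t_nw)|\leq r$ force $q_{t_n}$ to approach $q_1$ essentially along the tangent to $\partial B$ at $q_1$, with its normal displacement second-order in the tangential one. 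It remains to verify that for some suitable $t_n$ the thin triangle with vertices $q_1,q_2,q_{t_n}$ is actually inscribed in $\Omega$. This is the technical heart of the argument: maximality of $B$ forces $\partial\Omega$ to be locally ``less curved'' than $\partial B$ at $q_1$ and $q_2$, and a local computation shows that the short chord $[q_1,q_{t_n}]$ sits on the $\Omega$-side of $\partial\Omega$; combined with a symmetric analysis near $q_2$ and the fact that the bulk of the triangle is close to the base chord $[q_1,q_2]\subset B\subset\Omega$, this completes the verification. I expect this local analysis near the two contact points to be the most delicate step, especially if $\partial\Omega$ is not smooth at $q_1$ or $q_2$.
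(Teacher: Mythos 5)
Your first half is sound and matches the machinery the paper itself uses: the maximal inscribed ball exists, and its contact set $S=\partial B\cap\partial\Omega$ cannot lie in any open half-plane through the center (this is precisely Lemma \ref{lem:inscribed ball}); and when $S$ contains three points not all on a single diameter, the chord geometry of the disk immediately produces an inscribed triangle, which is essentially \textsc{Case 1} in the proof of Proposition \ref{prop:fat or rectangle}.

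The gap is in your case (B), and it is not a removable technicality. Maximality of $B$ constrains $\partial\Omega$ only from \emph{inside} $B$: it says $\partial\Omega\cap B=\emptyset$ and nothing at all about $\partial\Omega$ outside $\overline B$. So the assertion that maximality forces $\partial\Omega$ to be ``locally less curved than $\partial B$'' at $q_1$ is unjustified (and false in general): $\partial\Omega$ may approach $q_1$ from outside $\overline B$ along an arbitrarily wild arc --- a crack spiralling into $q_1$, say --- whose only effect is to force the perturbed radii $r_t=\sfd(p+tw,\partial\Omega)$ to be strictly smaller than $r$. The new contact point $q_t$ lies on $\partial B(p+tw,r_t)$, but $q_1\notin \overline{B(p+tw,r_t)}$ and $q_t\notin\overline B$, and the short chord $[q_1,q_t]$ is nearly tangent to $\partial B$ at $q_1$, hence lies essentially outside $B\cup B(p+tw,r_t)$; no ball of your family covers it, so nothing prevents $\partial\Omega$ from meeting its relative interior for every $t$. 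The same objection applies to the sliver of the thin triangle near the vertex $q_1$. The paper's way around this (\textsc{Case 2} of the proof of Proposition \ref{prop:fat or rectangle}) is genuinely different: it does not perturb the ball, but takes a maximal rectangle, slides a candidate third vertex $R_t$ outward along a horizontal line, takes the first parameter $t_0$ at which one of the segments $PR_t$, $QR_t$ meets $\partial\Omega$, and then selects on those segments the touching point $S$ nearest to $\{P,Q\}$. The quantitative facts \eqref{eq:safe segments p} and \eqref{eq:safe segments q} guarantee that both sides emanating from $P$ and $Q$ dive into $B$ by a definite arc before they can meet the boundary, so minimality of $S$ makes $PQS$ inscribed. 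Some substitute for this first-hitting mechanism --- or for the ``dive into the ball'' property, which your nearly tangent chord $[q_1,q_t]$ conspicuously lacks --- is exactly what your sketch is missing.
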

It was also noted in \cite{bruckner} that the same result does not hold if we replace triangles with polygons with more than three sides (without self-intersections). An immediate counterexample is given by an open set whose boundary consists of three strictly convex arcs. Boundedness is also necessary, as can be seen by taking  $\Omega=\{(x,y)\in \rr^2 \ : \ y<x^2\}.$ 

In view of Theorem \ref{thm:bruck}, we  can now ask what are the shapes and sizes of the triangles inscribed in a given open set.  Even if very basic, to the best of our knowledge, this question has not been much investigated. We mention however that the topic of inserting polygons inside regions of the plane satisfying suitable constraints is widely studied (see e.g.\ \cite{conv1,conv2,conv3,conv4,conv5,tria1,tria2,tria3}) and has interest also in computational geometry (see for example \cite{comp1,comp2,comp3,comp4}).

We consider first the special case in which $\Omega$ is the interior of  a planar Jordan curve. Recall that given any  $\gamma$  Jordan curve (i.e.\ a simple closed  curve in the plane) the open set $\rr^2\setminus \gamma$ consists of two open connected components, one bounded called the interior  and one unbounded called the exterior.  
A first natural question is the following: 
\begin{question}\label{q:equilateral}
Given any Jordan curve $\gamma$, is there an \emph{equilateral triangle} $\T$ inscribed in the interior of $\gamma$?
\end{question}
Let us first recall that, if we relax the hypotheses on $\T$ requiring only that its vertices lie in $\gamma$, 
allowing the interior of $\T$ to intersect the exterior of $\gamma$, the answer to this question is affirmative and is a  classical result  proved in \cite{meyerson} (see also \cite{nielsen}). Even more in \cite{meyerson} it is shown that any Jordan curve $\gamma$ contains three points that are the vertices of a triangle similar to any given one. The set of such points is actually dense in $\gamma$ (see \cite{nielsen}).
We mention that a  related  famous old open problem  is to determine whether any Jordan curve contains four points that are the vertices of a square. There is a vast literature about this and related questions (see e.g.\ \cite{peg,aspratio,bruckner,cyclic,neck,root3,rectangles,trich,mats,notet,tao} and also \cite{survey1,survey2} for surveys on the topic). 

Nevertheless  we show that Question \ref{q:equilateral} has negative answer, even for polygons.
\begin{theorem}\label{thm:no equilaterals}
    There exists a simple closed polygonal curve with no equilateral triangles inscribed in its interior. 
\end{theorem}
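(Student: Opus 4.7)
First I observe that the polygon $P$ must be non-convex. Indeed, by Meyerson's theorem (quoted in the excerpt) the Jordan curve $\partial P$ contains three points forming the vertices of an equilateral triangle, and in fact the set of such triples is dense in $\partial P$. If $P$ were convex, one could select such a triple with all three vertices lying on three distinct edges of the polygon: the resulting triangle then has its sides in the open interior of $P$ (chords of a convex polygon connecting points on different edges) and its vertices on $\partial P$, so it would be inscribed in the sense of the excerpt. Hence any counterexample $P$ must be non-convex, and the non-convexity has to be used to force each candidate equilateral triangle to leave $\bar P$.

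For the construction, I propose to take inspiration from the Bruckner-type example alluded to in the excerpt, namely a region bounded by three strictly convex arcs. Pick three non-collinear points $V_1,V_2,V_3$ that form a scalene triangle, and for each pair $\{V_{i-1},V_{i+1}\}$ replace the straight segment $V_{i-1}V_{i+1}$ by a convex polygonal arc $\alpha_i$ that bulges into the interior of $V_1V_2V_3$. The resulting simple closed polygonal curve bounds a non-convex, simply-connected polygonal domain $P$ shaped like a ``curvilinear concave triangle''.

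The first step of the analysis is the structural claim that, for such $P$, any inscribed triangle $\T$ has exactly one vertex on each arc $\alpha_i$. If two vertices of $\T$ lay on the same arc $\alpha_i$, then by convexity of the polyline $\alpha_i$ the segment between them lies on the non-region side of $\alpha_i$, i.e.\ outside $\bar P$, contradicting $\T\subset\bar P$. Thus the inscribed triangles are parametrised by the three arc-positions, giving a three-parameter family, and one is reduced to ruling out equilateral triangles within that family.

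The main step, and the principal obstacle, is to choose the $V_i$'s and the shapes of the arcs so that no equilateral triangle appears in this three-parameter family. Since even a scalene straight-sided triangle $V_1V_2V_3$ does admit inscribed equilateral triangles with one vertex per side, a simple perturbative/continuity argument from the straight case is insufficient: the inward bulging of the arcs has to \emph{actively} disrupt every candidate equilateral configuration, by forcing at least one of its sides to cross one of the polylines $\alpha_j$ and thus leave $\bar P$. Making the arcs deep enough for this disruption, while keeping them shallow enough to preserve the structural one-vertex-per-arc reduction and to keep $\partial P$ simple, by an explicit quantitative construction, is where the real work lies.
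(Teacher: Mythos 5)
Your proposal is not a proof: the decisive step is missing, and you say so yourself. After the (correct) reduction to triangles with one vertex on each of the three inward-bulging convex polygonal arcs, everything hinges on exhibiting specific points $V_1,V_2,V_3$ and specific arcs for which no equilateral triangle survives, and on actually verifying this. You explicitly defer this ("where the real work lies"), and it is far from clear that it can be done within this family: for shallow bulges a rotation/continuity argument of Meyerson type still produces an inscribed equilateral triangle with one vertex per arc (as you note for the straight-sided case), and for deep bulges you would need a genuinely new mechanism to certify that \emph{every} one-vertex-per-arc equilateral configuration has a side crossing one of the arcs. Nothing in the proposal supplies that mechanism or even evidence that some choice of parameters works. (Two smaller points: the density statement from Nielsen does not by itself guarantee a Meyerson triple with its three vertices on three distinct edges of a convex polygon, though this preliminary remark is inessential; and two vertices on the same edge of a convex polygonal arc give a triangle side lying \emph{on} $\partial P$ rather than outside $\cl(P)$ -- the conclusion that such a triangle is not inscribed still holds, but for a different reason than the one you give.)

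The paper's construction is based on a different geometric mechanism. Instead of a concave triangle, it uses an hourglass-shaped hexagon with a narrow waist: two vertices $A=(-\tfrac12,0)$, $B=(\tfrac12,0)$ at unit distance pinch the domain, while the other four vertices sit on the lines $y=\pm\tfrac1{\sqrt2}$ very far to the left and right, so the four long sides are nearly horizontal. A case analysis (Theorem \ref{thm:hourglass}) shows that every inscribed triangle either stays in one half of the hourglass, where it is squeezed into a strip of height $\tfrac1{\sqrt2}$ over a base of length at least $1$, or crosses the waist, where the pinch at $A,B$ forces a long nearly-horizontal side; in all cases Lemma \ref{lem:equivalent fatness} bounds the smallest angle by $\arctan(\sqrt2)<\tfrac\pi3$. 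This yields the stronger quantitative Theorem \ref{thm:no fat}, of which the non-existence of inscribed equilateral triangles is an immediate corollary. If you want to salvage your approach, you would need to supply the quantitative disruption argument for an explicit choice of arcs; as it stands, the proposal identifies a plausible candidate family but proves nothing about it.
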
 
The polygonal curve $\gamma$ of Theorem \ref{thm:no fat} is explicit and shown in Figure \ref{fig:intro}.
\begin{figure}[!ht]
\centering
\includegraphics[width=10cm, height=3cm]{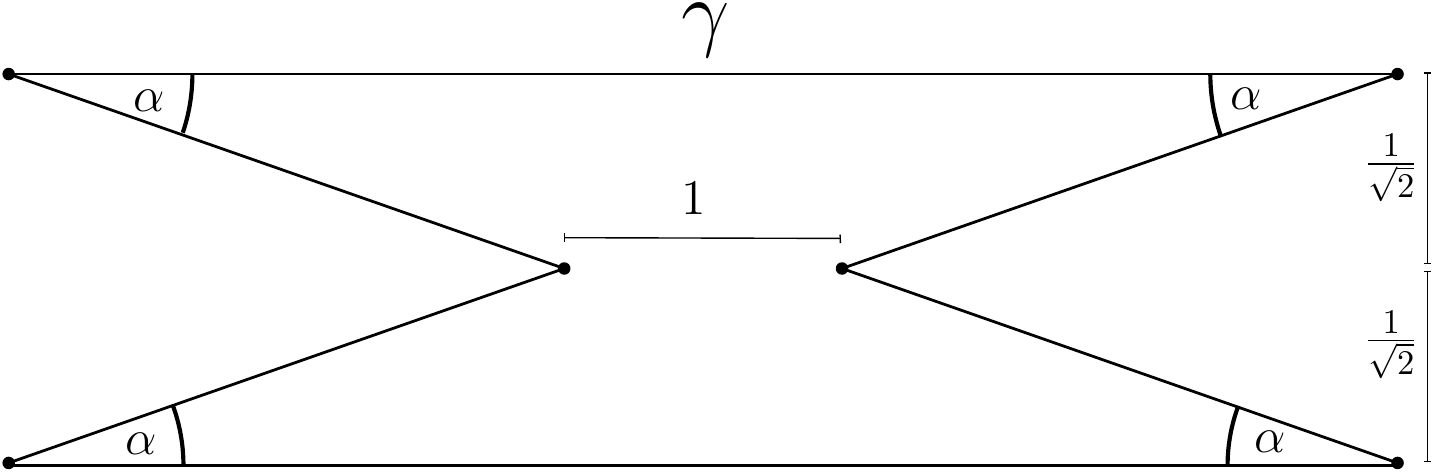}
\caption{The polygon in Theorem \ref{thm:no equilaterals}. The angle $\alpha$ is any angle smaller than $\arcsin(\frac{\sqrt 2}{4})$ radians}
\label{fig:intro}
\end{figure}
Actually we prove that Theorem \ref{thm:no equilaterals} holds in a quantified way.  To make this precise we need  some notation. Given a triangle $\T$ in the plane we introduce the number 
$$\alpha(\T)\in \left[0,\frac{\pi}{3}\right]$$ defined as the minimum of its interior angles, measured in radians. The number $\alpha(\T)$ should be thought of as a measure of the fatness of  $\T.$ In particular 
$\alpha(\T)=\frac \pi3$ if and only if $\T$ is equilateral, while $\alpha(\T)=0$ if and only if $\T$ is degenerate. 
\begin{theorem}\label{thm:no fat}
  There exists a Jordan curve $\gamma$ such that
    \begin{equation}\label{eq:no fat}
         \alpha(\T)\le \arctan(\sqrt 2)
    \end{equation}
     for every triangle $\T $ inscribed in the interior of $\gamma.$
\end{theorem}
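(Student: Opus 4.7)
The plan is to prove the bound \eqref{eq:no fat} by exhibiting the explicit polygonal Jordan curve $\gamma$ depicted in Figure~\ref{fig:intro} and then carrying out a direct case analysis on where the vertices of a putative inscribed triangle $\T$ can lie. I would fix the parameter $\alpha_0<\arcsin(\sqrt{2}/4)$ of the figure (which controls a pair of narrow notches of $\gamma$) and let $\Omega$ denote the bounded component of $\rr^2\setminus\gamma$.

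The first step is to enumerate the edges of $\gamma$. Since each vertex of an inscribed triangle must lie on some edge of the polygon, the problem reduces to examining all assignments of the three vertices of $\T$ to the finitely many edges. Exploiting the left--right symmetry of the figure, and identifying configurations up to relabelling of the vertices of $\T$, this enumeration collapses to a small list of essentially distinct cases.

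In each such case I would invoke the key containment constraint: the sides of $\T$ must lie strictly inside $\Omega$, i.e.\ no side of $\T$ may cross $\partial\Omega$ except at a vertex of $\T$. This visibility condition confines each vertex of $\T$ to an explicit sub-arc of its assigned edge. Inside that restricted parameter region the three interior angles of $\T$ are elementary trigonometric functions of the three positions, and the task becomes that of maximising the minimum angle under the restrictions. The threshold $\arcsin(\sqrt{2}/4)$ on $\alpha_0$ is exactly what is needed so that this maximum-of-minimum never exceeds $\arctan(\sqrt{2})$ in any case, yielding \eqref{eq:no fat}.

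The hard part is expected to be the "central" configurations, namely those in which the three vertices are distributed among the edges in such a way that $\T$ could a~priori be close to equilateral. Here I anticipate having to use both geometric features of $\gamma$ simultaneously: the very acute angle $\alpha_0$ of the notches (which prevents a fat triangle from placing a vertex deep inside a notch while its sides clear the notch's flanks) and the elongated aspect ratio of the polygon (which prevents near-equilateral triangles from being accommodated at all). In the remaining peripheral cases — two vertices on the same edge, or two vertices on edges meeting at an acute vertex of $\gamma$ — the triangle is automatically forced to have a very small angle, and the bound is immediate.
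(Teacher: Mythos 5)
Your overall strategy --- take the hourglass polygon of Figure \ref{fig:intro} with notch angle below $\arcsin(\sqrt 2/4)$ and run a case analysis over which closed sides of the polygon carry the vertices of $\T$, using the visibility constraint that the sides of $\T$ may not cross the boundary --- is exactly the paper's approach (Theorem \ref{thm:hourglass}). However, as written the proposal is a plan rather than a proof: none of the quantitative steps that actually produce the number $\arctan(\sqrt 2)$ are carried out, and in the one place where you commit to a mechanism you attribute the constant to the wrong feature of the polygon. The bound $\arctan(\sqrt 2)$ does not come from the threshold $\arcsin(\sqrt 2/4)$ on the notch angle; it is pinned by the geometry of the waist. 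In the paper's construction the horizontal sides sit at heights $\pm\frac{1}{\sqrt 2}$ and the waist $AB$ has length $1$, so in the configuration $P_1=A$, $P_2=B$, $P_3$ on a horizontal side one gets base $1$ and height $\frac{1}{\sqrt 2}$, whence $\alpha(\T)\le\arctan\bigl(2\cdot\tfrac{1/\sqrt2}{1}\bigr)=\arctan(\sqrt 2)$ via the elementary inequality $\alpha(\T)\le\arctan(2\overline{CH}/\overline{AB})$ (Lemma \ref{lem:equivalent fatness}). Shrinking the notch angle further does not improve this; the condition $\alpha_0\le\arcsin(\sqrt2/4)$ is only needed to make the remaining configurations no worse than this waist configuration.

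The genuinely hard case, which you only ``anticipate'' handling, is the one where one vertex lies in the interior of a slanted side, a second lies in the closed lower half-plane and the third in the open upper half-plane (the paper's Case 2b): there one must combine the fact that the connecting segment has to thread through the unit-length waist --- forcing it to be nearly horizontal and hence of length at least $\frac{1}{\sqrt2\sin\alpha_0}$ --- with an upper bound of $\sqrt 2$ on the corresponding height, and this is precisely where $\arcsin(\sqrt2/4)$ enters, via $\arctan(4\sin\alpha_0)\le\arctan(\sqrt2)$. Without such an estimate the case analysis does not close, since a priori a fat triangle could straddle the waist with all three vertices on slanted or horizontal sides. A smaller point: for an \emph{inscribed} triangle (in the paper's sense) two vertices can never lie on the same closed side of the polygon, since the segment joining them would lie in $\partial\Omega$; that peripheral case is vacuous rather than ``forced to have a very small angle''.
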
 
The curve $\gamma$ in Theorem \ref{thm:no fat} can again be taken to be the polygon  in Figure \ref{fig:intro}.
Observe that, since $\arctan(\sqrt 2)<\frac{\pi}{3}$, Theorem \ref{thm:no fat} implies Theorem \ref{thm:no equilaterals}. 
Notably, the angle $\arctan(\sqrt 2)$  ($\sim  54.7^\circ$) is sometimes called \emph{magic angle}  in the literature (see \cite{magic}) and coincides with the angle formed by the diagonal of a cube and an edge adjacent to it.

\begin{remark}[Stronger version of Theorem \ref{thm:no fat}]\label{rmk:stronger}
    Observe that the polygon $\gamma$ in Figure \ref{fig:intro} (and actually any closed polygon) has  many inscribed equilateral triangles $\T$ if we allow one of the sides of $\T$ to be contained in one of the sides of the polygon. Recall however that in our definition of inscribed triangle only the vertices  intersect the boundary. Hence one might wonder if Theorem \ref{thm:no fat} for Jordan curves is still true allowing the sides of inscribed triangles to intersect the curve. Call this type of triangles \emph{weakly inscribed} (see Def.\ \ref{def:w}). It is then easy to slightly modify the example in Figure \ref{fig:intro} to obtain a (non-polygonal) Jordan curve $\gamma$ where all the weakly inscribed triangles are actually inscribed and such that \eqref{eq:no fat} still holds (up to an arbitrary small error)  for  every $\T $  inscribed in the interior of $\gamma$.  This can be done by  slightly bumping inward the sides of the polygon to make them strictly convex.  See Corollary \ref{cor:hourglass} for a precise statement.
\end{remark}
We do not know  if $\arctan(\sqrt 2)$ is the best constant that we can put in \eqref{eq:no fat}, but it is the best constant that we can get with our construction (see Remark \ref{rmk:optimized}). 
Actually it is not at all obvious that the best constant is not zero.  Indeed a priori it could be that for any $\eps>0$ there exists a Jordan curve such that  $\alpha(\T)<\eps$ for every triangle $\T$ inscribed in its interior. Our  second  goal is precisely to prove that this does not happen, by giving a lower bound for the optimal constant in \eqref{eq:no fat}. In other words we show that the interior of every Jordan curve admits an inscribed triangle  which is \emph{not thin} by a definitive universal amount. Actually we will prove that  this is the case for arbitrary open and bounded sets. 
In particular our second main theorem reads as follows.
\begin{theorem}\label{thm:main}
         For any non-empty open and bounded set $\Omega\subset \rr^2$ there exists a  triangle $\T$ inscribed in $\Omega$  satisfying
        \begin{equation}\label{eq:main}
            \alpha(\T)\ge\frac14 \arcsin\left( \frac1{45}\right).
        \end{equation}
        Moreover $\T$ can be taken so that $ A(\T)r(\Omega)^{-2}\ge 10^{-5}$, where $A(\T)$ is the area of $\T$ and $r(\Omega)$ is the maximal radius of a ball contained in $\Omega$ (see Def.\ \ref{def:r}).
\end{theorem}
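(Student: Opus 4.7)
The plan is to combine the structure of a largest inscribed disk in $\Omega$ with a careful local analysis of $\partial \Omega$ near $\partial B$. After rescaling, assume $r(\Omega)=1$; fix $x_0\in\Omega$ with $B:=B(x_0,1)\subset\Omega$, and denote the contact set $K:=\partial B\cap\partial\Omega$. By the maximality of $r(\Omega)$, $K$ is non-empty and cannot be contained in any open half-plane through $x_0$: otherwise a small translation of $B$ in the opposite direction would stay compactly in $\Omega$ (by a compactness argument separating $K$ from its limits on the closed complementary half-plane), and hence could be slightly enlarged, contradicting $r(\Omega)=1$. In particular $x_0\in\operatorname{conv}(K)$.

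The central geometric observation is that any triangle whose vertices lie on $\partial B$ has its closure in $\overline B\subset\overline\Omega$, with open sides and open interior in $B\subset\Omega$; by the inscribed-angle theorem its interior angles equal half the measures of the three arcs determined by the vertices on $\partial B$. Thus the favourable case is when $K$ contains three points partitioning $\partial B$ into arcs each of measure at least $2\eta$, for some threshold $\eta>0$ to be chosen: the resulting chord-triangle is automatically inscribed in $\Omega$ with $\alpha(\T)\ge\eta$, and by selecting the two most-separated contact points so that one side is a near-diameter of $B$, one also gets a side of length comparable to $r(\Omega)$, giving area of definite order $r(\Omega)^2$.

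The delicate case is when $K$ is angularly concentrated, essentially reducing to a pair of nearly antipodal points $a,b\in\partial B$. The naive choice of a third vertex $c$ as the first hit of $\partial\Omega$ transverse to $ab$ can yield an arbitrarily thin triangle when $\Omega$ is elongated perpendicularly to $ab$ (the strip $(-1,1)\times(-M,M)$ for $M$ large is the model obstruction). The remedy is to localise to a ``far end'' of $\Omega$: either one produces a fat inscribed triangle near an extremal point of $\partial\Omega$ (where $\partial\Omega$ must bend around), or one iteratively restricts to a narrower sub-region of $\Omega$ admitting its own maximal inscribed disk of radius a controlled fraction of $1$ and whose own contact set is less degenerate. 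Tracking the worst-case losses of angular spread and inradius along a bounded number of such localisation steps, combined with the choice of threshold $\eta$ in the favourable case, is what forces the explicit constants $\tfrac14\arcsin(1/45)$ and $10^{-5}$.

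The main obstacle is precisely this reduction in the delicate case: one must check that each localisation step preserves a definite fraction of the ambient inradius, that the resulting sub-region still fits within the structural picture above, and, crucially, that the triangle eventually produced remains inscribed in the \emph{global} $\Omega$, i.e.\ that no intruding arc of $\partial\Omega$ crosses its interior. The resulting constants emerge from the worst case of this bookkeeping and are certainly not expected to be sharp.
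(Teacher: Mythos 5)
Your opening matches the paper's strategy: rescaling to $r(\Omega)=1$, taking a maximal inscribed disk, using that the contact set cannot lie in an open half-plane (Lemma \ref{lem:inscribed ball}), and disposing of the case of three well-separated contact points via the inscribed-angle theorem is exactly Case 1 of Proposition \ref{prop:fat or rectangle}. The problem is that the ``delicate case'' --- which is essentially the entire content of the theorem --- is left as a plan rather than a proof, and the plan as stated does not work quantitatively. You propose to ``iteratively restrict to a narrower sub-region of $\Omega$ admitting its own maximal inscribed disk of radius a controlled fraction of $1$'' over ``a bounded number of such localisation steps.'' No bound on the number of steps independent of $\Omega$ is available: the model obstruction you yourself mention (a strip, or more generally a long thin tube or spiral of width just above $2$) can force any such walk towards a ``far end'' to take on the order of $\diam(\Omega)$ steps. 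If each step loses a definite fraction of inradius or of angular spread, the constant produced after $\sim\diam(\Omega)$ steps degenerates with $\Omega$, and no universal lower bound such as $\frac14\arcsin(\frac1{45})$ results.

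The paper's resolution is structurally different at exactly this point. The iterated object is not a sub-region with its own inscribed disk but a long inscribed rectangle of half-width $a_i\in[2/3,1]$ and half-length $A_i$; Proposition \ref{prop:cases} shows that at each step one either obtains a triangle whose fatness is a \emph{universal} constant (independent of the step index), or a new rectangle whose half-width has increased by the additive amount $\frac{1}{6\diam(\Omega)}$ while the half-length decreases in a controlled way, see \eqref{eq:Acontrol}. Since $a_i\le 1$ is forced by $r(\Omega)=1$, the iteration must terminate after at most $\sim\diam(\Omega)$ steps, but the constants in the conclusion do not depend on how many steps were taken --- only the termination argument does. Establishing the dichotomy of Proposition \ref{prop:cases} is itself nontrivial (it rests on the good/bad rectangle analysis, the technical Lemma \ref{lem:trick set} on upper semicontinuous functions, and Lemmas \ref{lem:general box} and \ref{lem:easy case}), and nothing in your proposal substitutes for it. You also do not address how the triangle produced deep inside a localised sub-region is shown to be inscribed in the global $\Omega$ beyond flagging it as a concern; in the paper this is handled by explicit minimality/sweeping arguments (e.g.\ the choice of $t_0$ and of the point $S$ in Case 2 of Proposition \ref{prop:fat or rectangle}, and the vertical-segment arguments in Lemma \ref{lem:easy case}). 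As written, the proof has a genuine gap in the delicate case.
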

 Theorem \ref{thm:main} can be seen as an effective improvement of the existence result of Bruckner in Theorem \ref{thm:bruck} above. This result also implies that the constant $\arctan(\sqrt 2)$ in Theorem \ref{thm:no fat} cannot be decreased below $\frac14 \arcsin\left( \frac1{45}\right).$ Our proof of Theorem \ref{thm:main} is  constructive, in the sense that the triangle $\T$ in the statement is built following an explicit algorithm. An additional feature of the construction is that $\T$ can be taken  roughly of size as large  as possible, i.e.\ with area comparable to the largest ball inside $\Omega,$ as specified in the second part of the statement. 
Concerning the value $\frac14 \arcsin\left( \frac1{45}\right)$ ($\sim 0.31^\circ$) in \eqref{eq:main}, it is very likely not-sharp since we did not try to optimize the parameters and the estimates in the argument.

We also present an immediate application of Theorem \ref{thm:main}.
\begin{cor}\label{cor:platform}
    Let $f:\rr^2\to \rr$ be a continuous functions such that the set $\{f<0\}$ is non-empty and bounded. Then there are three points in $\rr^2$ where $f$ vanishes and which are the vertices of a triangle with all angles larger than $0.3$ degrees and with interior  contained in $\{f< 0\}$.
\end{cor}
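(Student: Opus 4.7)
The strategy is to apply Theorem \ref{thm:main} directly to the open set $\Omega := \{f<0\}$, which is non-empty and bounded by hypothesis and open by continuity of $f$. The only content of the corollary beyond this application is (i) identifying $\partial \Omega$ with the zero set of $f$ at the vertices, and (ii) checking the numerical conversion from radians to degrees.

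For point (i), I would argue that any boundary point $p\in\partial\{f<0\}$ satisfies $f(p)=0$: if $f(p)<0$, continuity gives a whole neighborhood on which $f<0$, so $p$ is interior to $\Omega$; if $f(p)>0$, continuity gives a neighborhood on which $f>0$, preventing $p$ from being a limit of points of $\Omega$. Applying Theorem \ref{thm:main} to $\Omega$ then produces a triangle $\T$ whose three vertices are on $\partial\Omega$ (hence zeros of $f$) and whose sides and interior lie in $\Omega=\{f<0\}$, which is exactly the geometric conclusion required.

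For point (ii), I need to verify that $\frac14 \arcsin(\frac{1}{45})$ radians exceeds $0.3^\circ = \pi/600$ radians. Since $\arcsin$ is concave on $[0,1]$ with $\arcsin(x)\ge x$, we have
\begin{equation*}
\tfrac14 \arcsin\!\left(\tfrac{1}{45}\right) \;\ge\; \tfrac{1}{180} \;>\; \tfrac{\pi}{600},
\end{equation*}
the last inequality being equivalent to $\pi < 10/3$, which holds. Hence each interior angle of $\T$ exceeds $0.3$ degrees, as required.

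The proof is essentially a one-line application of Theorem \ref{thm:main}, so there is no real obstacle; the only care needed is the standard continuity argument identifying $\partial\{f<0\}$ with a subset of $f^{-1}(0)$ and the elementary numerical check above.
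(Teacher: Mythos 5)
Your proposal is correct and is exactly the "immediate application" of Theorem \ref{thm:main} that the paper intends (the paper gives no written proof of Corollary \ref{cor:platform}): openness of $\{f<0\}$, the identification $\partial\{f<0\}\subset f^{-1}(0)$ by continuity, and the check $\tfrac14\arcsin(\tfrac1{45})\ge \tfrac1{180}>\tfrac{\pi}{600}$ are all that is needed. One tiny slip in the justification: $\arcsin$ is convex, not concave, on $[0,1]$, but the inequality $\arcsin(x)\ge x$ you actually use holds anyway (since $\tfrac{d}{dx}\arcsin(x)=1/\sqrt{1-x^2}\ge 1$), so the argument stands.
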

The above result is related to the famous \emph{table theorem} about placing a  square table with the feet on  an uneven ground, modeled by the graph of continuous function $f:\rr^2 \to \rr$. More precisely this amounts to find four points in $\rr^2$  where $f$ takes the same value and that are the vertices of a square (we refer to  \cite{tab1,tab2,tab3,tab4,tab5} and references therein for more details and versions of the problem).
Corollary \ref{cor:platform} can be thought as a variant where we instead want to  \emph{balance a triangular platform}  on  an uneven ground. Indeed in this case not only the vertices of the platform must be at the same level, but also the rest of the platform must be above ground, which  in the statement corresponds to the interior being contained in $\{f<0\}$.


We conclude this introduction with some further comments and a few open questions. It is relevant to introduce the following constant:
\[
\Theta \coloneqq\inf_{\Omega \subset \rr^2, \text{ open and bounded}}\sup \{ \alpha(\T) \ : \  \T \text{ triangle inscribed in $\Omega$}\}.
\]
Equivalently $\Theta$ is the unique number such that the following hold:
\begin{enumerate}[label=\roman*)]
	\item every non-empty open and bounded set $\Omega\subset \rr^2$ admits an inscribed triangle with all angles $\ge\Theta$ (radians) ,
	\item for every $\beta>\Theta$ there exists an open and bounded set $\Omega\subset \rr^2$ such that every triangle inscribed in $\Omega$ has all angles  $<\beta$ (radians).
\end{enumerate}
Note that  by definition $\Theta\in [0,\frac\pi 3]$,  however our results imply that $\Theta$ assumes a \emph{non-trivial value}, i.e.\ $\Theta\in(0,\frac{\pi}{3})$.   Indeed combining Theorem \ref{thm:no fat} and Theorem \ref{thm:main} we obtain the following explicit bounds for $\Theta.$
\begin{cor}
\begin{equation}\label{eq:theta bounds}
    \frac14 \arcsin\left( \frac1{45}\right)\le \Theta\le  \arctan(\sqrt2).
\end{equation}
\end{cor}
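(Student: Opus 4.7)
The plan is to derive both inequalities directly from the two main theorems of the paper, using only the definition of $\Theta$ as an infimum of suprema. No new geometry is required at this stage; the work is essentially bookkeeping between $\inf\sup$ and the two existence/nonexistence statements.

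For the upper bound $\Theta \le \arctan(\sqrt 2)$, I would invoke Theorem \ref{thm:no fat}: it supplies a Jordan curve $\gamma$ whose interior $\Omega_0$ is a non-empty bounded open set with $\alpha(\T)\le \arctan(\sqrt 2)$ for every inscribed triangle $\T$. Hence
\[
\sup\{\alpha(\T)\ :\ \T\text{ inscribed in }\Omega_0\}\le \arctan(\sqrt 2),
\]
and taking the infimum over all bounded open sets on the left of the definition of $\Theta$ yields $\Theta\le \arctan(\sqrt 2)$.

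For the lower bound $\Theta\ge \frac14\arcsin(\frac{1}{45})$, I would apply Theorem \ref{thm:main}: for every non-empty bounded open $\Omega\subset\rr^2$ there exists at least one inscribed triangle $\T_\Omega$ with $\alpha(\T_\Omega)\ge \frac14\arcsin(\frac{1}{45})$. Therefore for each such $\Omega$
\[
\sup\{\alpha(\T)\ :\ \T\text{ inscribed in }\Omega\}\ge \alpha(\T_\Omega)\ge \frac14\arcsin\!\left(\frac{1}{45}\right),
\]
and passing to the infimum over $\Omega$ gives $\Theta\ge \frac14\arcsin(\frac{1}{45})$.

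There is essentially no obstacle here: the only subtlety to double-check is that both theorems apply to the same class of objects defining $\Theta$, namely non-empty open bounded subsets of $\rr^2$. Theorem \ref{thm:main} is stated exactly for this class, and the interior of the Jordan curve in Theorem \ref{thm:no fat} is a bounded open set, so the two bounds combine cleanly to give \eqref{eq:theta bounds}.
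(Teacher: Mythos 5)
Your proof is correct and is exactly the argument the paper intends: the corollary is stated as an immediate consequence of combining Theorem \ref{thm:no fat} (applied to the interior of the polygon, a bounded open set) for the upper bound and Theorem \ref{thm:main} for the lower bound, with only the $\inf$--$\sup$ bookkeeping you carry out. No gaps.
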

This motivates asking the following questions, which we believe interesting for future investigations.
\begin{open}
  What is the value of the constant  $\Theta$? Alternatively, can we improve the bounds in \eqref{eq:theta bounds}?
\end{open}
\begin{open}
 Does it exist an open and bounded set $\Omega \subset \rr^2$ that realizes $\Theta,$ i.e.\ such that $\alpha(\T)\le \Theta$ for every triangle $\T$ inscribed in $\Omega$?
\end{open}

The paper is structured as follows. In Section \ref{sec:pre} we fix some notations and gather some preliminaries. In Section \ref{sec:hourglass} we 
prove Theorems \ref{thm:no equilaterals} and \ref{thm:no fat}. The proof of Theorem \ref{thm:main} will be given in Section \ref{sec:main}, except for a technical lemma about functions on the real line which is postponed to Section \ref{sec:lemma}.

\medskip

\textbf{Acknowledgements.} 
We are grateful  to  Petri Juutinen for proposing Question \ref{q:equilateral}, which initially motivated us to study the properties of inscribed triangles.
The author also thanks Katrin F\"assler and Tuomas Orponen for fruitful discussions on the topic.
The  author is supported by the Academy of Finland project Incidences on Fractals, Grant No.321896.

\section{Preliminaries and notations}\label{sec:pre}
We gather in this section some of the notations, conventions and elementary results that we will use in the sequel. 

The symbols $\cl(C)$, $\Int(C)$ and $\partial C$ will indicate respectively the topological closure, interior and boundary of  a set $C\subset \rr^2$. Also with $C^c$ we will denote its complement $\rr^2\setminus C.$
Given two points $A,B\in \rr^2$ we will denote by  $AB$ the closed segment with endpoints $A$ and $B$ and we with $\overline{AB}$ its length. 
Given three points $A,B,C\in \rr^2$, the triangle  $\T$ of vertices $A,B,C$ will denote the closed convex hull of $\{A,B,C\}$. By $A(\T)$ we will indicate the area of $\T.
$ We will also denote by $\hat A,\hat B,\hat C$ the interior angles  of $\T$ (measured in radians) respectively at the vertices $A,B,C.$ 

Often we will implicitly  fix a coordinates-system of $\rr^2$, i.e.\ $\rr^2=\{(x,y) \ : \ x,y \in \rr\}$. Then, with a slight abuse of notation, we will often write $\{x=a\}$, $a\in \rr$, to denote the vertical lines $\{(x,y) \in \rr^2 \ : x=a \ \}$ and similarly for the horizontal lines $\{y=a\}$, the strips $\{a<x<b\}$, $\{a\le x<b\}$, $\{a<y<b\}$, the half planes $\{y<a\}$, $\{x\ge a\}$ and so on.
Given an affine function $l:\rr \to \rr$ we will write $y=l(x)$, or just $l$, to denote the non-vertical line  $\{(x,y) \in \rr^2\ : \ y=l(x)\}$ given in Cartesian coordinates. 
We will also denote by $l_\le$ and $l_<$ respectively the open and closed lower half planes determined by $l$, that is $l_\le= \{(x,y)\in \rr^2 \ : \ y\le l(x)\}$ and $l_< =\{(x,y)\in \rr^2 \ : \ y< l(x)\}.$

By $\la x,y\ra$ we will denote the scalar product between points $x,y \in \rr^2$.

For every $r>0$ and $x\in \rr^2$ we defined the open ball of radius $r$ centred at $x $ by  $B_r(x)\coloneqq\{y \in \rr^2 \ : \ |x-y|<r\}$. Additionally for any set $C\subset \rr^2$ we define $\sfd(x,C)\coloneqq \sup \{|x-y|\ :\ y \in C\}.$

We will say that a closed rectangle  $\mathcal R\subset \rr^2$ is \emph{of sides-length} $A\times B$, for some $A,B \in (0,\infty)$, if  $\mathcal R=[a,b]\times[c,d]$ with $|a-b|=A$ and $|c-d|=B$, with respect to some choice of coordinates-system.

Next we introduce  the maximal radius of a ball contained in an open set.
\begin{definition}[Maximal inscribed radius]\label{def:r}
    For every $\Omega \subset \rr^2$ open and bounded define the number
    \begin{equation}\label{eq:inradius}
         r(\Omega)\coloneqq \sup \{r>0 \ : \ \text{there exists $B_r(x)\subset \Omega$}\} \in (0,\infty). 
    \end{equation}
\end{definition}
The following elementary observation will be useful.
\begin{lemma}[Existence of maximal inscribed ball]\label{lem:inscribed ball}
    Let $\Omega \subset \rr^2$ be a non-empty bounded open set. Then the supremum in the definition of $r(\Omega)$  (see \eqref{eq:inradius}) is in fact a maximum. Moreover for every ball $B_r(x)\subset \Omega$ with $r=r(\Omega)$ (i.e.\ which attains this maximum) and every line $l$ containing $x$, the set $\partial B_r(x)\cap \partial \Omega$ is not contained in any of the open half planes determined by $l.$
\end{lemma}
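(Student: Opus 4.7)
The plan has two parts. For the \emph{existence of a maximizer}, I would pick a sequence of inscribed balls $B_{r_n}(x_n)\subset\Omega$ with $r_n\nearrow r(\Omega)$. Boundedness of $\Omega$ makes $\{x_n\}$ a bounded sequence, so after passing to a subsequence $x_n\to x_\infty$. A short triangle-inequality argument then shows that every point $y$ with $|y-x_\infty|<r(\Omega)$ lies in $B_{r_n}(x_n)\subset\Omega$ for $n$ large, giving $B_{r(\Omega)}(x_\infty)\subset\Omega$ and so realising the supremum.

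For the \emph{maximality of the contact set}, I would fix a maximal ball $B_r(x_0)\subset\Omega$ and a line $l$ through $x_0$, and argue by contradiction: assume the contact set $K\coloneqq \partial B_r(x_0)\cap\partial\Omega$ is entirely contained in one of the two open half planes $H$ determined by $l$. After a rigid motion I may take $x_0=0$, $l=\{y=0\}$ and $H=\{y>0\}$. The idea is to translate the ball slightly in the direction $v=-e_2$, producing $B_t\coloneqq B_r(tv)$ for $t>0$ small, and to show that $\cl(B_t)$ is strictly interior to $\Omega$; it can then be inflated to a strictly larger ball inside $\Omega$, contradicting the maximality of $r$.

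The crux is showing $\cl(B_t)\cap\partial\Omega=\emptyset$ for $t$ sufficiently small, which I would do by splitting $\partial\Omega$ into a piece near $K$ and its complement. Since $K$ is compact and contained in the open set $H$, fix an open $U$ with $K\subset U\subset H$. Because $B_r(0)$ is open and contained in $\Omega$, one has $\partial\Omega\cap\cl(B_r(0))\subset K$, so $\partial\Omega\setminus U$ is closed and disjoint from the compact $\cl(B_r(0))$, hence separated from it by some $\delta>0$. For $t<\delta$ and $z\in\partial\Omega\setminus U$ one gets $|z-tv|\ge|z|-t\ge r+\delta-t>r$. For $z\in\partial\Omega\cap U$ the inclusion $U\subset H$ forces $z_2>0$, while $|z|\ge r$, whence
\begin{equation*}
|z-tv|^2=|z|^2+2tz_2+t^2>r^2.
\end{equation*}
In either case $z\notin\cl(B_t)$. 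Connectedness of $\cl(B_t)$ together with the fact that $\cl(B_t)$ intersects $\Omega$ for small $t$ then force $\cl(B_t)\subset\Omega$ with strictly positive distance to $\partial\Omega$, so a slightly enlarged concentric ball still fits in $\Omega$, the sought contradiction. The only mild technical point is the uniformity of the estimate over $z\in\partial\Omega$, which is secured by the positive separation $\delta$ coming from compactness versus closedness, and by the strict sign of $z_2$ on the near piece of $\partial\Omega$; beyond this the argument is elementary.
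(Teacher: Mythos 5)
Your proposal is correct and follows essentially the same route as the paper: both perturb the center of the maximal ball away from the half plane containing the contact set, split $\partial\Omega$ into a neighbourhood of the contact set (handled by the inner-product/sign estimate) and its complement (handled by compactness giving a positive separation), and derive a contradiction with maximality. The only cosmetic differences are that the paper gets existence from continuity of the distance-to-boundary function on $\cl(\Omega)$ rather than from a maximizing sequence of balls, and phrases the contradiction as a strict increase of that distance function rather than as inflating a translated ball.
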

\begin{proof}
    Let $f:\cl(\Omega) \to [0,\infty)$ be given by $f(x)\coloneqq \sfd(x,\partial \Omega)$. Then by definition $\sup_{\cl(\Omega)}f=r(\Omega)>0$. Since $f$ is continuous on a compact set, its maximum is attained at some point $x$, which must lie in $\Omega$ (otherwise $f(x)=0$). This proves the first part. Consider now a ball $B_r(x)\subset \Omega$ with $r=r(\Omega)$ and define the compact set $K\coloneqq \partial B_r(x)\cap \partial \Omega.$ Let $l$ be any line containing $x$ and denote by $H$, $H'$ the two open half planes determined by $l$. Suppose by contradiction that $K$ is contained in $H.$ Then also a suitable open neighbourhood $U$ of $K$ is contained in $H$. The set $\partial \Omega \cap U^c$ is closed and  by definition of $K$ it is contained in $ \rr^2\setminus \cl(B_r(x))$, therefore 
    \begin{equation}\label{eq:dist > r}
        \sfd(x,\partial \Omega \cap U^c)>r,
    \end{equation}
      Let now $h$ be the half-line starting from $x$, orthogonal to $l$ and contained in   the closure of $H'$, i.e.\ the half space that  does not contain $K$. Then for every  point $x'\in h$ with $x\neq x'$ it holds 
      \begin{equation}\label{eq:dist > r v2}
          \sfd(x',\partial \Omega \cap U)> \sfd(x,\partial \Omega \cap U)=r(\Omega)
      \end{equation}
       Indeed, since $x'\in H'$, it holds $\la x-y,x'-x\ra\ge 0$ for all $y\in H$ which implies
      \[
      |x'-y|^2=|x-y|^2+|x-x'|^2+2\la x-y, x-x'\ra\ge |x-y|^2+|x-x'|^2, \quad \forall y \in H.
      \]
      Hence \eqref{eq:dist > r v2} holds because $U\subset H.$
      On the other hand by  \eqref{eq:dist > r} and continuity, if $x'$ is sufficiently close to $x$, we have $\sfd(x',\partial \Omega \cap U^c)>r$. Combining this with \eqref{eq:dist > r v2} shows that $\sfd(x',\partial \Omega)>r(\Omega)$ for $x'$ sufficiently close to $x$, which is a contradiction.
\end{proof}

\section{A polygon without inscribed fat  triangles}\label{sec:hourglass}
Here we will construct Jordan curves such that all the triangles inscribed in their interior are not too fat, or in other words   all their angles  are less than some constant smaller than $\frac{\pi}{3}$. In particular they do not admit any inscribed equilateral triangle, which in our notation is the fattest triangle possible. 

In the argument we will repeatedly use the following elementary inequality useful to estimate $\alpha(\T)$ and that we isolate here for convenience.
\begin{lemma}\label{lem:equivalent fatness}
    Let $\T$ be a triangle of vertices $A,B,C\in \rr^2$ and let $CH$ be the height of base $AB$. Then
\begin{equation}\label{eq:ratio bh}
    \alpha(\T)\le \arctan\left(\frac{2\overline{CH}}{\overline{AB}} \right).
\end{equation}
\end{lemma}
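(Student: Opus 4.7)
The plan is to reduce to bounding the minimum of the base angles $\hat A$ and $\hat B$, since $\alpha(\T)\le\min(\hat A,\hat B)$ trivially, and then to compute these tangents using elementary right-triangle trigonometry with the altitude $CH$. Set $h\coloneqq \overline{CH}$ and $b\coloneqq \overline{AB}$. It is natural to place coordinates so that $A$ and $B$ lie on the $x$-axis at distance $b$ and $C$ has height $h$, so that $H$ is the orthogonal projection of $C$ onto the line through $A$ and $B$.

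The main case is when $H$ belongs to the closed segment $AB$. Then both $\hat A$ and $\hat B$ are acute (or right) and one can read off
\[
\tan\hat A=\frac{h}{\overline{AH}},\qquad \tan\hat B=\frac{h}{\overline{BH}}.
\]
Since $\overline{AH}+\overline{BH}=b$, one of the two summands is at least $b/2$; WLOG $\overline{AH}\ge b/2$, and then $\tan\hat A\le 2h/b$, which yields $\alpha(\T)\le\hat A\le\arctan(2h/b)$.

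The remaining case is when $H$ lies outside the segment $AB$, say on the prolongation beyond $A$ (the other subcase is symmetric). Then $\overline{BH}=\overline{AB}+\overline{AH}>b$, so
\[
\tan\hat B=\frac{h}{\overline{BH}}<\frac{h}{b}<\frac{2h}{b},
\]
and once again $\alpha(\T)\le\hat B<\arctan(2h/b)$. Combining the two cases gives \eqref{eq:ratio bh}.

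I do not expect any real obstacle here: the statement is a clean trigonometric inequality and the only subtlety is remembering to treat the case in which the foot of the altitude falls outside $AB$ (so that $\hat A$ or $\hat B$ is obtuse and the simple formula $\tan\hat A=h/\overline{AH}$ no longer applies to that particular angle). That degenerate case is actually easier, not harder, because the other base angle is then strictly smaller than $\arctan(h/b)$.
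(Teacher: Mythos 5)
Your proof is correct and follows essentially the same route as the paper: split on whether the foot $H$ of the altitude lies in the closed segment $AB$ or outside it, and in each case bound an acute base angle by $\arctan(2\overline{CH}/\overline{AB})$ using the right triangle formed with $H$. No gaps.
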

\begin{proof}
If $H$ belongs to the (closed) segment $AB$, then at least one between $\overline{AH}$ and $\overline{BH}$ is greater or equal than $\overline{AB}/2$, since $\overline{AH}+\overline{BH}\ge \overline{AB}$. 
We can assume that $\overline{BH}\ge \overline{AB}/2$. Since $\hat B\le \pi/2$ this gives $\hat B=\arctan(\overline{CH}/\overline{BH})\le \arctan(\frac{2\overline{CH}}{\overline{AB}} ),$ which shows \eqref{eq:ratio bh}. If instead $ H$ does not belong to $AB$ then we can assume that the points $H,A,B$ lies in the extended side of $AB$ in this precise order. In particular  $\overline{BH}\ge \overline{AB}$ and $\hat B\le \pi/2$. This gives $\hat B=\arctan(\overline{CH}/\overline{BH})\le \arctan(\frac{\overline{CH}}{\overline{AB}} ),$ which implies again \eqref{eq:ratio bh}.
\end{proof}

We are now ready to prove Theorem \ref{thm:no fat} that we restate here for the convenience of the reader. 
\begin{theorem}\label{thm:hourglass}
     There exists a closed polygonal curve $\mathcal H$ such that
    \begin{equation}\label{eq:upper bound fat}
         \alpha(\T)\le \arctan(\sqrt 2)
    \end{equation}
     for every triangle $\T $ inscribed in the interior of $\mathcal H.$
\end{theorem}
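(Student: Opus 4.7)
The plan is to construct $\mathcal H$ explicitly and then bound $\alpha(\T)$ by a case analysis combined with Lemma \ref{lem:equivalent fatness}. I would take $\mathcal H$ to be an hourglass-shaped hexagon symmetric under the two coordinate reflections: two wide (trapezoidal) lobes joined across a narrow waist, with the slanted outer edges of the lobes making an angle $\alpha \in (0,\arcsin(\sqrt 2/4))$ with the horizontal. With this description the interior of $\mathcal H$ is contained in a thin horizontal strip, and the width of the waist is controlled by $\sin\alpha$.

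Given an inscribed triangle $\T$ with vertices on $\partial \mathcal H$, Lemma \ref{lem:equivalent fatness} tells me it suffices to produce one side $AB$ of $\T$ such that the height $\overline{CH}$ from the opposite vertex satisfies $2\overline{CH}/\overline{AB}\le \sqrt 2$, since this already yields $\alpha(\T)\le \arctan(\sqrt 2)$. As base I would always pick the longest side of $\T$, which is the optimal choice for minimising $h/b$. The analysis then splits naturally according to whether the three vertices of $\T$ all lie in the same lobe of $\mathcal H$, or are distributed two-and-one between the two lobes.

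In the first case the whole triangle is contained in one lobe, whose own height-to-length ratio is bounded by a quantity of order $\tan\alpha$, so any inscribed triangle inherits a very small $h/b$ and \eqref{eq:upper bound fat} follows immediately. The substantive case is the second one: the containment condition forces at least one side of $\T$ to cross the waist and hence to be a long chord of $\mathcal H$, while all heights from the opposite vertex are capped by the vertical extent of the lobes. I expect the extremiser of $\overline{CH}/\overline{AB}$ over admissible inscribed triangles to be, up to the symmetries of $\mathcal H$, a triangle with two vertices at the outer corners of one lobe and the third at an outer corner of the other. A direct trigonometric computation should then show that for this configuration $2\overline{CH}/\overline{AB}=\sqrt 2$ precisely when $\sin\alpha=\sqrt 2/4$, which gives \eqref{eq:upper bound fat} for every admissible $\alpha$.

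The main obstacle I foresee is exactly this extremiser analysis in the two-and-one case: I need to enumerate which triples of boundary points can simultaneously serve as vertices of an inscribed triangle---that is, the three closed sides must each lie in $\mathcal H$, not just the vertices---and then solve a small optimisation to confirm that the conjectured corner-to-corner configuration really is extremal. The threshold $\arcsin(\sqrt 2/4)$ in the construction is engineered precisely to make the trigonometric identity $2\overline{CH}/\overline{AB}=\sqrt 2$ hold in the limit; the other case and the basic set-up of $\mathcal H$ are by comparison routine.
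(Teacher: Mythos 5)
Your construction and your choice of main tool (Lemma \ref{lem:equivalent fatness}, applied to the longest side) match the paper's, but both branches of your case analysis rest on incorrect claims, and the part you defer is guided by a false extremal picture. The branch you dismiss as immediate is in fact where the extremal triangle lives. A triangle contained in one trapezoidal lobe need not stretch along the full length of that lobe: take the two waist points $A=(-\tfrac12,0)$, $B=(\tfrac12,0)$ together with a point on the far parallel side of one lobe. With the paper's normalisation (waist of width $1$, lobe height $\tfrac{1}{\sqrt2}$) this triangle satisfies $2\overline{CH}/\overline{AB}=\sqrt2$, i.e.\ $\alpha(\T)=\arctan(\sqrt 2)$ exactly — not a quantity of order $\tan\alpha$. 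The lobe's overall aspect ratio is indeed $O(\tan\alpha)$, but an inscribed triangle can sit at the fat end of the trapezoid near the waist, so it does not inherit that ratio; your first case therefore needs the full analysis, not a one-line dismissal.

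In the two-and-one case, your conjectured extremiser — two outer corners of one lobe and one outer corner of the other — is not an inscribed triangle: any side joining an outer corner of the top lobe to an outer corner of the bottom lobe with both at $|x|$ comparable to $L\gg\tfrac12$ (in particular the side joining the two left outer corners $(-L,\pm\tfrac1{\sqrt2})$) crosses the line $y=0$ outside the narrow waist segment and hence leaves $\mathcal H$. Consequently the threshold $\arcsin(\sqrt2/4)$ is not where a corner-to-corner configuration attains $2\overline{CH}/\overline{AB}=\sqrt2$. In the paper it arises from a genuinely different configuration (one vertex in the interior of a slanted side, one on a far horizontal side, one in the opposite half-plane), for which the bound $\arctan(4\sin\alpha)$ is derived and one needs $4\sin\alpha\le\sqrt2$; the constant $\sqrt2$ itself comes from balancing the waist-width against the lobe-height between two configurations involving the waist points, which is why the lobe height is taken to be $\tfrac1{\sqrt2}$. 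Before the deferred optimisation can be carried out, the guiding extremal picture has to be replaced, and the enumeration of admissible vertex triples (including those with vertices at $A$, $B$ or in the interiors of the slanted sides) must actually be done.
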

\begin{proof}
The hourglass-shaped polygon $\mathcal H$ is constructed as follows. Fix any angle $\alpha$ small enough, in particular
$$\alpha\le \min(\arctan(\sqrt 2), \arcsin(\sqrt 2/4), \arctan (1/\sqrt 2) )=\arcsin(\sqrt 2/4)$$
will suffice.
Then  $\mathcal H$ is defined as the  six-sided polygon of vertices $A,B,C,D,E,F$:
\[
\begin{split}
    &A=\left(-\frac12,0\right),\quad  B=\left(\frac12,0\right),\\
    C=\left(-L,-\frac1{\sqrt 2}\right),\quad &D=\left(L,-\frac1{\sqrt 2}\right),\quad  E=\left(-L,\frac1{\sqrt 2}\right),\quad F=\left(L,\frac1{\sqrt 2}\right).
\end{split}
\]
  where $L>0$ satisfies
\[
\hat C=\hat D=\hat E=\hat F=\arctan\left(\frac{\sqrt 2}{L-\frac12}\right)=\alpha
\]
(see Figure \ref{fig:hourglass}). The  constant $\frac{1}{\sqrt 2}$ above is for optimization reasons, in particular with our construction this value gives the best upper bound in \eqref{eq:upper bound fat} (see also Remark \ref{rmk:optimized}).

\begin{figure}[!ht]
\centering
\includegraphics[width=12cm, height=4cm]{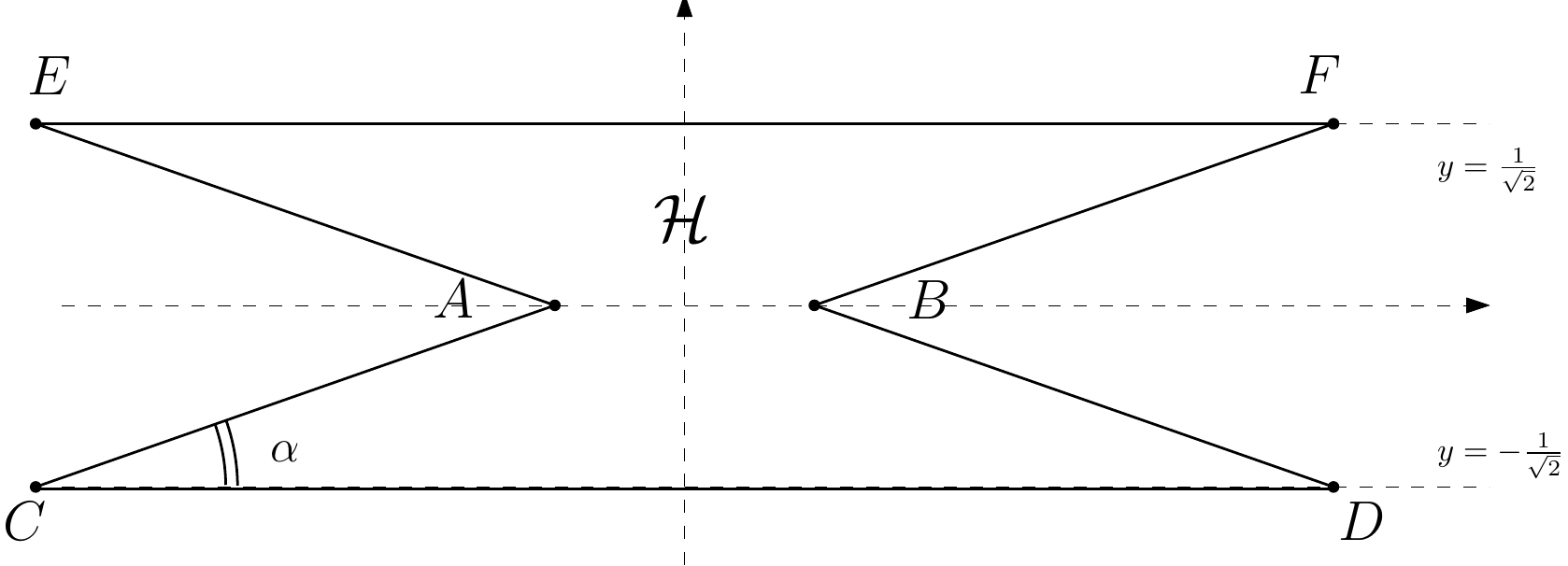}
\caption{The construction of $\mathcal H$}
\label{fig:hourglass}
\end{figure}

We need to show that every triangle $\T$  inscribed in the interior of $\mathcal H$ satisfies \eqref{eq:no fat}.
Recall that if $\T$ is inscribed in the interior of $\mathcal H$, then it intersects the boundary of $\mathcal H$ only at the vertices and \emph{not} in the interior of its sides. In particular:
\begin{equation}\label{eq:one vertex}
    \parbox{13cm}{\centering each side of $\mathcal H$ contains at most one vertex of $\T$. }
\end{equation}
We stress that here by side we mean \emph{closed side}, i.e.\ containing also the vertices, in particular the sides of $\mathcal H$ are not pairwise disjoint.

Let $\T$ be a triangle inscribed in  the interior of $\mathcal{ H}$  and denote by $P_i=(x_i,y_i)$, $i=1,2,3,$ the vertices of $\T.$ For convenience we will call the (closed) sides $EF$ and $CD$ the \emph{horizontal sides} of $\mathcal H$ and  \emph{non-horizontal sides} the remaining ones.

We prove \eqref{eq:upper bound fat} by distinguishing several cases (see Figure \ref{fig:cases}).
\begin{figure}[!ht]
\centering
\includegraphics[width=14cm, height=15cm]{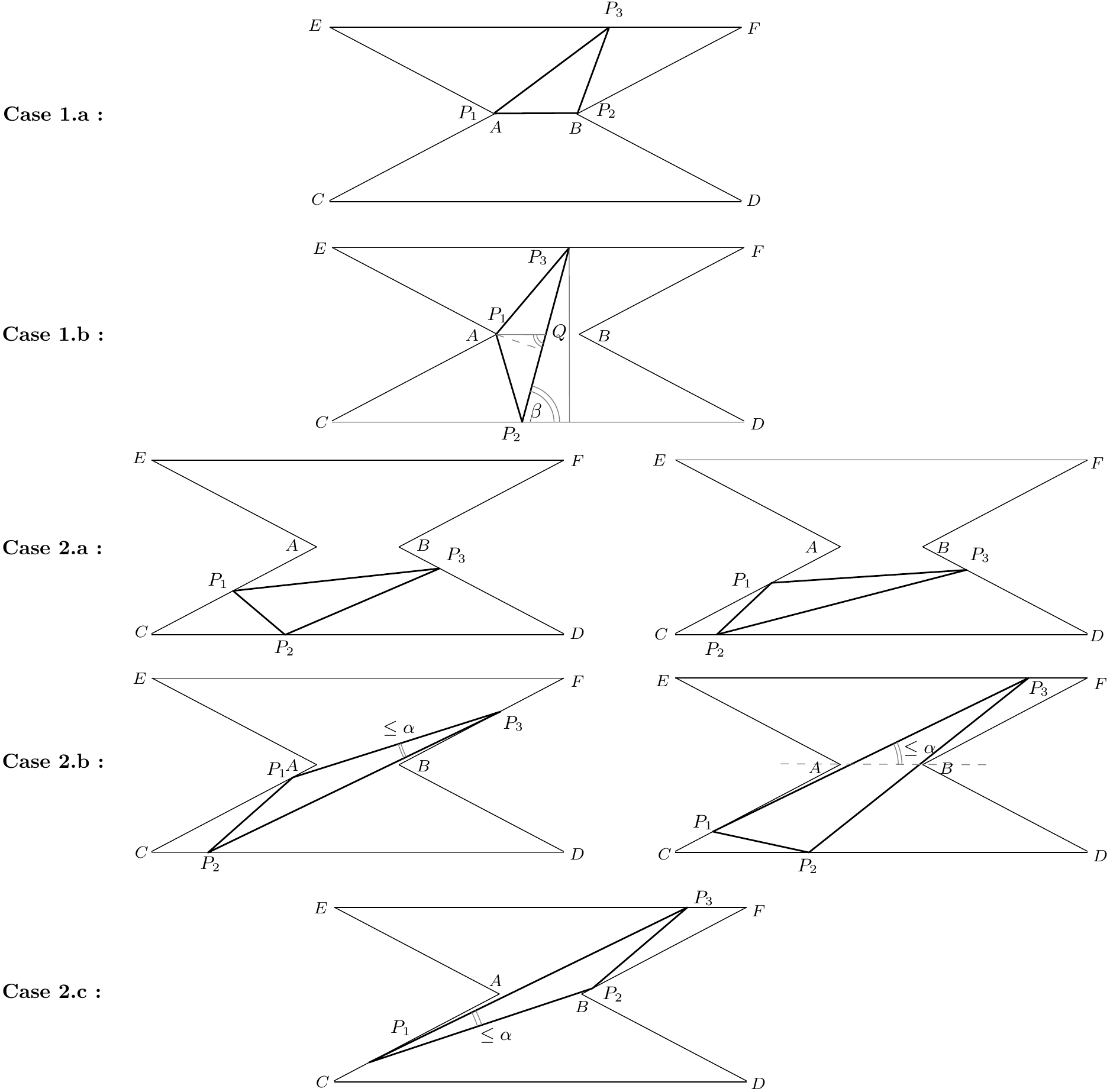}
\caption{The different cases in the proof of Theorem \ref{thm:hourglass}}
\label{fig:cases}
\end{figure}

\noindent{\textbf{Case 1}:} None of $P_1,P_3,P_3$ lies in $\{-1/\sqrt{2}<y<0\}\cup \{0<y<1/\sqrt{2}\}$, i.e.\ none of them lies in the interior of one of the non-horizontal sides. 

There are two possibilities, up to renaming the points: $P_1=A,P_2=B$ and $P_3$ lies in one of the horizontal sides; $P_1=A$ (or $P_1=B$) and $P_2,P_3$ both lie in (different) horizontal sides. 

\medskip

\noindent{\textbf{Case 1.a}:}  $P_1=A,P_2=B$ and $P_3$ lies in one of the horizontal sides. We have $\overline{P_1P_2}=1$ and that the height of base $P_1P_2$ measures exactly $1/\sqrt 2.$ Therefore by Lemma \ref{lem:equivalent fatness}
\[
\alpha(\T)\le \arctan(2\sqrt{2}^{-1})=\arctan(\sqrt{2}).
\]

\medskip

\noindent{\textbf{Case 1.b}:} $P_1=A$  and $P_2,P_3$ both lie in (different) horizontal sides. 
Call $Q$ the intersection between $P_2P_3$ and the horizontal axis $\{y=0\}$. The key point is that $\overline{P_1Q}\le 1$, otherwise $Q$ must lie outside $\mathcal H$ and the triangle $\T$ would not be inscribed in the interior of $\mathcal H$. Denote by $\beta$ the (smaller)  angle that the segment $P_2P_3$ forms with the horizontal axis. Then  $\overline{P_2P_3}=\sqrt 2 \sin(\beta)^{-1}$ and the height of base $P_2P_3$ measures $h=\overline{P_1Q}\sin(\beta)\le \sin(\beta)$. Therefore by Lemma \ref{lem:equivalent fatness}
\[
\alpha(\T)\le \arctan(2h/\overline{P_2P_3})\le \arctan\left (\frac{2\sin(\beta)^2}{\sqrt 2}\right )\le  \arctan(\sqrt 2).
\]

\medskip

\noindent{\textbf{Case 2}:} At least one of $P_1,P_3,P_3$ lie in $\{-1/\sqrt{2}<y<0\}\cup \{0<y<1/\sqrt{2}\}$, i.e.\ up to symmetries and relabelling:
\begin{equation}
    \text{$P_1\in AC\setminus\{A,C\}$.}
\end{equation}

\medskip

\noindent{\textbf{Case 2.a}:} $P_2,P_3 \in \{y\le 0\}$.

Again up to symmetries and relabelling, since each side of $\mathcal H$ contains at most one of $P_1,P_2,P_3$, we can assume that $P_1\in AC$, $P_2\in CD$, $P_3 \in DB$. We have that $\overline{P_1P_3}\ge \overline{AB}=1$. If $x_1\le x_2\le x_3$ (recall that $P_i=(x_i,y_i)$), then the height of base $P_1P_3$  has length  less  than $\frac{1}{\sqrt 2}.$ If instead $x_2\le x_1\le x_3$, then $\overline{P_2P_3}\ge 1$ and the height of base $P_2P_3$    has length  less  than $\frac{1}{\sqrt 2}.$ Symmetrically, if instead $ x_1\le x_3\le x_2$, then $\overline{P_2P_1}\ge 1$ and the height of base $P_2P_1$    has length  less  than $\frac{1}{\sqrt 2}.$ In all cases $\T$ has a height $h$ over a base $b$ such that
$h/b\le\frac{1}{\sqrt 2}.$ Therefore by Lemma \ref{lem:equivalent fatness}
\[
\alpha(\T)\le \arctan(\sqrt 2).
\]

\medskip

\noindent{\textbf{Case 2.b}:}  $P_2\in \{y\le 0\}$, $P_3\in \{y> 0\}$ (or equivalently $P_3\in \{y\le 0\}$, $P_2\in \{y>0\}$).

$P_3$ must lie either in $BF$ or $EF$, otherwise if $P_3$ was on $EA$ then $P_1P_3$ would intersect the exterior of $\mathcal H$ and $\T$ would not be inscribed in $\mathcal H$. If $P_3 \in BF$, then $\hat{P_3}\le \alpha\le  \arctan(\sqrt{2})$. So we assume $P_3 \in EF$. Note that, since $P_1\neq A$, the segment $P_1P_3$  forms an angle not bigger than $\alpha $ with the horizontal axis (otherwise  $P_1P_3$ would intersect the exterior of $\mathcal H$). Therefore  $\overline{P_1P_3}\ge |y_1-y_3|\sin(\alpha)^{-1}\ge \frac1{\sqrt 2\sin(\alpha)}.$ Similarly, since the segment $P_1P_3$ intersects the horizontal axis $\{y=0\}$ at some $x\ge -1/2$ we have $ x_3\ge  \frac1{\sqrt 2\tan(\alpha)}-\frac12$. Hence, since  $\tan(\alpha)<(1/\sqrt 2)$,  it must be that $x_3>1/2$. This implies that $x_2\le x_3$, otherwise $P_2P_3$ would intersect the horizontal  axis $\{y=0\}$ at some $x>1/2$ (and thus outside $\mathcal H$) and $\T$ would not be inscribed in $\mathcal H$. Therefore the vertical line through $P_2$ intersects the line containing $P_1P_3$ in $\{y\le \frac{1}{\sqrt 2}\}$ and so the height of base $P_1P_3$ is less than $2\cdot \frac1{\sqrt 2}=\sqrt{2}.$ Applying Lemma \ref{lem:equivalent fatness} and using the above lower bound for $\overline{P_1P_3}$ we get
\[
\alpha(\T)\le \arctan\left (4\sin(\alpha)\right)\le \arctan(\sqrt 2),
\]
because of the assumption $\alpha\le \arcsin(\sqrt 2/4)$.

\medskip

\noindent{\textbf{Case 2.c}:}  $P_2,P_3\in \{y>0\}$.

We must have that $\widehat P_1\le \alpha \le \arctan(\sqrt 2)$ (as in the first part of \textbf{Case 2.b}).
\end{proof}

\begin{remark}\label{rmk:optimized}
    The choice of the parameters in the construction of $\mathcal H$  in the proof of Theorem \ref{thm:hourglass} is optimal  in the following sense that. Fixed the points $A,B$ at unit distant and choosing the points $E,F$ and $CD$ respectively on the horizontal lines $\{y=a\}$ and $\{y=-a\}$ (as in Figure \ref{fig:hourglass}), the best upper bound in \eqref{eq:upper bound fat} is obtained with $a=\frac{1}{\sqrt 2}.$
\end{remark}

As anticipated in the Introduction (see Remark \ref{rmk:stronger}) if we consider more general Jordan curves than polygonal ones, we can prove a  stronger version of Theorem \ref{thm:hourglass} where the sides of the inscribed triangles, and not only the vertices, are allowed to touch the boundary. More precisely we introduce:
\begin{definition}[Weakly inscribed triangles]\label{def:w}
    A triangle $\T$ is \emph{weakly inscribed} in an open set $\Omega$ if the  interior of $\T$ is contained in $\Omega$ and if all the vertices of $\T$ lie in $\partial \Omega.$
\end{definition}
For example any non-degenerate triangle $\T$ is weakly inscribed, but \emph{not inscribed}, in its own interior.

As a consequence of Theorem \ref{thm:hourglass} we can obtain the following.
\begin{cor}\label{cor:hourglass}
     For every $\eps>0$, there exists a Jordan curve $\mathcal H_{\eps}$ such that 
   \begin{equation}\label{eq:upper bound fat strong}
        \alpha(\T)\le \arctan(\sqrt 2)+\eps,
   \end{equation}
   for every triangle $\T $  weakly  inscribed in the interior of $\mathcal H_{\eps}$.
\end{cor}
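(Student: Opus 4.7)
Fix $\eps>0$ and consider the hourglass polygon $\mathcal H$ of Theorem~\ref{thm:hourglass}. The plan is to construct $\mathcal H_\eps$ by replacing each of the six sides of $\mathcal H$ with a strictly convex arc bulging perpendicularly into the interior of $\mathcal H$ by a maximal depth $\delta=\delta(\eps)>0$, to be chosen small. The six polygon vertices $A,B,C,D,E,F$ remain endpoints of the arcs and thus lie on the curve, so $\mathcal H_\eps$ is still a Jordan curve, with interior $\Omega_\eps \subsetneq \Int(\mathcal H)$.

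The first step is to show that every triangle $\T$ weakly inscribed in $\Omega_\eps$ is in fact inscribed, and moreover its three vertices lie on three distinct closed arcs of $\mathcal H_\eps$. Strict convexity of each arc is used in two ways. On one hand, any chord between two points of a common closed strictly convex arc lies strictly on the exterior side of that arc, hence in $\rr^2\setminus\cl(\Omega_\eps)$; were two vertices of $\T$ on the same closed arc, a thin neighborhood of that chord inside $\T$ would be forced outside $\Omega_\eps$, contradicting the weakly inscribed hypothesis. On the other hand, at any interior point of a strictly convex arc the arc lies strictly on one side of its tangent, so a side of $\T$ touching an arc at an interior point would analogously produce triangle-interior points outside $\Omega_\eps$.

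I would then obtain the angle bound \eqref{eq:upper bound fat strong} by revisiting the case analysis in the proof of Theorem~\ref{thm:hourglass} with the six arcs of $\mathcal H_\eps$ in place of the six sides of $\mathcal H$. Since each arc is at Hausdorff distance at most $\delta$ from the corresponding polygon side, and the tangent of each arc at every point differs from that side's direction by $O(\delta/L)$, all the geometric quantities exploited in each case---the neck width $\overline{AB}=1$, the vertical extent $1/\sqrt 2$ of each half of the hourglass, the fact that $P_2P_3$ in Case~1.b meets the $x$-axis inside $[-1/2,1/2]$, the distance bound $\overline{P_1P_3}\ge 1$ in Case~2.a, and the local angle estimate at a vertex lying on a non-horizontal arc in Cases~2.b--2.c---are preserved up to an additive error $O(\delta)$. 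Each case thus concludes with $\alpha(\T)\le \arctan(\sqrt 2)+C\delta$ for a universal constant $C=C(\mathcal H)$, and taking $\delta\le \eps/C$ delivers \eqref{eq:upper bound fat strong}.

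The main obstacle is the combinatorial bookkeeping in this perturbation argument, most delicately at the reflex vertices $A$ and $B$ of $\mathcal H$: the distinct-closed-arcs property from the first step is exactly what prevents a triangle from having one vertex at, say, $A$ and another vertex on one of the two adjacent arcs replacing $EA$ or $AC$. This is crucial because, without the bumping, such configurations do produce triangles weakly inscribed in $\Int(\mathcal H)$ with $\alpha$ close to $60^\circ$ (much larger than $\arctan(\sqrt 2)$), so the strict convexity of the arcs is essential for ruling them out in $\Omega_\eps$.
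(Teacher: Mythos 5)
Your construction of $\mathcal H_\eps$ is exactly the paper's (each side of the hourglass $\mathcal H$ replaced by an inward-facing circular arc of small height), and your use of strict convexity to force the three vertices of $\T$ onto three distinct arcs is the same key observation; as you note, this is precisely what kills the troublesome configurations at the reflex vertices $A,B$. Where you diverge is in how the angle bound is then extracted. The paper does not re-run the case analysis quantitatively: it argues by compactness, taking bump heights $\eps_n\to 0$, extracting a convergent sequence of putative counterexample triangles $\T_n$, and showing that the limit triangle either is inscribed in the polygon $\mathcal H$ (so Theorem \ref{thm:hourglass} applies directly) or has two vertices on one closed side of $\mathcal H$ with one of them a polygon vertex, a configuration excluded for large $n$ by the geometry of the arcs near $A$ and $B$. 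Your direct $O(\delta)$ perturbation of Cases 1.a--2.c would, if carried out, yield an explicit dependence $\delta(\eps)$, which the compactness route does not; the price is that the stability of each case must actually be verified, and the delicate ones are 2.b--2.c, where the estimates rest on confining segments through the neck and on the polygon angle $\alpha$ --- fortunately $\alpha$ was fixed with strict slack below $\arctan(\sqrt 2)$, so there is room. As written, this step is asserted rather than proved, so your proposal is a credible outline rather than a complete argument.

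One sub-claim in your first step is false: it is not true that every triangle weakly inscribed in the interior of $\mathcal H_\eps$ is inscribed. A side of $\T$ can be tangent to one of the arcs at an interior point $p$ with $\T$ lying on the domain side of the tangent line; then that side meets the boundary at the non-vertex point $p$ while the interior of $\T$ stays inside (for instance, a horizontal side through the lowest point of the arc replacing $EF$). Your tangency argument only excludes the configuration in which the interior of $\T$ lies on the side of the tangent containing the arc. This does not damage the remainder of your proof, because what you actually use downstream is the distinct-arcs property together with the inclusion of $\T$ in the closure of the domain, both of which you do establish; but the reduction should be stated as ``each closed arc carries at most one vertex,'' not as ``weakly inscribed implies inscribed.''
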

\begin{proof}
The idea of the construction of $\mathcal H_{\eps}$ is to  modify the polygon $\mathcal H$ given in Figure \ref{fig:hourglass} by slightly ``bumping'' the sides inwards, i.e.\ by replacing them with inward-facing circular arcs that are close enough to the original sides.
Concretely, for every $\eps>0$ we construct the curve $\mathcal H_{\eps}$ as follows: replace each side of $\mathcal H$ by a circular arc of height $\eps$ having as chord  that side of $\mathcal H$.  In particular $\mathcal H_{\eps}$ is the union of  \emph{ six inward-facing circular arcs} (see Figure \ref{fig:hourglass2}). From now on we call these circular arcs (together with their endpoints) the \emph{sides} of $\mathcal H_{\eps}.$ 
\begin{figure}[!ht]
\centering
\includegraphics[width=8cm, height=3cm]{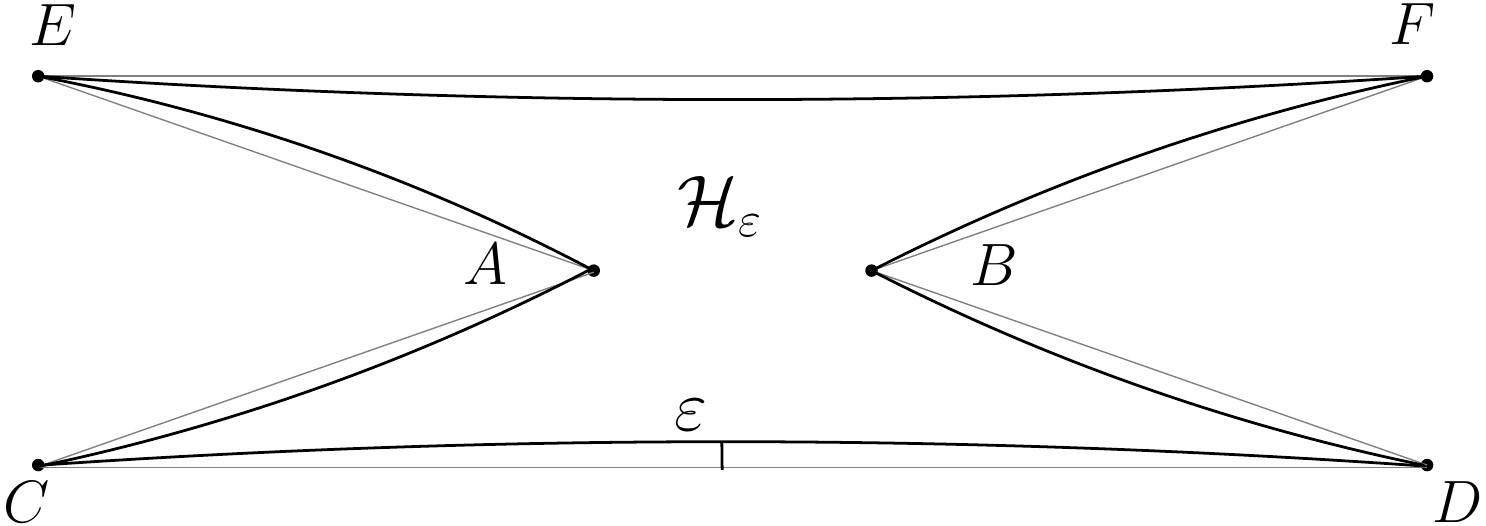}
\caption{The construction of $\mathcal H_{\eps}$}
\label{fig:hourglass2}
\end{figure}

We prove the result arguing  by contradiction. Suppose that the statement is false for some $\eps>0.$ Fix a sequence $\eps_n\to 0$ with $\eps_n>0.$ Then for every $n$ there must be  triangle $\T_n$ inscribed in $\mathcal H_{\eps_n}$ such that $\alpha(\T_n)>\sqrt{2}+\eps.$ Since   the sides of $\mathcal H_{\eps_n}$ are strictly convex, each of them contains at most one vertex of $\T_n$. In particular there exists some $\delta>0$, independent of $n$, such that at least one of the sides of $\T_n$ is bigger than $\delta>0.$ Hence by compactness, up to passing to a subsequence, the vertices of $\T_n$ converge to three vertices of a  triangle $\T$, which is not a single point, and so must satisfy $\alpha (\T)\ge \arctan(\sqrt 2)+\eps$.  Since the vertices of $\T_n$ belong to the sides of $\mathcal H_{\eps_n}$, the vertices of $\T$ belong to sides of $\mathcal H.$ Moreover since the interior of $\mathcal H_{\eps_n}$ is contained in the interior of $\mathcal H$, the interior of $\T$ is contained in the interior of $\mathcal H$.  Finally, recalling that the each of the side of $\mathcal H_{\eps_n}$  contains at most one vertex of $\T_n$, we deduce that no couples of vertices of $\T$ can lie in the interior of the same side of $\mathcal H$ (i.e.\ the side minus the endpoints), otherwise for $n$ big enough this would imply that $\T_n$ has two vertices on the same side of $\mathcal H_{\eps_n}$. Therefore there are two possibilities: either each side of $\mathcal H$ contains at most one vertex of $\T$; or two of the vertices of $\T$ (that we call $P,Q$) belong to the same side of $\mathcal H$, but at least one of them (say $P$) coincides with one of its  endpoints. In first case  $\T$ is inscribed in $\mathcal H,$ hence  Theorem \ref{thm:hourglass} implies that $\alpha(\T)\le \arctan(\sqrt 2)$, which gives a contradiction. In the second case $P$ is an endpoint, i.e.\ $P \in \{A,B,C,D,E,F\}$, and $Q$ belong to a side adjacent to $P$. If $P\in \{C,D,E,F\}$ then clearly $\alpha (\T)\le \alpha\le \arctan(\sqrt 2),$ which is a contradiction. Hence (up to symmetry) we can suppose that $P=A$ and  that $Q$ belong to $ AC$. If $Q\in \{A,C\}$  then as before $\alpha (\T)\le \alpha\le \arctan(\sqrt 2),$ hence we can assume that $Q$ is in the interior of $AC.$ By construction there exist $P_n, Q_n$, vertices of $\T_n$, such that $P_n\to P$ and $Q_n \to Q.$ However, since $P_n,Q_n$ do not lie both in the same side of  $\mathcal H_{\eps_n}$ we must have that (for $n$ big enough) $Q_n$ lies in the interior of the arc $AC$ (in $\mathcal H_{\eps_n}$) and $P_n$ lies in the interior of the arc $AE.$ However, by the construction of $\mathcal H_{\eps_n}$ (see Figure \ref{fig:hourglass2}), this clearly implies that the segment $P_nQ_n$ intersects the exterior of $\mathcal H_{\eps_n}$, which contradicts the fact that $\T_n$ is inscribed in $\mathcal H_{\eps_n}$,
\end{proof}

We end this part with some open questions concerning possible improvements on Theorem \ref{thm:hourglass} and Corollary \ref{cor:hourglass}.
\begin{open}
    Is it possible to take $\mathcal H$ in Theorem \ref{thm:hourglass} to be a polygon with at most five sides?  (In our case it has  six).
\end{open}
\begin{open}
    Is it possible to take $\mathcal H_{\eps}$ in Corollary \ref{cor:hourglass} to be smooth curve, e.g.\ $C^\infty$ or $C^2$?  
\end{open}

\section{Existence of fat inscribed triangles in arbitrary domains}\label{sec:main}
This section is devoted to the proof of Theorem \ref{thm:main}. First  we describe a rough outline of the argument. 
Recall that the ultimate goal is to find a sufficiently fat triangle inscribed in any bounded open set $\Omega$. The main obstacles to do so are open sets which are very long and thin. The first easier step of the proof will be then to find fat triangles inscribed in open sets that are not too thin. More precisely we show that either there is an inscribed fat triangle or the open set $\Omega$ contains a \emph{very long rectangle} (see Proposition \ref{prop:fat or rectangle}). Long here means that the ratio between the short and the long side is very large and the shorter side is comparable to $r(\Omega)$ (the maximal inscribed radius). The second and more involved step will be to prove that, given the presence of a long rectangle, either we can find a fat triangle or $\Omega$ contains {\emph{another fatter long rectangle}, i.e.\ having the short side  slightly longer  than the previous rectangle (see Proposition \ref{prop:cases}). Since these rectangles can not get fatter and fatter, iterating this statement we will eventually yield the presence of an inscribed fat triangle.

We start by giving the statements of the two main propositions that we just described, from which the main theorem will easily follow. Recall that $r(\Omega)$ denotes the maximal inscribed radius in $\Omega$ (see Def.\ \ref{def:r}).
\begin{prop}[Fat triangle or long rectangle]\label{prop:fat or rectangle}
Let $C_1\ge 1$ be any constant. Let $\Omega$ be any bounded open set with $r(\Omega)=1$. Then at least one of the following holds:
 \begin{enumerate}
     \item[i)] there exists a triangle $\T$ inscribed in $\Omega$ with $\alpha(\T)\ge  \frac14 \arcsin\big(\frac{1}{3C_1}\big)$ and  of area $A(\T)\ge \sin^2\left(\frac14\arcsin( \frac{1}{3C_1})\right),$
     \item[ii)]  there exists a (closed) rectangle $\mathcal R\subset \Omega$ of sides-length $2a\times 2A$ with $A\ge C_1$ and $a=2/3.$
 \end{enumerate}
\end{prop}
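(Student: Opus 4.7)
The plan is to apply Lemma \ref{lem:inscribed ball}: after translating we may assume $B_1(0)\subset\Omega$ and that the compact set $S:=\partial B_1(0)\cap\partial\Omega$ contains the origin in its closed convex hull. The key starting observation is that every triple of points $P_1,P_2,P_3\in S$ whose convex hull contains $0$ spans a non-degenerate triangle inscribed in $\Omega$: its vertices lie in $\partial\Omega$ and its interior (and the interiors of its sides) lie strictly inside the open disk $B_1(0)\subset\Omega$, since every strict convex combination of three distinct points of $\partial B_1(0)$ has norm strictly less than $1$. By the inscribed angle theorem, the minimum interior angle of such a triangle equals half the smallest of the three arcs of $\partial B_1(0)$ cut off by the vertices, and its area admits the lower bound $2\sin(a_1/2)\sin(a_2/2)\sin(a_3/2)$. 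Setting $\beta := \tfrac14 \arcsin(1/(3C_1))$, Carath\'eodory's theorem gives three points $P_1, P_2, P_3\in S$ with $0$ in their convex hull; if the corresponding triangle has all three arcs $\ge 2\beta$, then $\alpha(\T)\ge\beta$ and its area is at least $\sin(2\beta)\ge\sin^2\beta$, handling case (i). Otherwise, up to relabelling and rotation, $P_1, P_2$ lie within angular distance $2\beta$ of $(1,0)$, and the constraint that all three arcs be $\le\pi$ forces $P_3$ to lie within angular distance $\beta$ of $(-1,0)$. If $S$ contains a further point $Q = (\cos\phi, \sin\phi)$ with $|\sin\phi|\ge\sin(2\beta)$, then one of the triples $\{P_1, P_3, Q\}$ or $\{P_2, P_3, Q\}$ has all arcs $\ge 2\beta$ and settles case (i); otherwise $S$ is confined to the two arcs of $\partial B_1(0)$ within angular distance $2\beta$ of $(\pm 1, 0)$.

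Since $2\beta \le \tfrac12 \arcsin(1/3) < \arccos(2/3)$ for every $C_1\ge 1$, in this clustered case the whole arc $\partial B_1(0) \cap \{|x|\le 2/3\}$ lies in $\Omega\setminus\partial\Omega$, and in particular the closed rectangle $[-2/3,2/3]\times[-\sqrt5/3,\sqrt5/3]$ (whose four corners sit on $\partial B_1(0)$ outside the two clusters) is contained in $\Omega$. I would then enlarge the height of this base rectangle: set
\[
A^* := \sup\Big\{A\ge\tfrac{\sqrt 5}{3}:\exists\, c\in\rr\text{ with }[-\tfrac23,\tfrac23]\times[c-A,c+A]\subset\Omega\Big\}.
\]
If $A^*\ge C_1$ this is the rectangle required by case (ii). Otherwise $A^*<C_1$, and a compactness argument yields a boundary point $Q^+=(x_1,y_1)\in\partial\Omega$ on the limit of the boundary of a maximal rectangle, with $x_1\in[-2/3,2/3]$ and $y_1\in[\sqrt 5/3, 2C_1]$.

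The candidate inscribed triangle is then $PP'Q^+$, where $P, P'\in S$ are chosen near $(\pm 1, 0)$. Its base angles $\arctan(y_1/(1\mp x_1))$ are at least $\arctan(\sqrt5/5)$, and its apex angle satisfies
\[
\angle Q^+ = \arctan\!\Big(\tfrac{1-x_1}{y_1}\Big) + \arctan\!\Big(\tfrac{1+x_1}{y_1}\Big) \ge 2\arctan\!\Big(\tfrac1{3C_1}\Big),
\]
so an elementary comparison between $2\arctan(t)$ and $\tfrac14\arcsin(t)$ for $t\in(0,1/3]$ gives $\alpha(\T)\ge\beta$; the triangle has area $y_1\ge\sqrt 5/3$, far larger than $\sin^2\beta$.

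The main technical obstacle I anticipate is verifying that this candidate triangle is actually inscribed in $\Omega$, i.e.\ that its two sides $PQ^+$ and $P'Q^+$ remain in $\Omega$. When $y_1\le\sqrt 5$ these sides are contained in $B_1(0)\cup([-2/3,2/3]\times[0,y_1])\subset\Omega$, and the aspect ratio $a=2/3$ (with corresponding chord height $\sqrt 5/3$) is chosen precisely so that these two regions overlap throughout this range. For $y_1>\sqrt 5$ (possible when $C_1>\sqrt 5$) this inclusion fails, and the construction must be modified, for instance by replacing one of the poles with a boundary point closer to the rectangle, or by working with a triangle based on both the upper and lower blockers of the rectangle together with a suitable $R\in S$. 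Pinning down this geometric inclusion across all regimes of $y_1$ is where the bulk of the technical work lies.
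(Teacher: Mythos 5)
Your first half (maximal inscribed ball, the set $S=\partial B_1(0)\cap\partial\Omega$, and the dichotomy ``three well-separated points of $S$ give a fat triangle by the inscribed angle theorem, otherwise $S$ clusters near two antipodal points and a rectangle of half-width $2/3$ sits inside $\Omega$'') is essentially the paper's \textsc{Case 1}/\textsc{Case 2} split, reached via Carath\'eodory instead of the half-plane property of Lemma \ref{lem:inscribed ball}; the arc bookkeeping in your clustering step is imprecise but repairable. The genuine gap is exactly where you flag it: in the short-rectangle regime you must produce a third vertex $Q^+\in\partial\Omega$ such that the two sides $PQ^+$, $P'Q^+$ avoid $\partial\Omega$, and your proposal does not close this. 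Worse, the specific inclusion you offer for $y_1\le\sqrt5$ is false: the segment from $P\approx(1,0)$ to $Q^+=(x_1,y_1)$ leaves $\cl(B_1(0))$ at height at most $\sqrt5(1-x_1)/3$ along the line $x=2/3$, so already for $x_1$ near $2/3$ and $y_1$ slightly above $\sqrt5/3$ the segment passes through the region $\{2/3<x<1\}\setminus \cl(B_1(0))$, which is contained in neither the ball nor the strip, and about which you have no information. Choosing $Q^+$ as a blocker of a \emph{maximal} rectangle gives no control over $\partial\Omega$ in that intermediate region, so inscribedness genuinely fails for this candidate.

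The paper's resolution is a different mechanism that you would need to import or reinvent: rather than fixing the apex in advance, it sweeps the pair of segments $PR_t$, $QR_t$ (with $R_t=(t,R_y)$ moving horizontally toward the blocked short side of the rectangle) and defines $t_0$ as the supremum of $t$ for which both open segments miss $\partial\Omega$; the apex $S$ is then taken to be the point of $K=(S^1_{t_0}\cup S^2_{t_0})\cap\partial\Omega\setminus\{P,Q\}$ at \emph{minimal} distance from $\{P,Q\}$, which forces the two sides $PS$, $QS$ to be clean by construction. The price is that one no longer knows where $S$ is, so the angle bounds cannot be read off from coordinates as in your $\arctan(y_1/(1\mp x_1))$ computation; instead the paper uses the tangent-line estimates \eqref{eq:safe segments q}--\eqref{eq:safe segments p} (any segment from $P$ or $Q$ into the rectangle re-crosses $\partial B_1(0)$ at arc distance $\ge\delta_1$) to bound $\widehat P,\widehat Q$ from below and $\widehat P+\widehat Q$ from above, hence $\widehat S$ from below. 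Without this (or an equivalent device), your proof does not go through.
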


\begin{prop}[Fat triangle or fatter rectangle]\label{prop:cases}
		Let $\Omega\subset \rr^2$ be open and bounded with $r(\Omega)=1$. Suppose there exists a closed rectangle ${\mathcal{R}}\subset \Omega$ of sides $2A\times 2a$, with $A\ge 13$, $a\in [2/3,1]$.  Then at least one of the following holds:
 \begin{enumerate}
     \item[i)] there exists a triangle $\T$ inscribed in $\Omega$, with $\alpha(\T)\ge \arctan\left(\frac{1}{47}\right)$ and of area
     $A(\T)\ge  \frac{9}{4420},$
     \item[ii)]  there exists a (closed) rectangle  $\tilde {\mathcal{R}}\subset \Omega$ of sides-length $2\tilde A\times 2\tilde a$, with $\tilde a \le 1$ and 
		\begin{equation}\label{eq:Acontrol}
			\begin{split}
				& \tilde A- A\ge 2{\sqrt 2}(a-\tilde a),\quad \frac{1}{3\diam(\Omega)} \le 2(\tilde a-a).
			\end{split}
		\end{equation}
 \end{enumerate}
\end{prop}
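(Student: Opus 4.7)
Place coordinates so that $\mathcal R = [-A, A] \times [-a, a] \subseteq \Omega$, and set $\delta_0 \coloneqq \frac{1}{6\,\diam(\Omega)}$. For each $x \in [-A, A]$ I consider the upward and downward vertical clearances of $\Omega$ above and below $\mathcal R$,
\[
h^+(x) \coloneqq \sup\{t > 0 : \{x\} \times [a, a+t] \subseteq \Omega\}, \qquad h^-(x) \coloneqq \sup\{t > 0 : \{x\} \times [-a-t, -a] \subseteq \Omega\},
\]
which are positive, finite, and upper semicontinuous on $[-A, A]$. I first try to produce case (ii) directly: if there exist $\delta \in [\delta_0, 1-a]$ and an interval $I \subseteq [-A,A]$ of length $2\tilde A \ge 2A - 4\sqrt{2}\,\delta$ on which $h^+(x) \ge 2\delta$ everywhere, then $\tilde{\mathcal R} \coloneqq I \times [-a, a + 2\delta]$ is a rectangle inside $\Omega$ whose half-height $\tilde a = a+\delta$ and half-width $\tilde A$ satisfy, by direct check, the three conditions of (ii). The same attempt is carried out downward using $h^-$; if either succeeds, the proof is complete.

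If both vertical attempts fail at the critical value $\delta = \delta_0$, then neither of the ``bad sets'' $S^\pm \coloneqq \{x\in[-A,A] : h^\pm(x) < 2\delta_0\}$ can be covered by a pair of end-intervals of total length $\le 4\sqrt{2}\,\delta_0$; in particular, each $S^\pm$ contains a point lying in the central subinterval $[-A + 2\sqrt{2}\,\delta_0,\; A - 2\sqrt{2}\,\delta_0]$. Picking such points $x^\pm$ yields two boundary points
$P^\pm \coloneqq (x^\pm, \pm(a + h^\pm(x^\pm))) \in \partial \Omega$
at vertical distance less than $2\delta_0$ from $\mathcal R$ and well inside its horizontal extent. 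To produce the triangle of case (i), I pair these with a third boundary point $Q \in \partial \Omega$, chosen either by pushing horizontally out of $\mathcal R$ from the origin (stopping at the first point outside $\Omega$, which exists by boundedness of $\Omega$ and lies at distance $\ge A \ge 13$) or by exploiting the maximal inscribed ball of radius $r(\Omega)=1$ through Lemma \ref{lem:inscribed ball} to produce a $\partial\Omega$ point in a useful direction. The long extent of $\mathcal R$ forces a triangle whose longest side has length of order $A$ and whose opposite side has length of order $2a \asymp 1$; Lemma \ref{lem:equivalent fatness} together with direct trigonometric estimates then yield $\alpha(\T) \ge \arctan(1/47)$ and $A(\T) \ge 9/4420$.

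The hard part of the proof is verifying that this triangle $\T$ is genuinely \emph{inscribed}, i.e.\ that its sides (minus vertices) lie in $\Omega$. The edge $P^+P^-$ is safe: after arranging $x^+ = x^-$ by a continuity or bisection argument, its portion with $|y|\le a$ sits inside $\mathcal R$, while the two small caps above and below lie in the clearance segments $\{x^\pm\}\times[a,\, a+h^+(x^\pm))$ and $\{x^\pm\}\times(-a-h^-(x^\pm),\,-a]$, which are in $\Omega$ by definition of $h^\pm$. The two oblique sides $P^\pm Q$ are the genuine obstacle, because the shape of $\partial\Omega$ outside $\mathcal R$ is essentially uncontrolled: the construction of $Q$ (and the precise choice of $P^\pm$) must be coordinated so that each oblique side only briefly leaves $\mathcal R$ near its endpoint on $\partial \Omega$ and then remains strictly inside $\mathcal R$ all the way to $Q$. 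This geometric bookkeeping, balancing the horizontal positions of $x^\pm$, the location of $Q$, and the vertical clearance budget, is what forces the specific constants $13$, $47$, $\tfrac{9}{4420}$, and $2\sqrt{2}$ appearing in the statement.
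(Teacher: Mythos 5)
Your proposal has a genuine gap at its central dichotomy, and the gap occurs precisely in the configuration that makes this proposition hard. Consider $\Omega$ essentially a long, thin, slightly slanted strip $\{|y-\eps x|<c\}$ (intersected with a large ball), with $\mathcal R=[-A,A]\times[-a,a]$ maximal inside it; necessarily $\eps\lesssim 1/(3A)$. Then $h^+(x)=\eps(x+A)$ and $h^-(x)=\eps(A-x)$, so $S^+$ is an interval of length about $2\delta_0/\eps\gtrsim 6A\delta_0\gg 4\sqrt2\,\delta_0$ clinging to the \emph{left} end of $[-A,A]$, while $S^-$ clings to the \emph{right} end. Both branches of your dichotomy fail simultaneously: no axis-aligned interval of length $2A-4\sqrt2\,\delta$ carries clearance $2\delta$ in either vertical direction (you would need to discard $\sim 6A\delta$, not $4\sqrt2\,\delta$), and the points $x^+\approx -A$, $x^-\approx +A$ cannot be ``arranged to coincide by continuity or bisection'' --- they live at opposite ends, so the vertical segment $P^+P^-$ on which your triangle is built simply does not exist. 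This is exactly the ``bad rectangle'' case (boundary meeting $\mathcal R$ only near two antipodal corners) that the paper isolates; it is resolved there not by growing the rectangle vertically but by producing a \emph{tilted} fatter rectangle inscribed in a parallelogram (Case 3b of the paper's proof), after first extending the two half-rectangles sideways to rule out the remaining triangle cases. Restricting to axis-aligned growth makes option (ii) unreachable in this configuration, and your route to option (i) collapses with it.

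Separately, the part you defer as ``geometric bookkeeping'' --- choosing $Q$ and certifying that the sides $P^\pm Q$ avoid $\partial\Omega$ --- is not bookkeeping but the main technical content: the paper needs Lemma \ref{lem:trick set} (a chord of an upper semicontinuous boundary graph that stays above the graph on an interval of controlled length), Lemma \ref{lem:general box} (lifting a third vertex vertically to the \emph{lowest} boundary point over a trapezoid to force inscribedness), and Lemma \ref{lem:box} for the angle estimates. Your choice of $Q$ also lacks an upper bound on its distance from $P^\pm$: ``the first exit point when pushing horizontally from the origin'' can be at distance comparable to $\diam(\Omega)$, which is unbounded, and then the angle of your triangle at $Q$ is $O(1/\diam(\Omega))$ and falls below $\arctan(1/47)$. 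As it stands the proposal establishes neither alternative of the proposition.
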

We stress that the key point of Proposition \ref{prop:cases} are inequalities \eqref{eq:Acontrol}. The second  one says that in the new rectangle $\tilde {\mathcal R}$ the short side has increased by a definitive amount (i.e.\ $\frac{1}{3\diam(\Omega)}$), while the first one says that the longer side $\tilde A$ might me smaller, but (roughly) can not decrease more than how much the short side has increased. Note in particular that the right hand side of the first in \eqref{eq:Acontrol} is negative.

We now show how to prove Theorem \ref{thm:main} given the validity of the above results. Note that, even if the argument is by contradiction, it amounts to prove that a certain algorithm ends in a finite number of steps. Therefore the proof  still provides an explicit way of constructing a fat inscribed triangle in any given open set.
\begin{proof}[Proof of Theorem \ref{thm:main}]
	 Suppose by contradiction that
 \begin{equation}\label{eq:absurdum}
     \alpha(\T)<\frac14 \arcsin\left( \frac1{45}\right), \quad \text{ }
 \end{equation}
 for every  triangle $\T$ inscribed in $\Omega$ and having area $A(\T)\ge 10^{-5}r(\Omega)^2.$
 Up to rescaling we can assume that $r(\Omega)=1$.  We start by applying  Proposition \ref{prop:fat or rectangle}, with $C_1=15$. Then by \eqref{eq:absurdum} and since $\sin^2\left(\frac14\arcsin( \frac{1}{45})\right)\ge 10^{-5}$, we deduce that there must be a closed rectangle $\mathcal R_0\subset \cl(\Omega)$ of sides-length $2a_0\times 2A_0$ with $A_0\ge 15$ and $a_0=2/3.$
Next we claim that there exists an infinite sequence of closed rectangles $\mathcal R_i\subset \Omega$, $i \in \nn$, of sides-length $2a_i\times 2A_i$ satisfying for all $i \in \nn$:
 \begin{equation}\label{eq:induction prop}
 \begin{split}
 &\frac23 \le a_i\le 1,\\
     & A_{i}-A_{i-1}\ge 2{\sqrt 2}(a_{i-1}-a_{i}),\\
		&\frac{1}{3\diam(\Omega)} \le 2(a_{i}-a_{i-1}).
 \end{split}
 \end{equation}
We prove this by induction. To build $\mathcal R_1$ observe that, since $A_0\ge 13$ and $a_0=\frac23$,  we can  apply Proposition \ref{prop:cases} with $\mathcal R=\mathcal R_0$.  Then, thanks to the contradiction assumption \eqref{eq:absurdum} and since
\begin{equation}\label{eq:ok}
\begin{split}
      &\frac14 \arcsin\left( \frac1{45}\right)=0.005 \ldots  \le \arctan\left(\frac1{47}\right)=0.02\ldots,\\
      & \frac{9}{4420}\ge 10^{-5},
\end{split}
\end{equation}
we deduce that $ii)$ of Proposition \ref{prop:cases} must hold, i.e.\ there exists another closed rectangle $\mathcal R_1\subset \Omega$ of sides-length $2a_1\times 2A_1$ satisfying $a_1\le 1$ and 
\begin{equation}\label{eq:step 1 induction}
    A_1-A_0\ge 2{\sqrt 2}( a_0-a_1),\quad		\frac{1}{3\diam(\Omega)} \le a_1-a_0.
\end{equation}
 In particular $a_1\ge a_0\ge 2/3.$ This proves  \eqref{eq:induction prop} for $i=1.$  Suppose now that we have constructed closed rectangles $\mathcal R_i\subset \Omega$ with $i=1,\dots, k$ for some $k \in \nn$, of sides-length $2a_i\times 2A_i$ and satisfying \eqref{eq:induction prop} for every $i \in \{1,\dots k\}.$  Then we have
\[
A_k=A_0+\sum_{i=1}^{k} A_{i}-A_{i-1}\ge A_0+\sum_{i=1}^{k} \frac{3(a_{i-1}-a_{i})}{\sqrt 2}=A_0+ 2{\sqrt 2}(a_0-a_k)\ge 15-\frac{2{\sqrt 2}}{3}\ge 13,
\]
where in the second to last inequality we used that $a_0=\frac23$ and $a_k\le 1$.
Therefore the assumption of Proposition \ref{prop:cases} are again satisfied taking $\mathcal R=\mathcal R_k$. Hence,  by the contradiction assumption \eqref{eq:absurdum} and \eqref{eq:ok}, we deduce that item $ii)$ of Proposition \ref{prop:cases} must hold, i.e.\ there exists a closed rectangle $\mathcal R_{k+1}\subset \Omega$ of sides-length $2a_{k+1}\times 2A_{k+1}$ satisfying \eqref{eq:induction prop} for $i={k+1}.$ This proves the claim, i.e.\ that there exists a sequence of closed rectangles $\mathcal R_i\subset  \Omega$, $i \in \nn$, of sides-length $2a_i\times 2A_i$ satisfying \eqref{eq:induction prop} for all $i \in \nn$. However using the first and last in \eqref{eq:induction prop} we can write
\[
1\ge a_i= a_0+\sum_{j=1}^{i} a_{j}-a_{j-1}\ge a_0 + \frac{i}{6\diam(\Omega)}, \quad \forall i \in \nn,
\]
which is a contradiction.
\end{proof}

It remains to prove Propositions \ref{prop:fat or rectangle} and \ref{prop:cases}. 
We start with the first one, which is considerably easier. 
\begin{proof}[Proof of Proposition \ref{prop:fat or rectangle}]
Fix $C_1\ge 1$ arbitrary and set $\delta_1=\frac12\arcsin( \frac{1}{3C_1})<\frac \pi 2$.

By Lemma \ref{lem:inscribed ball} there exists $B_1(x)\subset \Omega$. Up to a translation we can assume that $x=0$ is the origin.
Lemma \ref{lem:inscribed ball}  also says that $C\coloneqq \partial B_1(0)\cap \partial \Omega$ can not be contained in any open half plane determined by a line through the origin. In particular $C$ contains at least two points $P,Q$. Up to a rotation we can assume that $P=(0,-1)$. Moreover, since $C$ is \emph{not} contained in the open lower half plane $\{y<0\}$, we can assume that $Q$ is in the (closed) upper half plane $\{y\ge 0\}$. We also denote by $\bar P=(0,1)$ the antipodal point of $P$ in $\partial B_1(0).$ We now distinguish two cases depending on how much $Q$ is close to  $\bar P.$ In the rest of the proof, for convenience, we will write $\arc(pq)$ to denote the length of the (shorter) arc with endpoints $p,q \in \partial B_1(0).$

\noindent{\textsc{ \color{blue}Case 1:}}  $\arc(Q\bar P)>\delta_1.$  Draw  the lines $l,m$ through the origin  and containing respectively two points $q',q''\in \partial B_1(0)$, lying in  the interior of the (shorter) arc $Q\bar P$ and satisfying $\arc(q'q'')=\delta_1$  (see Figure \ref{fig:circle}). 
	\begin{figure}[!ht]
\centering
\includegraphics[width=5.5cm, height=5.5cm]{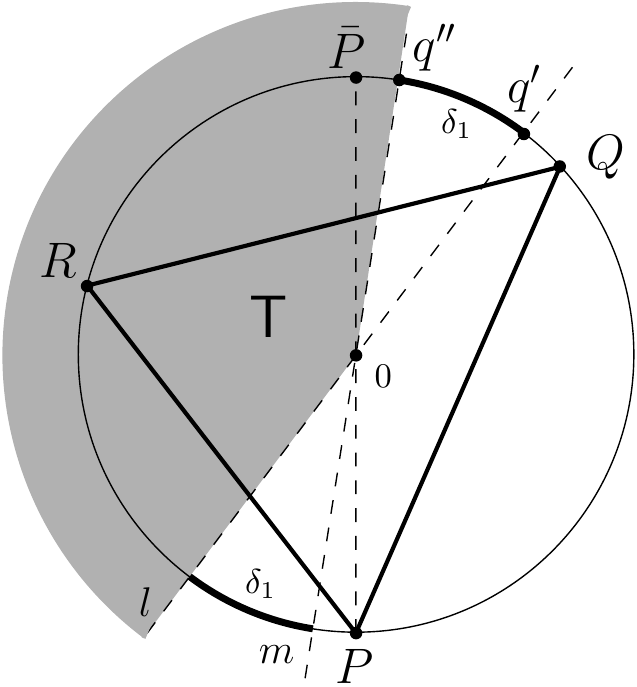}
\caption{The construction of $\T$ in  \textsc{Case 1}. In grey the region containing the point $R$, given by the intersection of the upper half planes determined by the lines $m$ and $l$}
\label{fig:circle}
\end{figure}
Then both $P,Q$  belong to  the lower open half planes determined by $l$ and $m$. Hence by Lemma \ref{lem:inscribed ball}  there must be a point $R\in \partial B_1(0)\cap \partial \Omega$ that lies in the intersection of closed upper half planes determined by $l$ and $m$. In particular (see Figure \ref{fig:circle}) $\arc(RP) \ge\delta_1,\arc(RQ)\ge\delta_1$. Moreover, since $Q$ belongs to the upper half plane $\{y\ge 0\}$, we have $\arc(PQ)\ge \delta_1.$ Hence the triangle ${\sf T}$  of vertices $P,Q,R$ satisfies $\alpha({\sf T})\ge \frac12{\delta_1}$. Indeed each of its angles is an angle at the circumference standing on an arc of length not less than $\delta_1$. Clearly $\T$  is inscribed in $\Omega$. Finally the area of $\T$ is
\begin{equation}\label{eq:area}
A(\T)=\frac{(\overline{PQ})^2 \sin \hat P  \sin \hat Q}{2\sin \hat R}\ge \sin^2\left(\frac14\arcsin( \frac{1}{3C_1})\right),
\end{equation}
where we used that $\overline{PQ}\ge \sqrt{2}$, since $Q$ lies in the upper half plane, and that $\alpha({\sf T})\ge \frac12{\delta_1}.$
This concludes the proof in the first case.

\noindent{\textsc{ \color{blue}Case 2:}} $\arc(Q\bar P)\le  \delta_1.$

	
	
	Consider the rectangle $(-A',A)\times [-\frac23,\frac23]\subset  \Omega$, where $A\ge 0$ and $A'\ge 0$ are taken to be  maximal with this property. By symmetry we can assume that $A'\ge A.$
 Note first that $A,A'>0$, since $B_1(0)\subset \Omega$. Moreover  by maximality the  side $\{A\}\times [-\frac23,\frac23]$ intersects $\partial \Omega.$ 
	
	If  $A > C_1$, then the rectangle $\mathcal R\coloneqq [-A'+\eps,A-\eps]\times [-\frac23,\frac23]$, for $\eps>0$ small enough, satisfies $ii)$ in the statement  and we are done. Hence we  assume from now on that $A\le  C_1.$ We also denote $\mathcal R_+\coloneqq [0,A]\times [-\frac23,\frac23].$ Let $r$ be the tangent line to circle $\partial B_1(0)$ containing $(A,\frac23)$ (the top-right corner of $\mathcal{R}$) having tangency point closer to $\bar P$ (see Figure \ref{fig:ball}).
 	\begin{figure}[!ht]
\centering
\includegraphics[width=9cm, height=9.5cm]{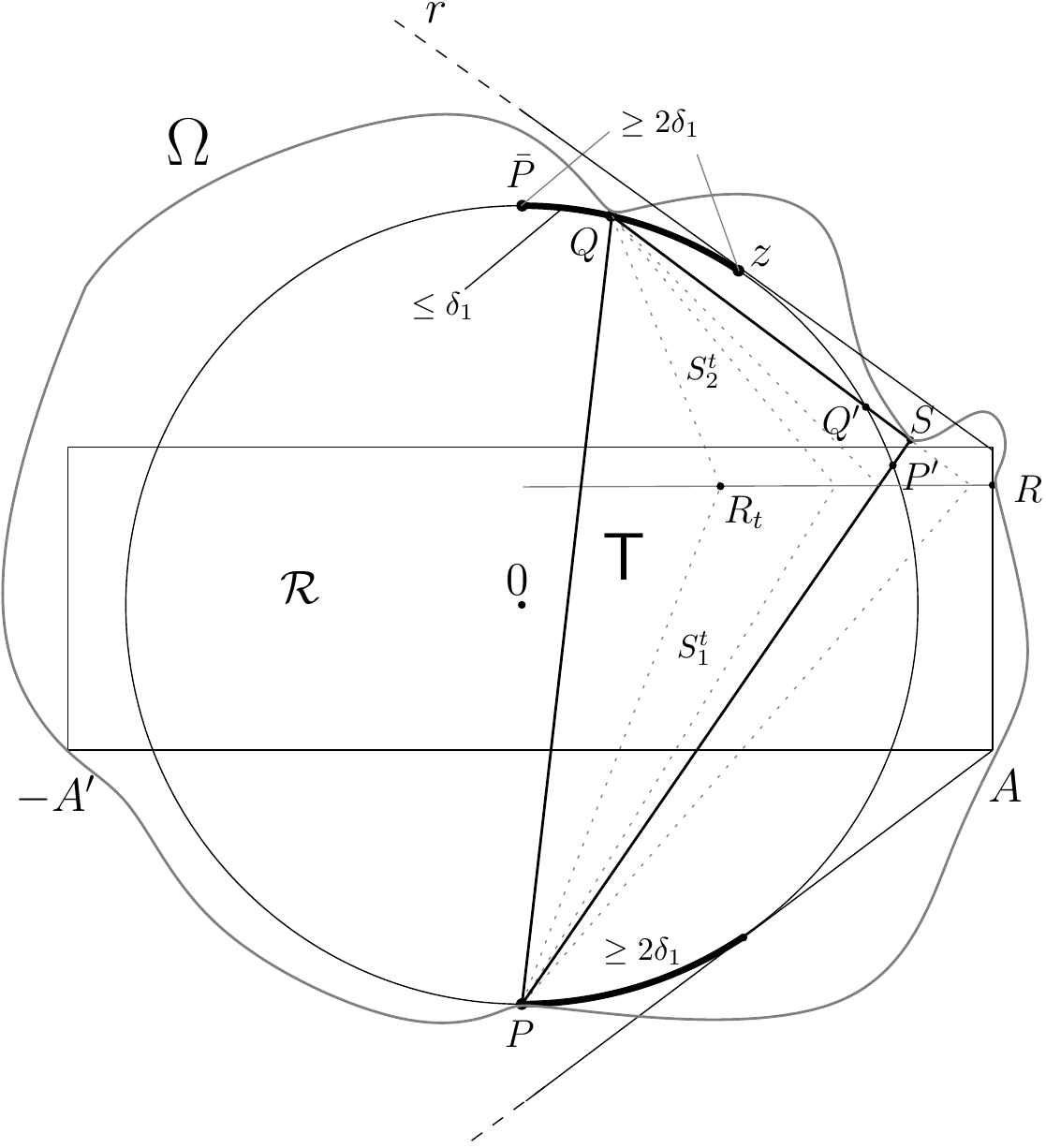}
\caption{The construction of $\T$ in \textsc{Case 2}. Note that $Q$ can also lie to the left of $P$, i.e.\ in the half space $\{x< 0\}$}
\label{fig:ball}
\end{figure}
 Denote by $z$ this tangency point.   Set $l\coloneqq \arc(z\bar P)$. Elementary trigonometric considerations show that
  \begin{equation}
      A\sin(l)={1-\frac23\cos(l)} \implies \sin(l)\ge \frac{1}{3A}\ge \frac{1}{3C_1}.
  \end{equation}
  In particular $l\ge \arcsin( \frac{1}{3C_1})$ and by the initial choice of $\delta_1$ it holds $l\ge 2\delta_1.$ Note that every segment having as endpoints $Q$ and a point in $\mathcal R_+\setminus B_1(0)$, passes below the point $z$ (i.e.\ intersects the vertical line containing $z$ at a point closer to $z$ than the $x$-axis). Indeed any such segment  must exit $B_1(0)$ and is contained in lower half plane determined by the tangent line $r$ (since so are its endpoints). This and recalling that $\arc(z\bar P)=l\ge 2\delta_1$ and $\arc(Q\bar P)\le \delta_1$ implies that
	\begin{equation}\label{eq:safe segments q}
		\parbox{11cm}{every segment having as endpoints $Q$ and a point in $\mathcal R_+\setminus B_1(0)$ intersects $\partial B_1(0)$ in a point $Q'\neq Q$ satisfying  $\arc({ QQ'})\ge \delta_1.$ }
	\end{equation}
	Symmetrically, since $P=(0,-1),$ we also get that
 \begin{equation}\label{eq:safe segments p}
		\parbox{11cm}{every segment having as endpoints $P$ and a point in $\mathcal R_+\setminus B_1(0)$ intersects $\partial B_1(0)$ in a point $P'\neq P$ satisfying  $\arc({ PP'})\ge 2\delta_1.$ }
	\end{equation}
 Recall that $\{A\}\times [-\frac23,\frac23]$ intersects $\partial \Omega$, hence there exists a point $R\coloneqq (A,R_y)\in \partial \Omega$ with $R_y\in [-\frac23,\frac23].$ For every $t\in [0,A]$ set $R_t\coloneqq (t,R_y)$ and consider the closed segments $S_1^t$, $S_2^t$ joining respectively $P$ and $Q$ with the point $R_t$ (see Figure \ref{fig:ball}). Set 
	$$t_0=\sup \{t \ :  \big(S^1_{t}\cup S^2_{t}\setminus \{Q,P\}\big)\cap\partial \Omega = \emptyset \}.$$
	Note that, since $R\in \partial \Omega$ and $B_1(0)\subset \Omega$, then $0< t_0\le A$. Therefore $R_{t_0}\in \mathcal R_+$. We claim that 
 \begin{equation}\label{eq:non empty closed}
      K\coloneqq \big((S^1_{t_0}\cup S^2_{t_0})\cap\partial \Omega\big) \setminus \{Q,P\}\text{ is a non-empty closed set.}
 \end{equation}
 Observe first that by \eqref{eq:safe segments p} and \eqref{eq:safe segments q} we have that
 \begin{equation}\label{eq:distance from pq}
     d(K,\{P,Q\})>0.
 \end{equation}
 This shows that $K$ is closed, since it is a closed set minus two isolated points. To show that $K$ is non-empty, by definition of $t_0$, note that there exist sequences $t_n \to t_0$ and $x_n \in  \big(S^1_{t_n}\cup S^2_{t_n}\setminus \{Q,P\}\big)\cap\partial \Omega $. Up to a subsequence we have that $x_n\to x\in  \big(S^1_{t_0}\cup S^2_{t_0}\big)\cap \partial \Omega$. However by \eqref{eq:distance from pq} we have  $d(x,\{P,Q\})>0$,  hence $x \in K$. This proves \eqref{eq:non empty closed}.
 Since $K$ is closed and non-empty, there exists $S\in K$ such that
 \begin{equation}\label{eq:S minimal}
     d(S,\{P,Q\})=d(K,\{P,Q\})>0.
 \end{equation}
  Consider the triangle $\T$ of vertices $P,Q,S \in \partial \Omega$.  By construction (the closure of) $\T$ is contained in
	\[
	\cup_{t \in[0,t_0]} S_1^{t}\cup S_2^t\subset \overline{\Omega}.
	\]
 Clearly the interior of the side $PQ$ is in $B_1(0)\subset \Omega$. Moreover both the sides $PS$ and $QS$ are contained in $S^1_{t_0}\cup S^2_{t_0}$. Hence, by definition of $S$, their interior can not intersect $K$ and so must be contained in  $\Omega$.
 This shows that $T$ is inscribed in $\Omega.$
	To give a lower bound on $\alpha(\T)$ we first observe that
 \begin{equation}\label{eq:fat two angles}
     \widehat{P},\widehat{Q}  \ge \frac12 \delta_1.
 \end{equation}
 To see this note that, since $S\in \rr^2\setminus B_1(0),$ the sets $QS\setminus\{Q\},PS\setminus\{P\}$ intersect   with $\partial B_1(0)$ at points that we denote by $Q',P'\in \partial B_1(0)$ respectively (see Figure \ref{fig:ball}). By \eqref{eq:safe segments q} and \eqref{eq:safe segments p} we have  that 
 $$\arc(P'Q)\ge\arc(QQ')\ge \delta_1, \quad \arc(Q'P)\ge \arc(P'P)\ge2\delta_1.$$ This shows \eqref{eq:fat two angles}, because $\widehat{P},\widehat{Q}$ are  angles at the circumference standing on arcs of length not less than $\delta_1.$
 Moreover, as $\arc(\bar PQ)\le \delta_1$, we also get
 \[
 \arc(P'Q)\le\arc(P'\bar P)+\arc(\bar PQ) =\pi-\arc(PP')+\arc(\bar PQ)\le \pi-\delta_1.
 \]
 Therefore $\widehat{P}\le \frac \pi2-\frac12\delta_1 $. Additionally $Q'\in \{x\ge 0\}$, hence  $\widehat{Q}\le \frac12\pi$, which implies
\[
\widehat{S}=\pi- \widehat{P}-\widehat{Q}\ge \frac12 \delta_1.
\]
 This shows that $\alpha(\T)\ge \frac 12\delta_1$. Finally, as in Case 1 we have $\overline{PQ}\ge \sqrt{2}$, so the lower bound \eqref{eq:area} for the area of $\T$ holds also in this case. This concludes the proof.
\end{proof}

It remains to prove Proposition \ref{prop:cases}. We give first an idea of the geometric construction. Recall that Proposition \ref{prop:cases} assumes the presence of a long rectangle $\mathcal R$ inside $\Omega$. Up to stretching the longer side, we can assume that $\mathcal R$ intersects $\partial \Omega$ in both its short sides.
The difficult case is when the only intersection points are precisely (or almost) two antipodal vertices of $\mathcal R$, that is the endpoints of a diagonal. In this case we say that $\mathcal R$ is \emph{bad rectangle}, otherwise we say that $\mathcal R$ is a \emph{good rectangle}. 
See for example Figure \ref{fig:idea} for an example of a bad rectangle $\mathcal R$ for which there are actually no triangles inscribed in $\Omega$ near $\mathcal R.$ 
In the case of a bad rectangle $\mathcal R$ the strategy will be to prolong half of $\mathcal R$ on both sides and to create two other rectangles $\mathcal R_1,\mathcal R_2$ inside $\Omega$. If one of them is good, we are done. If they are again both bad rectangles, this will force $\Omega$ to open up and will allow us to find a fatter rectangle $\tilde {\mathcal R},$ as in the conclusion of Proposition \ref{prop:cases}. We are not going to use the terminology bad and good rectangles in the proofs, its purpose  was only to give here an idea of the argument.

\begin{figure}[!ht]
\centering
\includegraphics[width=10cm, height=5cm]{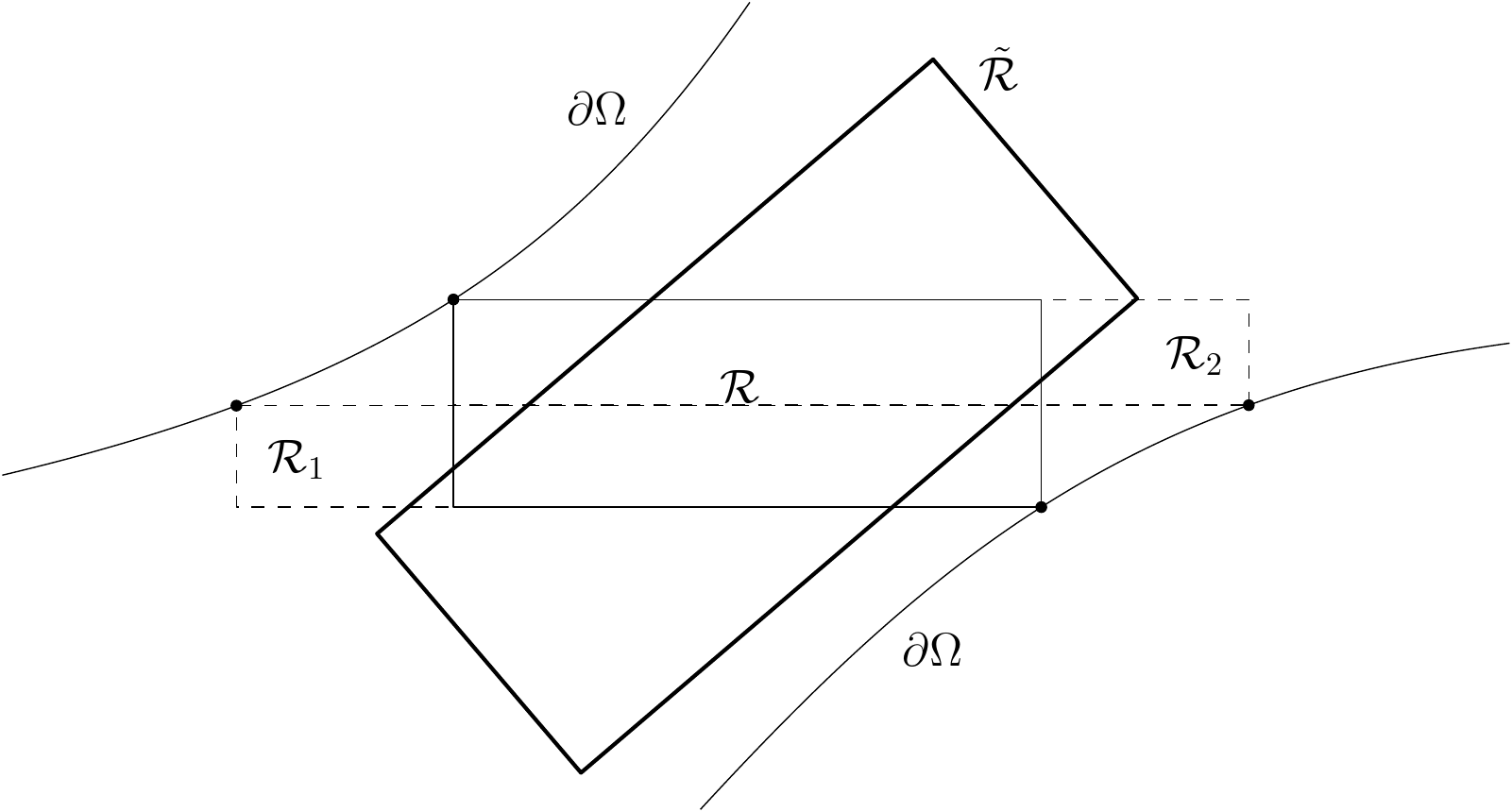}
\caption{Outline of the geometric idea for the proof of Proposition \ref{prop:cases}}
\label{fig:idea}
\end{figure}

We start with some preliminary technical results.
The main tool that we will use is  technical Lemma \ref{lem:trick set}, which roughly   says the following. Let $C$ be a closed set that contains two points $P,Q$ \emph{above} a  line $l$ and lies  \emph{below} $l$ in a vertical strip between $P,Q$. Suppose also that $C$ intersects every vertical line between $P$ and $Q$. Then we can find a segment with endpoints in $C$ which stays above $C$ (see Figure \ref{fig:lemma}). The key points is that this segment can be taken to be contained in arbitrarily thin vertical strip between $P,Q$. 
\begin{lemma}[Two points above-interval below]\label{lem:trick set}
    Let $C\subset \rr^2$ be a closed  set such that $C\subset \{(x,y)\ : \ y\le a\}$ for some $a \in \rr$ and let  $y=l(x)$ be a (non-vertical) line.   Suppose that:
    \begin{enumerate}[label=\roman*)]
        \item\label{it:above}  $C$ contains two points $(x_i,y_i)$, $i=0,1$, such that $y_i\ge l(x_i)$,  $i=0,1,$
        \item\label{it:vertical} $C\cap \{x=t\}\neq\emptyset$ for all $t\in[x_0,x_1]$,
        \item\label{it:below} there exists $[t_0,t_1]\subset [x_0,x_1]$ such that $C\cap\{t_0<x<t_1\}\subset l_\leq\coloneqq \{(x,y) \ : \ y\le l(x)\}$.
    \end{enumerate}
Then for every $\lambda\le |t_0-t_1|$ there exists $[s_0,s_1]\subset [x_0,x_1] $ with  $|s_0-s_1|\in [\lambda,2\lambda]$ and a line $y=\tilde l(x)$ such that  $C\cap\{s_0<x<s_1\}\subset \tilde l_\leq$ for every $t\in(s_0,s_1)$ and such that $(s_i,l(s_i))\in C$, $i=0,1.$ Moreover if the inequality in \ref{it:below} is strict, i.e.\  $C\cap\{t_0<x<t_1\}\subset l_<$,  we can also choose $t_0,t_1$ so that $C\cap\{s_0<x<s_1\}\subset \tilde l_<$.
\end{lemma}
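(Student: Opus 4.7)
The plan is to reduce to a one-dimensional statement about the upper envelope
\[
f(t) := \max\{y : (t,y) \in C\},\qquad t \in [x_0,x_1],
\]
which is well-defined by \ref{it:vertical} and the bound $C \subset \{y \le a\}$, and upper semicontinuous because $C$ is closed; the graph of $f$ is contained in $C$. The hypotheses become $f(x_i) \ge l(x_i)$ for $i=0,1$ and $f \le l$ on $(t_0,t_1)$ (strictly in the strengthened variant). The conclusion then amounts to finding $[s_0,s_1] \subset [x_0,x_1]$ of length in $[\lambda,2\lambda]$ such that the chord from $(s_0,f(s_0))$ to $(s_1,f(s_1))$ --- with both endpoints in $C$ --- lies above $f$ on the open strip $(s_0,s_1)$; this chord is the line $\tilde l$ appearing in the conclusion.

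\textbf{Concave envelope.} The natural tool is the concave envelope $g$ of $f$ on $[x_0,x_1]$, i.e.\ the smallest concave function majorizing $f$. Because $g-l$ is concave with non-negative values at $x_0, x_1$, it is non-negative throughout, so $g \ge l$ on $[x_0,x_1]$. The open set $\{g>f\}$ decomposes into disjoint open intervals $(a_j,b_j)$ with $g(a_j)=f(a_j)$ and $g(b_j)=f(b_j)$. In the strict variant of \ref{it:below}, the chain $g \ge l > f$ on $(t_0,t_1)$ forces $(t_0,t_1)$ to lie in a single such ``gap'' $(\alpha,\beta)$, of width $\beta-\alpha \ge t_1-t_0 \ge \lambda$. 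The non-strict hypothesis would be reduced to the strict one by slightly shrinking $[t_0,t_1]$ inward, exploiting upper semicontinuity of $f-l$.

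\textbf{The easy sub-case.} The cleanest situation is when the local concave envelope of $f$ on $[\alpha,\beta]$ is affine --- equivalently, when the chord $L$ joining $(\alpha,f(\alpha))$ and $(\beta,f(\beta))$ already majorizes $f$ on $[\alpha,\beta]$ --- and $\beta-\alpha \in [\lambda,2\lambda]$. In that case I would just set $[s_0,s_1]:=[\alpha,\beta]$ and $\tilde l:=L$: the endpoints lie in $C$, and for any $(t,y)\in C$ with $t \in (s_0,s_1)$ one has $y \le f(t) \le L(t)=\tilde l(t)$, strictly if $L > f$ on $(\alpha,\beta)$, which yields also the ``Moreover'' clause.

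\textbf{Main obstacle: width and shape.} The real work is the simultaneous adjustment of two features: the gap $(\alpha,\beta)$ may be wider than $2\lambda$, and its local concave envelope may fail to be affine (so the ``natural'' chord is not directly above $f$). My plan for both issues is the same device, iterated: compute the local concave envelope of $f$ on a sub-window of length $2\lambda$ inside $[\alpha,\beta]$; this produces sub-gaps of width $\le 2\lambda$, within each of which the local envelope is either already affine (giving an admissible chord whose width one must still locate in $[\lambda,2\lambda]$) or can be broken down further by the same trick. A sliding-window/intermediate-value argument on the position of the window, combined with the bound $\lambda \le |t_0-t_1|$ and the factor $2$ appearing in the target range $[\lambda,2\lambda]$, then produces a sub-gap of the required width. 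This width-and-shape control --- fitting a ``good'' chord precisely into the range $[\lambda,2\lambda]$ --- is where the main technical difficulty of the lemma lies.
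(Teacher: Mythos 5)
Your reduction to a one--dimensional statement about the upper envelope $f(t)=\max\{y:(t,y)\in C\}$ is exactly the paper's first move (this is its Lemma \ref{lem:trick}), and your structural observations about the concave envelope $g$ are essentially correct: on each component $(\alpha,\beta)$ of $\{g>f\}$ the envelope is affine and coincides with the chord joining $(\alpha,f(\alpha))$ to $(\beta,f(\beta))$, so that chord automatically majorizes $f$ there. Consequently the only remaining obstacle is the one you name last: the gap $(\alpha,\beta)$ may be much wider than $2\lambda$, and one must extract from it a chord whose horizontal width lands in $[\lambda,2\lambda]$. This is precisely where your proposal stops being a proof. The phrase ``a sliding-window/intermediate-value argument on the position of the window'' does not identify any quantity that varies continuously (or monotonically) in the window position: the widths of the gaps of the local concave envelope of an upper semicontinuous --- or even continuous --- function jump discontinuously as the window slides, and a window may contain no gap at all (e.g.\ where $f$ is locally concave). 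Nor does ``breaking down further by the same trick'' help, since subdividing only produces narrower gaps, while the difficulty is to guarantee a width of \emph{at least} $\lambda$. The kinship with the universal chord theorem should make you suspicious of a bare intermediate-value argument here: the set of admissible chord widths need not be an interval, and the factor $2$ in $[\lambda,2\lambda]$ has to be exploited through a specific mechanism. The paper's mechanism is a recursive near-halving: after normalising the gap to $[0,1]$ with $f\le 0$ and $f(0)=f(1)=0$, it takes $s_1$ to be a maximum point of $f$ over (say) $[1/2,1-\eps]$, joins it to $0$ by a line $r$, defines $s_0$ as the last point of $[0,\eps]$ where $f\ge r$, and checks that the resulting chord majorizes $f$ while its width lies in $[1/2-\eps,1-\eps]$; iterating shrinks the width geometrically by factors in $[\tfrac12-\eps,1-\eps]$, and stopping at the first step below $2\lambda$ lands in $[\lambda-O(\eps),2\lambda]$, after which a separate compactness/limit argument removes the $\eps$. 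None of this construction, nor a workable substitute for it, appears in your sketch.

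A secondary inaccuracy: you cannot reduce the non-strict hypothesis $f\le l$ on $(t_0,t_1)$ to the strict one ``by slightly shrinking $[t_0,t_1]$ inward, exploiting upper semicontinuity'' --- if $f=l$ on all of $(t_0,t_1)$ no amount of shrinking creates strictness, and in that case $(t_0,t_1)$ need not lie inside a single component of $\{g>f\}$, so even the starting point of your gap analysis can fail in the non-strict case. The paper instead treats the two cases in parallel throughout (the strictness is simply propagated through each step of the construction), and handles the passage to the limit $\eps\to 0$ with some care to preserve strict inequality. In summary: the set-up and the easy case are fine and agree with the paper, but the central width-extraction step --- the actual content of the lemma --- is missing.
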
 

\begin{figure}[!ht]
\centering
\includegraphics[width=8cm, height=5.5cm]{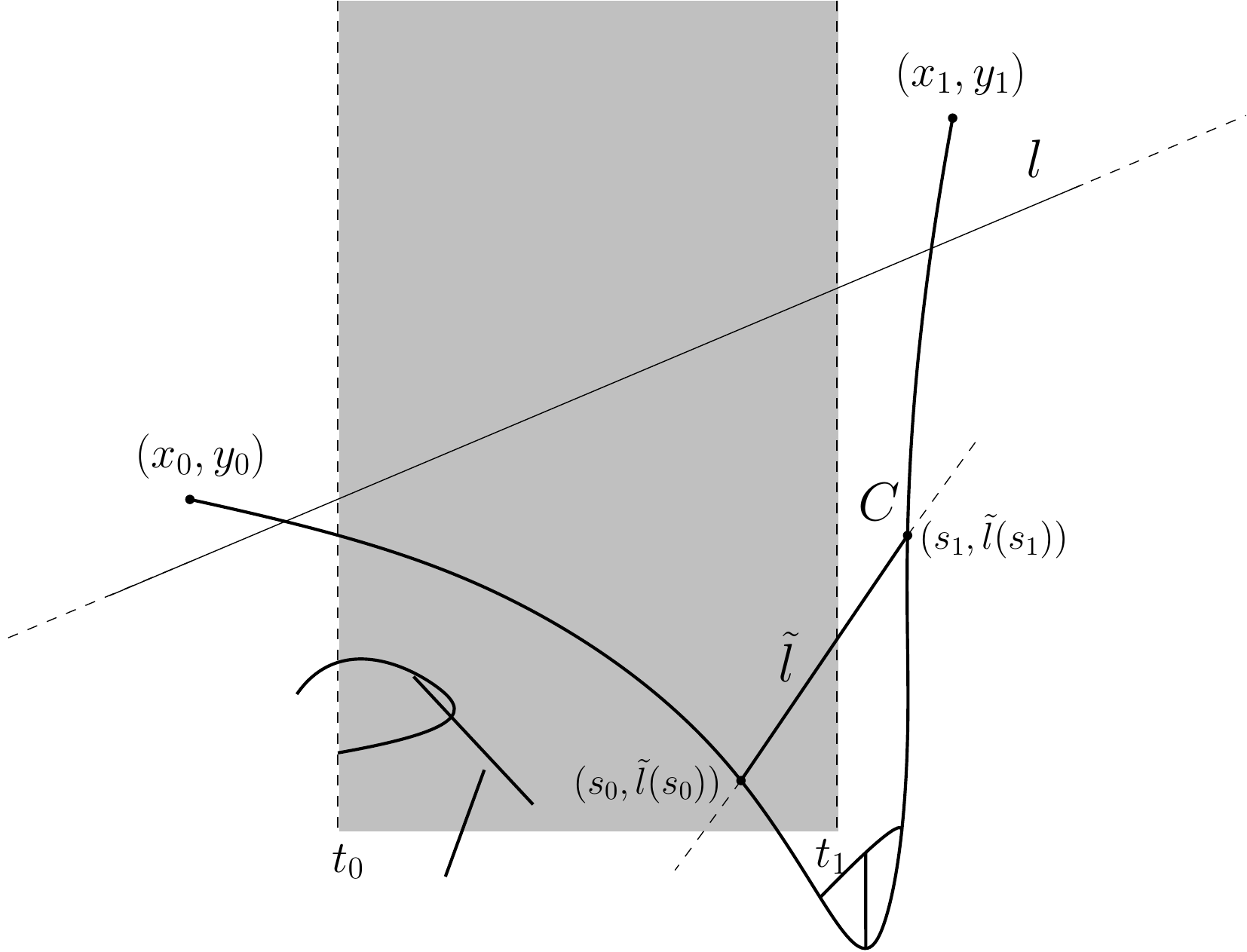}
\caption{Illustration of the statement of Lemma \ref{lem:trick set}. Note that the segment $[s_0,s_1]$ needs not to be contained in $[t_0,t_1]$}
\label{fig:lemma}
\end{figure}

The proof of Lemma \ref{lem:trick set} is postponed  to Section \ref{sec:lemma}, since it can be formulated as a stand-alone result about functions on the unit interval. 

The following lemma contains only an elementary computation which quantifies the fact that if if three points are not too far and not too close to one other, they form a fat triangle. 
\begin{lemma}\label{lem:box}
    Consider three points $P_0=(x_0,y_0)$, $P_1=(x_1,y_1)$, $P_2=(x_2,y_2)$ satisfying $x_0\le x_2\le x_1$, $y_0\le y_1\le y_2$ and
    \begin{equation}\label{eq:box bounds}
          3\le |x_0-x_1|\le 6, \quad  \frac13\le |y_2-y_1|\le 2,\quad \frac{|y_0-y_1|}{|x_0-x_1|}\le \frac{2}{|x_0-x_1|-2}.
    \end{equation}
    Then the triangle $T$ of vertices $P_1,P_2,P_3$ satisfies
    $\alpha(T)\ge \arctan\left(\frac{1}{47}\right).
    $
\end{lemma}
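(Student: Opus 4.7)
The plan is a direct trigonometric calculation, bounding each of the three interior angles of $T$ from below. After translating so that $P_1 = (0,0)$ and writing $P_0 = (-p,-q)$, $P_2 = (-r,s)$, the hypotheses become $p \in [3,6]$, $r \in [0,p]$, $s \in [1/3,2]$, and (from the third inequality in \eqref{eq:box bounds}) $q \in [0, 2p/(p-2)]$. For each vertex $V$ of $T$, the tangent of the interior angle equals the ratio of the magnitude of the cross product to the scalar product of the two outgoing edge vectors; when the scalar product is negative the angle is obtuse and the lower bound $\arctan(1/47)$ is trivial, so I only need to control the acute case.

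At $P_1$, the vectors $(-p,-q)$ and $(-r,s)$ have scalar product $pr - qs$ and cross magnitude $ps + qr$, and the acute-case inequality $47(ps+qr) \ge pr - qs$ is immediate from $ps \ge 1$ and $pr \le 36$. At $P_2$, the vectors $(r-p, -q-s)$ and $(r,-s)$ give scalar product $s(q+s) - r(p-r)$ and again cross magnitude $ps + qr$; the required inequality rearranges to $s(47p - q - s) + r(47q + p - r) \ge 0$, which is automatic since $47p - q - s \ge 133$ and $47q + p - r \ge 0$ throughout the parameter region.

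The tight case is the angle at $P_0$. The vectors $(p,q)$ and $(p-r, q+s)$ have positive scalar product $p(p-r) + q(q+s)$, so $\hat{P_0}$ is acute and $\tan(\hat{P_0}) = (ps + qr)/(p(p-r) + q(q+s))$. A short differentiation in $r$ gives a numerator equal to $(p^2 + q^2)(s + q) > 0$, so the minimum occurs at $r = 0$, reducing the bound to
\[
s(47p - q) \ge p^2 + q^2.
\]
Since $47p - q > 0$, this is worst at $s = 1/3$ and at the extremal slope $q = 2p/(p-2)$; setting $u = p-2 \in [1,4]$ and clearing denominators transforms it into
\[
(u-1)\bigl(-3u^3 + 32 u^2 + 100 u + 48\bigr) \ge 0,
\]
where the cubic factor is positive on $[1,4]$ because its derivative $-9u^2 + 64u + 100$ is positive there and the cubic itself equals $177$ at $u = 1$. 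Equality holds precisely at $u = 1$, i.e.\ at $p = 3$, $q = 6$, $r = 0$, $s = 1/3$, which pins down the sharp constant $\arctan(1/47)$.

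The main obstacle is bookkeeping rather than conceptual: one must carefully reduce the inequality at $P_0$ to its extremal configuration and verify the polynomial factorization cleanly, while making sure the obtuse-angle subcases at $P_1$ and $P_2$ are handled correctly (in fact trivially) so that $\hat{P_0}$ is confirmed as the critical angle.
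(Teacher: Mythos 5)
Your proof is correct; I checked the vector computations, the monotonicity reduction at $P_0$, and the final polynomial inequality (your quartic $(u-1)(-3u^3+32u^2+100u+48)$ is indeed $(u+2)(u-1)(-3u^2+38u+24)$, i.e.\ the cleared-denominator form of the inequality before dividing out $p=u+2$, and your positivity argument for the cubic factor on $[1,4]$ is valid). The overall strategy coincides with the paper's --- bound each of the three angles separately, with $\hat P_0$ as the critical one --- but the execution is genuinely different. The paper works synthetically: it bounds $\hat P_1$ by the angle its side makes with the vertical, splits $\hat P_2$ by the vertical through $P_2$ and uses subadditivity of $\arctan$, and handles $\hat P_0$ as a difference of two arctangents via the identity $\arctan a-\arctan b=\arctan\frac{a-b}{1+ab}$, obtaining the three bounds $\arctan(1/18)$, $\arctan(3/14)$, $\arctan(1/47)$. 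You instead express each tangent as a ratio of cross to dot product, dispose of the obtuse cases for free, and reduce everything to polynomial inequalities over the parameter box, optimizing in $r$, $s$, $q$, $p$ in turn. Your route requires more careful bookkeeping but is mechanically verifiable and, as a bonus, identifies the extremal configuration $(p,q,r,s)=(3,6,0,1/3)$ showing that $\arctan(1/47)$ is the sharp output of these hypotheses, something the paper's chain of one-sided estimates does not make explicit.
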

\begin{proof}
Observe first that the assumptions implies that $|y_0-y_1|\le  \frac{2|x_0-x_1|}{|x_0-x_1|-2}\le 12$ and $ \frac{|y_0-y_1|}{|x_0-x_1|}\le 2.$  Combining these we get 
\begin{equation}\label{eq:stupid}
    \frac{|y_1-y_0|(|y_1-y_0|+1/3)}{|x_0-x_1|}\le \frac{4|x_0-x_1|}{|x_0-x_1|-2}+\frac23,
\end{equation}
which we will use later on in the argument.
We will give separately a lower bound for each angle $\widehat P_i$, $i=1,2,3.$
    \begin{figure}[!ht]
\centering
\includegraphics[width=4cm, height=5cm]{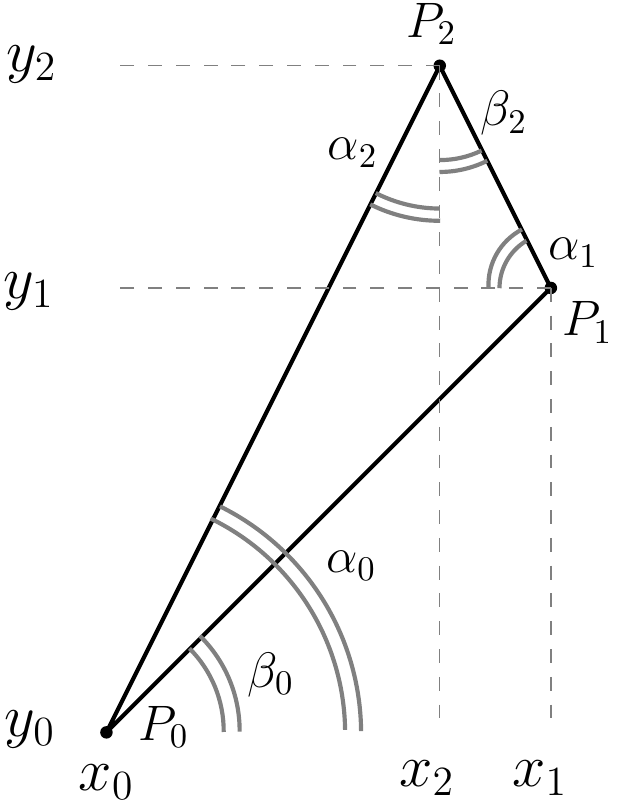}
\caption{The auxiliary angles used in the proof of Lemma \ref{lem:box}. Note that the relative position of the points $P_1,P_2,P_3$ is the one depicted, since by assumption $x_0\le x_2\le x_1$, $y_0\le y_1\le y_2$}
\label{fig:box}
\end{figure}
We have that $\widehat P_1\ge \alpha_1$, where $\alpha_1$ is the (smaller) angle formed by the segment $P_1P_2$ with the vertical axis (see Figure \ref{fig:box}). Then
\[
\widehat P_1\ge \alpha_1=\arctan\left( \frac{|y_2-y_1|}{|x_1-x_2|}\right)\ge \arctan\left( \frac{1}{18}\right).
\]
Similarly it holds that $\widehat P_2= \alpha_2+\beta_2$, where $\alpha_2$ and $\beta_2$ are the two angles into which $\widehat P_2$ is divided by the vertical line containing $P_2$ (see Figure \ref{fig:box}). In particular  $\alpha_2<\pi/2$ and $\beta_2<\pi/2$. Hence 
\begin{align*}
    \widehat P_2&= \alpha_2+\beta_2=\arctan\left( \frac{|x_0-x_2|}{|y_2-y_0|}\right)+\arctan\left( \frac{|x_2-x_1|}{|y_2-y_1|}\right)\\
    &\ge \arctan\left( \frac{|x_0-x_2|}{|y_2-y_0|}\right)+\arctan\left( \frac{|x_2-x_1|}{|y_2-y_0|}\right)\\
    &\ge \arctan\left( \frac{|x_0-x_1|}{|y_2-y_0|}\right)\ge \arctan\left(\frac{3}{|y_2-y_1|+|y_1-y_0|}\right)\ge 
    \arctan\left(\frac{3}{14}\right),
\end{align*}
where in the first inequality we used that $|y_2-y_0|\ge |y_2-y_1|$ and the monotonicity of $\arctan(\cdot)$,  in the second inequality the sub-additivity of $\arctan(\cdot)$ in $[0,\infty)$ and in the last step that $|y_0-y_2|\le12$ and $|y_0-y_1|\le 2.$ Finally we have that $\widehat P_0= \alpha_0-\beta_0$, where $\alpha_0$ and $\beta_0$ are the (smaller) angles  which the segments $P_0P_2$ and $P_0P_1$ form respectively with the horizontal axis (see Figure \ref{fig:box}). In particular $\alpha_0\le \pi/2,\beta_0\le \pi/2$ and $\hat P_0$.  Then
\begin{align*}
    \widehat P_0&= \alpha_0-\beta_0=\arctan\left( \frac{|y_2-y_0|}{|x_0-x_2|}\right)-\arctan\left( \frac{|y_1-y_0|}{|x_0-x_1|}\right)\\
    &\ge\arctan\left( \frac{|y_1-y_0|+1/3}{|x_0-x_1|}\right)-\arctan\left( \frac{|y_1-y_0|}{|x_0-x_1|}\right)\\
    &\ge \arctan\left( \frac{1/3}{|x_0-x_1|+\frac{|y_1-y_0|(|y_1-y_0|+1/3)}{|x_0-x_1|}}\right)
    \ge \arctan\left( \frac{1/3}{|x_0-x_1|+\frac{4|x_0-x_1|}{|x_0-x_1|-2}+\frac23}\right)\\
    &\ge \arctan\left( \frac{1/3}{15+\frac23}\right)=\arctan\left(\frac{1}{47}\right),
\end{align*}
where in the second line we used that $|x_0-x_2|\le |x_0-x_1|$, $|y_2-y_0|\ge |y_1-y_0|+1/3$ and monotonicity, in the third line we used the formula $\arctan(a)-\arctan(b)=\arctan\frac{a-b}{1+ab}$, valid for all $a,b\in \rr$ satisfying $\arctan(a)-\arctan(b)\in(-\frac{\pi}{2},\frac{\pi}{2})$, and also \eqref{eq:stupid}, while  in the last step that $t+\frac{4t}{t-2}\le 15$ for every $t\in[3,6]$ (recall that $|x_0-x_1|\in[3,6]$). Note that in the first line it might be that $|x_0-x_2|=0$, but the argument still works using the convention that $\arctan(+\infty)=\pi/2.$

Putting together all the lower estimates for $\hat P_1,\hat P_2,\hat P_3$ we conclude.
\end{proof}

The next result roughly says that, if we can find a segment with endpoints in $\partial \Omega$ and having a sufficiently large strip above it, i.e.\ a trapezoid, which is contained in $\Omega$ (see Figure \ref{fig:scheme}), then we can find a fat triangle inscribed in $\Omega.$ This will be used twice,  in the proof of the subsequent Lemma \ref{lem:easy case} and in the proof of Proposition \ref{prop:cases}.
\begin{lemma}[Trapezoid implies fat triangle]\label{lem:general box}
    Let $\Omega\subset \rr^2$ be open with $r(\Omega)=1$ and let $y=l(x)$ be a line (in Cartesian coordinates). Suppose that for some interval $[s_0,s_1]\subset \rr$, with $3\le |s_0-s_1|\le 6$, it holds $P_i=(s_i,l(s_i))\in \partial \Omega$, $i=0,1,$ and
    \begin{equation}\label{eq:trapez inside}
          \mathcal T\coloneqq \left \{(x,y) \ : \  x\in (s_0,s_1),\quad  l(x)\le y\le  c \right\}\subset  \Omega,
    \end{equation}
    for some $c\ge l(s_i)+\frac13$, $i=0,1.$
    Then $\Omega$ contains an inscribed triangle $\T$ with $\alpha(\T)\ge \arctan\left(\frac{1}{47}\right)$ and $A(\T)\ge \frac{9}{4420}.$
\end{lemma}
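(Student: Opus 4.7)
The plan is to produce an inscribed triangle by a sweep argument: fix $P_i := (s_i, l(s_i))$ as two of the vertices and search for the third by pushing a candidate point upwards from the centre of the top edge of $\mathcal T$. Let $\mu := (s_0+s_1)/2$ and, for $\tau\ge 0$, set $R_\tau := (\mu, c+\tau)$. Note that $R_0\in \mathcal T \subset \Omega$ because $\mu \in (s_0, s_1)$ and
\[
c-l(\mu)=\tfrac12 \bigl((c-l(s_0)) + (c-l(s_1))\bigr) \ge \tfrac13.
\]
Let $\T_\tau$ denote the closed triangle of vertices $P_0, P_1, R_\tau$ and define
\[
\tau_0 := \sup\bigl\{\tau\ge 0 \, :\, R_\tau \in \Omega \text{ and the relative interiors of } \T_\tau \text{ and of its three sides lie in } \Omega \bigr\}.
\]
At $\tau = 0$ the condition holds since $\T_0 \subset \cl(\mathcal T)$ with all relevant interiors contained in $\mathcal T\subset\Omega$, so by openness of $\Omega$ we get $\tau_0 > 0$. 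For $\tau < \tau_0$ the inscribed disk of $\T_\tau$ sits inside $\T_\tau^\circ\subset \Omega$, hence has radius at most $r(\Omega)=1$; since the inradius of $\T_\tau$ tends to $(s_1-s_0)/2 \ge \tfrac32 >1$ as $\tau\to\infty$ (by a direct computation with the inradius formula), this forces $\tau_0 < \infty$.

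Set $K := ([P_0, R_{\tau_0}] \cup [P_1, R_{\tau_0}]) \cap \partial \Omega$. Using monotonicity of the nested family $\{\T_\tau\}_{\tau\ge0}$ and continuity of barycentric coordinates in $\tau$, every point of $\T_{\tau_0}^\circ$ belongs to $\T_\tau^\circ$ for some $\tau < \tau_0$, and hence to $\Omega$; therefore $\T_{\tau_0}^\circ \subset \Omega$. If $K$ were equal to $\{P_0, P_1\}$, then the defining condition would already hold at $\tau_0$ and, by openness, at some $\tau>\tau_0$, contradicting the supremum. Choose $S \in K \setminus \{P_0, P_1\}$ minimising the distance to $\{P_0, P_1\}$ (it is well-defined since $P_0$ and $P_1$ are isolated in $K$, because near each of them the segments enter $\mathcal T\subset\Omega$), and assume, without loss of generality, that $S \in [P_0, R_{\tau_0}]$. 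I claim that the triangle $\T$ with vertices $P_0, P_1, S$ is inscribed in $\Omega$: the interior of $[P_0, S]$ is a subsegment of $[P_0, R_{\tau_0}]$ disjoint from $K$ by minimality of $S$; $[P_0, P_1]$ lies on $l$ inside $\mathcal T$; and the interior of $[P_1, S]$ together with $\T^\circ$ are both contained in $\T_{\tau_0}^\circ \subset \Omega$.

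Finally the fatness $\alpha(\T)\ge\arctan(1/47)$ and the area bound $A(\T)\ge 9/4420$ reduce to direct computations. Since $S$ lies on the segment from $P_0$ to $R_{\tau_0}$, the angle $\hat P_0$ of $\T$ equals the fixed angle between $\overline{P_0 P_1}$ and $\overline{P_0 R_{\tau_0}}$, which one bounds above and below using $|s_1-s_0|\in[3,6]$, $c-l(\mu)\ge\tfrac13$, and the upper bound on $c+\tau_0-l(\mu)$ extracted from the inradius constraint above. The angle $\hat P_1$ is bounded below by $\arctan(c/|s_1-s_0|)$ (using $S_y \ge c$ and $|s_1-S_x|\le |s_1-s_0|$), and $\hat S$ is controlled by the complement. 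The area is at least $\tfrac12|P_0 P_1|$ times the perpendicular distance from $S$ to $l$, which is bounded below by $(c-l(\mu))/\sqrt{1+m_l^2}$ with $m_l$ the slope of $l$; these inequalities yield the claim with considerable slack. I expect the delicate point to be the verification that $\T$ is inscribed when $S$ is strictly interior to $[P_0, R_{\tau_0}]$: the new chord $[P_1, S]$ is not directly produced by the sweep, and controlling its interior requires precisely the preliminary step $\T_{\tau_0}^\circ \subset \Omega$.
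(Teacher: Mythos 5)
Your sweep-and-select strategy is genuinely different from the paper's (which pushes a \emph{horizontal segment} $[s_0+\tfrac13,s_1-\tfrac13]\times\{t\}$ upward to a maximal height $d$, forms the big triangle $P_0P_1Q$ with $Q$ at height $d$, and takes as third vertex the point of $\partial\Omega$ inside that triangle with \emph{minimal $y$-coordinate} among those at height $\ge c$ — a choice for which the inscribedness check is a one-line convexity argument). Your version has a concrete gap at the selection of $S$. You pick $S\in K\setminus\{P_0,P_1\}$ minimising the distance to the \emph{pair} $\{P_0,P_1\}$ and then claim the interior of $[P_0,S]$ misses $K$ ``by minimality''. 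But minimality only says that every $T\in K\setminus\{P_0,P_1\}$ satisfies $\min(|TP_0|,|TP_1|)\ge \min(|SP_0|,|SP_1|)$. If $\min(|SP_0|,|SP_1|)=|SP_1|<|SP_0|$, a point $T$ of the open segment $(P_0,S)$ with $|SP_1|\le|TP_0|<|SP_0|$ is not excluded from $K$, and then $P_0P_1S$ is not inscribed. The fix is to minimise the distance to the relevant \emph{endpoint separately on each swept segment} (take the point of $K\cap[P_0,R_{\tau_0}]\setminus\{P_0\}$ nearest to $P_0$, or the analogous point on the other segment), and to treat the degenerate case $S=R_{\tau_0}$ separately, since there $[P_1,S]$ is an edge of $\T_{\tau_0}$, not a chord, so your appeal to $\T_{\tau_0}^\circ\subset\Omega$ does not cover its relative interior.

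The second issue is that the quantitative conclusion is asserted, not proved, and it cannot be outsourced to the paper's Lemma \ref{lem:box}: that lemma needs the apex to sit at height at most $2$ above the upper base vertex, which the paper obtains because its swept region contains a rectangle of width $|s_1-s_0|-\tfrac23>2$, forcing $d\le l(s_1)+2$ via $r(\Omega)=1$. Your swept region is a triangle narrowing to a point, so the only constraint is the inradius of $\T_\tau$, which for a base of length $3$ permits the apex to rise to height about $3.6$; hence $|y_2-y_1|\le 2$ can fail for your $S$. The angle and area bounds very likely still hold (the apex angle estimate only degrades from $\arctan(3/14)$ to roughly $\arctan(3/16)$, far above $\arctan(1/47)$), but this requires (a) turning your qualitative inradius argument for $\tau_0<\infty$ into an explicit upper bound on $c+\tau_0-l(\mu)$, and (b) redoing the three angle estimates of Lemma \ref{lem:box} with that bound and with the slope control $\frac{|l(s_1)-l(s_0)|}{|s_0-s_1|}\le\frac{2}{|s_0-s_1|-2}$ (which you also need to establish, as the paper does, by fitting a square in the trapezoid). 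As it stands, neither computation is carried out, so the specific constants $\arctan(1/47)$ and $9/4420$ — which are load-bearing in Proposition \ref{prop:cases} and the main theorem — are not verified.
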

\begin{proof}
  Set $d\coloneqq \sup\{ t\ge c \ : \  \{t \} \times [s_0+\frac13,s_1-\frac13]\subset  \Omega\}$ (this set is non-empty because by \eqref{eq:trapez inside} it holds $\{c \} \times (s_0,s_1)\subset \Omega$). This supremum is finite because $\Omega$ is bounded. By maximality  there exists a point $Q=(x_Q,d)\in \partial \Omega\cap ([s_0+\frac13,s_1-\frac13]\times \{d\})$. Consider the closed triangle $\T_0$ of vertices $P_0,P_1,Q$ (see Figure \ref{fig:scheme}). Then by \eqref{eq:trapez inside} and since $x_Q\in (s_0,s_1)$ we have
  \begin{equation}\label{eq:T0 below c}
      \T_0\cap \partial \Omega\cap \{y < c\}=\{P_0,P_1\},
  \end{equation}
  indeed $\T_0\subset \{y\ge l(x)\}$, because $P_0,P_1,Q\in \{y\ge l(x)\}$. Define the closed set
  \[
  K\coloneqq    \T_0\cap \partial \Omega\cap \{y \ge c\},
  \]
 which is non-empty because $Q\in K.$ Then there exists $P_2=(x_2,y_2)\in K$ which minimizes the $y$-coordinate in $K$, i.e.\
 \begin{equation}\label{eq:minimum y}
      y_2=\min\{y \ : \ (x,y)\in K\}\in [c,d].
 \end{equation}
 Consider now the triangle $\T$ of vertices $P_0,P_1,P_2$ (see Figure \ref{fig:scheme}). In particular $\T\subset \T_0,$ because $P_0,P_1,P_2 \in \T_0.$ Therefore  we have
 \begin{equation}\label{eq:T above}
      \T\cap \partial \Omega \cap \{y<c\}\subset  \T_0\cap \partial \Omega \cap \{y<c\} \overset{\eqref{eq:T0 below c}}{=}\{P_0,P_1\}. 
 \end{equation}
 Moreover 
 \begin{equation}\label{eq:T below}
      (\T\setminus \{P_2\})\cap \partial \Omega \cap \{y\ge c\}\subset  \T_0\cap \partial \Omega \cap \{c\le y<y_2\} =K\cap \{y<y_2\}=\emptyset,
 \end{equation}
 where the first inclusion follows observing that $P_0,P_1\in \{y< y_2\}$, $P_2 \in \{y=y_2\}$ and by convexity, while the  last equality follows from \eqref{eq:minimum y}. Combining \eqref{eq:T below} and \eqref{eq:T above} we obtain that $\T$ is inscribed in $\Omega.$ Indeed $\T$ intersects $\partial \Omega$ exactly at the vertices and also intersects $\Omega$, therefore by convexity  $\T$ minus its vertices must be contained in $\Omega.$
  \begin{figure}[!ht]
\centering
    \includegraphics[width=6cm, height=6cm]{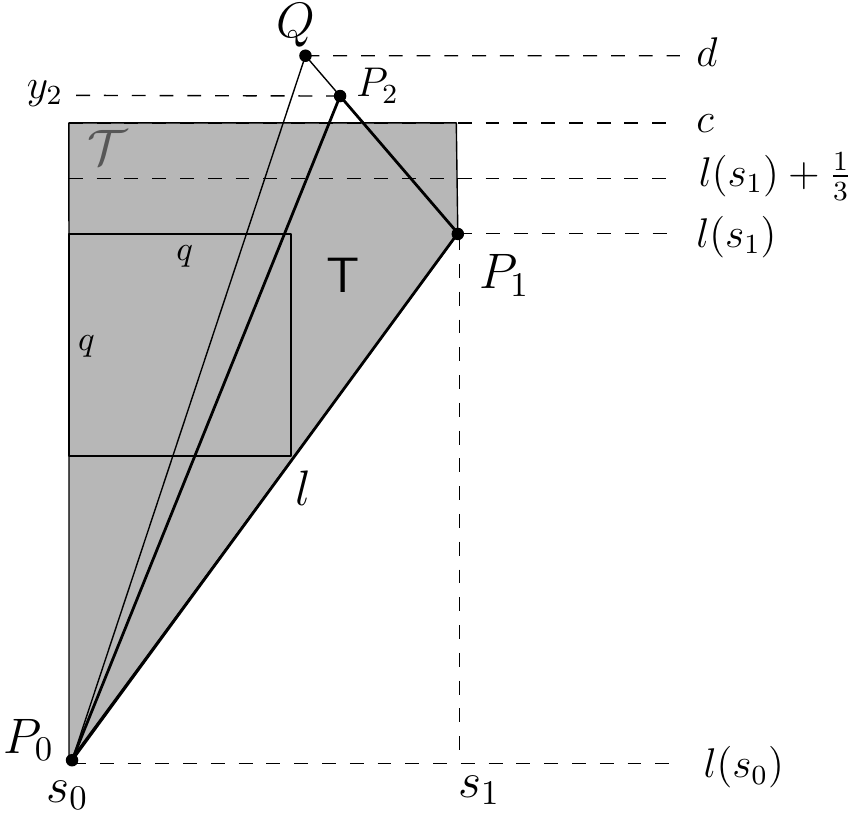}
\caption{Illustration of the construction in the proof of Lemma \ref{lem:general box}. The trapezoid region $\mathcal T$ defined in the statement is depicted in grey}
\label{fig:scheme}
\end{figure}
It remains to prove that $\alpha(\T)\ge \arctan\left(\frac{1}{47}\right)$ and the lower bound on the area $A(\T).$ We aim to apply Lemma \ref{lem:box} to the points $P_i\coloneqq (x_i,y_i)\coloneqq (s_i,l(s_i))$, $i=0,1$  and $P_2=(x_2,y_2)$. By symmetry we can assume that $l(s_0)\le l(s_1).$ Therefore by construction we have that $x_0\le x_2\le x_1 $ and $y_0\le y_1\le y_2\le d$, hence it remains only to check the assumptions in \eqref{eq:box bounds}. The first in \eqref{eq:box bounds} is true because by assumption $3\le |s_0-s_1|\le 6$. Next note that  by definition of $d$ and by assumption \eqref{eq:trapez inside} the rectangle $(s_0+\frac13,s_1-\frac13)\times (l(s_1),d)$ is contained in   $\Omega$. Hence it must hold $ d\le l(s_1)+2$, otherwise this rectangle would have both sides strictly greater than 2  (recall that $|s_0-s_1|\ge 3)$, which would contradict $r(\Omega)=1$. Moreover by assumption $l(s_1)\le c-\frac13\le y_2-\frac13$. Combining the last two observation we get $1/3\le |y_2-l(s_1)|\le 2$, which proves the second in \eqref{eq:box bounds}. For the last in \eqref{eq:box bounds} we need to verify that $\frac{|l(s_1)-l(s_0)|}{|s_0-s_1|}\le \frac{2}{|s_0-s_1|-2}$, $i=0,1.$ The idea is that if $|l(s_1)-l(s_0)|$ is too big, then $P_0$ is much below $P_1$ and  we can fit inside $\Omega$ a square of side strictly bigger than 2  which would contradict $r(\Omega)=1$ (see Figure \ref{fig:scheme}). More precisely observe that the slopes of $l$ coincides with $m\coloneqq \frac{|l(s_1)-l(s_0)|}{|s_0-s_1|}$. Then by elementary geometric considerations we deduce that the trapezoid $\mathcal T\subset \Omega$, defined in the statement, contains an (open) square of side $q$, where $q$ satisfies $m=\frac{q}{|s_0-s_1|-q}$ (see Figure \ref{fig:scheme}). Since $r(\Omega)=1$ it must be $q\le 2$ and so
  \[
  \frac{|l(s_1)-l(s_0)|}{|s_0-s_1|}=m=\frac{q}{|s_0-s_1|-q}\le \frac{2}{|s_0-s_1|-2},
  \]
which is what we wanted. Finally, since $\overline{P_0P_1}\ge |s_0-s_1|\ge 3$ and $\alpha(T)\ge \arctan\left(\frac{1}{47}\right)$, we have the following lower bound for the area of $\T$
\[
A(\T)=\frac{(\overline{P_0P_1})^2 \sin \hat P_0  \sin \hat P_1}{2\sin \hat P_2}\ge \frac92 \sin\left(\arctan\left(\frac{1}{47}\right)\right)^2=\frac{9}{4420}.
\]
The proof is concluded.
\end{proof}

The next and last preliminary result says  that if a rectangle contained in $\cl( \Omega)$ intersects $\partial \Omega$ at both its short sides and at two points, which are not almost the endpoints of a diagonal, then we can find a sufficiently fat triangle inscribed in $\Omega$ (in a  quantified way). The proof will make use of both  Lemma \ref{lem:trick set} and Lemma \ref{lem:general box}.
\begin{lemma}[Good rectangle implies fat triangle]\label{lem:easy case}
	Let $\Omega\subset \rr^2$ be an open and bounded set with $r(\Omega)=1$. Suppose that $(-A,A)\times[-a,a] \subset \Omega$ with
	$A\ge 2$ and $a\ge 1/3$. Suppose also that $\partial \Omega \cap \{A\}\times [-a,a-1/3]\neq \emptyset$ and $\partial \Omega \cap \{-A\}\times [-a,a-1/3]\neq \emptyset$. Then $\Omega$ has an inscribed triangle $\T$ with $\alpha(\T)\ge \arctan\left(\frac{1}{47}\right) $ and $A(\T)\ge \frac{9}{4420}.$
\end{lemma}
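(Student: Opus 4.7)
The plan is to set up the hypotheses of Lemma \ref{lem:trick set} so that the interval $[s_0,s_1]$ and line $\tilde l$ it produces immediately satisfy the hypotheses of Lemma \ref{lem:general box}. Let $P=(-A,y_P)$ and $Q=(A,y_Q)$ be the points on $\partial\Omega$ guaranteed by the hypothesis, so $y_P,y_Q\in[-a,a-\tfrac13]$. Let $l$ be the line through $P$ and $Q$, and define
\[
C\coloneqq \partial\Omega\cap\big([-A,A]\times(-\infty,a-\tfrac13]\big),
\]
which is closed and bounded. I would apply Lemma \ref{lem:trick set} with $[x_0,x_1]=[t_0,t_1]=[-A,A]$, distinguished points $P,Q$ (which lie on $l$, hence on-or-above $l$), and $\lambda=3$; this is permissible since $|t_0-t_1|=2A\ge 4$.

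Checking the hypotheses of Lemma \ref{lem:trick set}: for $t=\pm A$ the set $C$ contains $P$ or $Q$, and for $t\in(-A,A)$ the slice $\Omega\cap\{x=t\}$ is open, bounded, and contains $[-a,a]$, so its lower endpoint is a point of $\partial\Omega$ at height $y<-a\le a-\tfrac13$, lying in $C$. The strict form of condition \ref{it:below} holds because $(-A,A)\times[-a,a]\subset\Omega$ forbids $\partial\Omega$ from this closed rectangle, so every point of $C\cap\{-A<x<A\}$ has $y<-a$; meanwhile $l(x)\ge \min(y_P,y_Q)\ge -a$ on $[-A,A]$, yielding $y<l(x)$ strictly. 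The strict version of Lemma \ref{lem:trick set} then produces $[s_0,s_1]\subset[-A,A]$ with $|s_0-s_1|\in[3,6]$ and a line $\tilde l$ such that $(s_i,\tilde l(s_i))\in\partial\Omega$, $\tilde l(s_i)\le a-\tfrac13$, and $C\cap\{s_0<x<s_1\}\subset \tilde l_<$.

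To conclude via Lemma \ref{lem:general box} with $c=a$, the requirement $c\ge \tilde l(s_i)+\tfrac13$ is automatic. The only remaining point is that the trapezoid $\mathcal T\coloneqq\{(x,y):x\in(s_0,s_1),\ \tilde l(x)\le y\le a\}$ is contained in $\Omega$. Each vertical slice of $\mathcal T$ splits into the ``upper part'' $\{x\}\times[\max(\tilde l(x),-a),a]$, which sits inside the rectangle $(-A,A)\times[-a,a]\subset\Omega$; and, when $\tilde l(x)<-a$, a ``lower part'' $\{x\}\times[\tilde l(x),-a]$, a connected vertical segment ending at $(x,-a)\in\Omega$. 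By the strict conclusion, any point of $\partial\Omega$ with abscissa in $(s_0,s_1)$ and ordinate $\le a-\tfrac13$ must have $y<\tilde l(x)$, so this lower segment is disjoint from $\partial\Omega$; being connected with one endpoint in $\Omega$, it lies entirely in $\Omega$. Hence $\mathcal T\subset\Omega$, and Lemma \ref{lem:general box} delivers a triangle $\T$ inscribed in $\Omega$ with $\alpha(\T)\ge \arctan(\tfrac{1}{47})$ and $A(\T)\ge \tfrac{9}{4420}$.

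I expect the delicate step to be precisely this last verification: without the strict version of Lemma \ref{lem:trick set} one cannot exclude $\partial\Omega$ from the base $y=\tilde l(x)$ of the trapezoid, and without the connectedness argument one cannot handle the possibility that $\tilde l$ dips below $-a$, so that $\mathcal T$ extends beneath the initial rectangle into a region whose relation to $\Omega$ is \emph{a priori} unknown. The remaining verifications (vertical slices, strict inequality of $l$ over $C$, and length bound for $\lambda$) are elementary consequences of the rectangle being contained in $\Omega$ and of $A\ge 2$.
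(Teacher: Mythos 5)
Your proposal is correct and follows essentially the same route as the paper: the same set $C$, an application of Lemma \ref{lem:trick set} with $\lambda=3$ on $[-A,A]$, and a verification that the resulting trapezoid lies in $\Omega$ (via the strict conclusion and a vertical-segment argument) before invoking Lemma \ref{lem:general box} with $c=a$. The only difference is cosmetic: you take $l$ to be the line through the two given boundary points rather than the horizontal line $\{y=-a\}$ used in the paper, and both choices satisfy the hypotheses of Lemma \ref{lem:trick set} for the same reason.
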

\begin{proof}
We aim to apply first Lemma \ref{lem:trick set} and then Lemma \ref{lem:general box}.
Consider the closed set 
\[
C\coloneqq [-A,A] \times (-\infty,a-1/3]\cap \partial \Omega.
\]
Consider the horizontal line  $l\coloneqq \{y=-a\}$. By assumptions $C$ contains two points $(-A,y_0)$ and $(A,y_1)$ with $y_0,y_1 \in [-a,a-1/3]$. Moreover, since $(-A,A)\times \{-a\} \subset \Omega$ and $\Omega$ is bounded, $C$ must intersect every vertical line $\{x=t\}$ , $t \in (-A,A)$. Indeed for every $t \in (-A,A)$ we can take $(t,-a)\in \Omega$ and decrease the second coordinate until we hit $\partial \Omega$ in a point which must lie in $C.$  
Finally the assumption  $(-A,A)\times[-a,a] \subset \Omega$ also implies that the set $C$ lies strictly below $l$ in the strip $\{-A<x<A\}$, that is $C\cap \{-A<x<A\}\subset \{y<-a\}$. Hence applying Lemma \ref{lem:trick set} taking $\lambda=3$ (which is admissible since $2A\ge 3$) we obtain $s_0,s_1 \in [-A,A]$ with $3\le |s_0-s_1|\le 6$ and a line $y=\tilde l (x)$ satisfying $ P_i\coloneqq (s_i,\tilde l(s_i))\in C\subset \partial \Omega$, $i=0,1,$ and such that $C$ lies strictly below $\tilde l$ in $(s_0,s_1)$,  i.e.\ 
\begin{equation}\label{eq:C strictly below}
    \{ (x,y) \in C \ : \ x\in (s_0,s_1),\quad y\ge  \tilde l(x)\}=\emptyset.
\end{equation}
We stress that it could very well be that  $s_0=-A$ or $s_1=A.$
 We claim that
\begin{equation}\label{eq:T inside in easy case}
     \mathcal T\coloneqq \{(x,y) \ : \ x \in (s_0,s_1), \, \tilde l(x)\le y \le a \}\subset \Omega.
\end{equation}
If this was true an application of Lemma \ref{lem:general box} would yield the result. Indeed all the hypotheses would be satisfied   noting that, by definition of $C$ and since $(s_i,\tilde l(s_i))\in C,$ we have  $a-\frac13\ge \tilde l(s_i)$ for $i=0,1.$ 

We prove \eqref{eq:T inside in easy case} by contradiction. Suppose  that  $\mathcal T\cap(\rr^2\setminus  \Omega) \neq \emptyset$ and let $P=(x_P,y_P)\in \mathcal T\cap(\rr^2\setminus  \Omega) $. Since $x_P \in (-A,A)$,  the assumptions yield that $\{x_P\}\times [-a,a]\subset \Omega,$ which gives that $(x_p,-a)\in \Omega$ and that $y_P<-a$ (indeed by definition of $\mathcal T$ we have $y_P\le a$). Therefore the vertical segment $\{x_P\}\times [y_P,-a]$ intersects $\partial \Omega$, because it has one endpoint in $\Omega$ and one in $\rr^2\setminus\Omega$.  Hence, being  contained in $ [-A,A] \times (-\infty,a-1/3],$ it  also intersects  $C$. However by definition of $\mathcal T$ it holds $y_P\ge \tilde l(x_P)$, which contradicts \eqref{eq:C strictly below} and concludes the proof.
\end{proof}

We can finally prove the second  main proposition that was used to deduce Theorem \ref{thm:main}.
\begin{proof}[Proof of Proposition  \ref{prop:cases}]
The proof will be divided in several cases. In all except the last one we will find an inscribed triangle $\T$ satisfying $i)$ of the statement, while in the very last case we will find a rectangle $\tilde {\mathcal R}\subset  \Omega$ as in point $ii)$.

By hypotheses there exists a closed rectangle $\mathcal R\subset \Omega$ of sides-length $2a\times 2A$, with $a\in [2/3,1]$ and $A\ge 13.$ 
	Up to rotating and translating $\Omega$ we can assume that  ${\mathcal R}=[-A,A]\times[-a,a].$ Moreover, up to enlarging $A$ we can assume that 
 \begin{equation}\label{eq:R new assumptions}
      (-A,A)\times [-a,a]\subset \Omega, \quad \{-A\}\times [-a,a]\cap \partial \Omega \neq \emptyset, \quad \{A\}\times [-a,a]\cap \partial \Omega \neq \emptyset,
 \end{equation}
 but giving up the property that $\mathcal R\subset \Omega$ (which can not, and never will,  be used again in the rest of the proof). Indeed  we can replace $\mathcal{R}$  with the rectangle $[-B',B]\times[-a,a],$ where $B\coloneqq \sup \{ t \ : \ [-A,t]\times[-a,a]\subset \Omega\}$ and $B'\coloneqq \sup \{ t \ : \ [-t,A]\times[-a,a]\subset  \Omega\}$ (and then perform another translation of $\Omega$ to obtain a rectangle centred at the origin).

\noindent {\color{blue} \sc Case 1a}: Suppose that 
\begin{equation}\label{eq:c1a}
	\partial \Omega \cap \{-A\}\times [-a,a/2]\neq \emptyset, \quad \partial \Omega \cap \{A\}\times [-a,a/2] \neq \emptyset.
\end{equation}
Since $a/2\ge 1/3 $, $A\ge 2 $ and by the first in \eqref{eq:R new assumptions}, we can apply Lemma \ref{lem:easy case} to the rectangle $\mathcal R$ and deduce that  $\Omega$ has an inscribed triangle $\T$ with $\alpha(\T)\ge \arctan\left(\frac{1}{47}\right)$ and area $A(\T)\ge \frac{9}{4420}.$
	
\noindent {\color{blue} \sc Case 1b}: Suppose that 
\begin{equation}\label{eq:c1b}
	\partial \Omega \cap \{-A\}\times [-a/2,a]\neq \emptyset, \quad \partial \Omega \cap \{A\}\times [-a/2,a] \neq \emptyset.
\end{equation}
Up to changing the sign of the $y$-coordinate, this is the same as Case 1A.

\medskip
	
From now on we can assume that neither \textsc{Case 1.A} nor \textsc{Case 1.B} applies, i.e.\ we can assume that at least one of the intersections in \eqref{eq:c1a} and \eqref{eq:c1b} is empty. This means that either the first intersection in \eqref{eq:c1a} and the second in \eqref{eq:c1b} are empty, or vice versa (see Figure \ref{fig:case1.5}). Indeed it is not possible for only the first or only the second intersection in both \eqref{eq:c1a} and \eqref{eq:c1b} to be empty because their respective unions is non-empty thanks to \eqref{eq:R new assumptions}. Up to changing the sign of the $x$-coordinate we can assume from now on that:
\begin{equation*}
    \parbox{12cm}{
the first in \eqref{eq:c1a} and the second in \eqref{eq:c1b} fail, which  implies (by \eqref{eq:R new assumptions}):
\begin{equation}\label{eq:c2}
	\partial \Omega \cap \{-A\}\times [a/2,a]\neq \emptyset, \quad \partial \Omega \cap \{A\}\times [-a,-a/2]\neq \emptyset.
\end{equation}
}
\end{equation*}
Note that this assumption (and in particular \eqref{eq:c2}, which is the property that will be actually used in the sequel) remains true if $\Omega$ is rotated of $\pi$-degrees around the origin.

\begin{figure}[!ht]
\centering
\includegraphics[width=14cm, height=3cm]{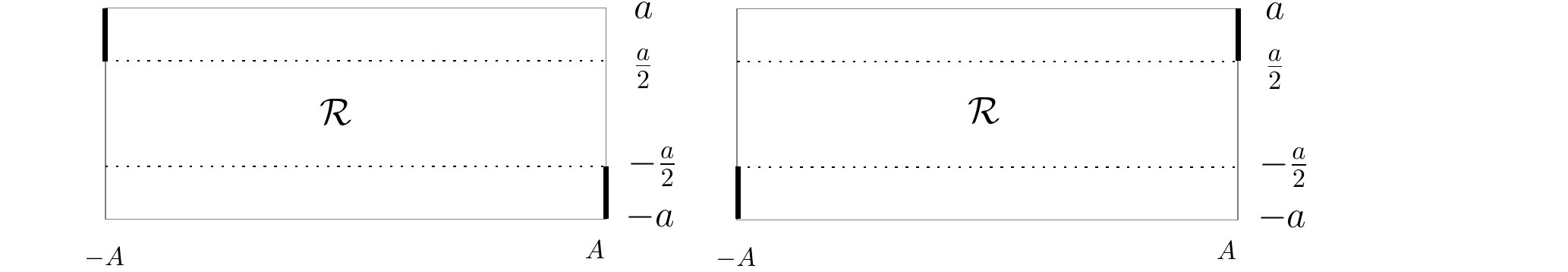}
\caption{Illustration of the two scenarios not covered by \textsc{Case 1.A} or \textsc{Case 1.B}. Note that they are the same, up to changing the sign of the $x$-coordinate. The segments highlighted with thicker lines intersect $\partial\Omega$. The image on the left corresponds to  \eqref{eq:c2}. Confront this picture also with the previous Figure \ref{fig:idea}}
\label{fig:case1.5}
\end{figure}
 Consider  the rectangles 
\[
\mathcal{R}_1\coloneqq  [-A_1,A] \times [-a,0], \quad  \mathcal{R}_2\coloneqq  [A,A_2] \times [0,a],
\]
where $A_i$, $i=1,2$ are defined by
\[
A_1\coloneqq \sup \{ t\ \ : \ [-t,A] \times [-a,0] \subset \Omega\}, \quad A_2\coloneqq \sup \{ t\ \ : \ [-A,t] \times [0,a] \subset \Omega\}
\]
(see Figure \ref{fig:case2}). Recall that $[-A,0]\times [-a,0]\subset \Omega$ and $[0,A]\times [-a,0]\subset \Omega$, by the first in \eqref{eq:R new assumptions} and because we are assuming that  the first in \eqref{eq:c1a} and the second in \eqref{eq:c1b} fail. In particular $A_i$, $i=1,2$ are both finite and   $A_i> A,$ $i=1,2$.
In particular $\mathcal{R}_i\subset \cl({\Omega})$ and by maximality
\begin{equation}\label{eq:sideinter}
	\{-A_1\}\times[-a,0]\cap \partial \Omega \neq \emptyset,\quad 	\{A_2\}\times[0,a]\cap \partial \Omega \neq \emptyset.
\end{equation}

\noindent {\color{blue} \sc Case 2}:  Suppose that at least one of the following holds
\begin{equation}\label{eq:case2}
\{-A_1\}\times[-a,-a/2]\cap \partial \Omega\neq \emptyset,\quad  	\{A_2\}\times[a/2,a]\cap \partial \Omega\neq \emptyset.
\end{equation}
 Up to rotate $\Omega$ of $\pi$-degrees around the origin (this is allowed because the current assumption \eqref{eq:c2} remains true after this transformation) and exchanging the values of $A_1$ and $A_2,$ we can assume $\{-A_1\}\times[-a,-a/2]\cap \partial \Omega\neq \emptyset$ (see Figure \ref{fig:case2}). Moreover, by the second in \eqref{eq:c2} we
have $\{A\}\times [-a,-a/2]\cap  \partial \Omega \neq \emptyset$. Applying Lemma \ref{lem:easy case} to the rectangle $\mathcal R_1$ (again noticing that $a/2\ge 1/3 $) we deduce that $\Omega$ has an inscribed triangle $\T$ with $\alpha(\T)\ge \arctan\left(\frac{1}{47}\right)$ amd $A(\T)\ge \frac{9}{4420}.$

\begin{figure}[!ht]
\centering
\includegraphics[width=14cm, height=3.5cm]{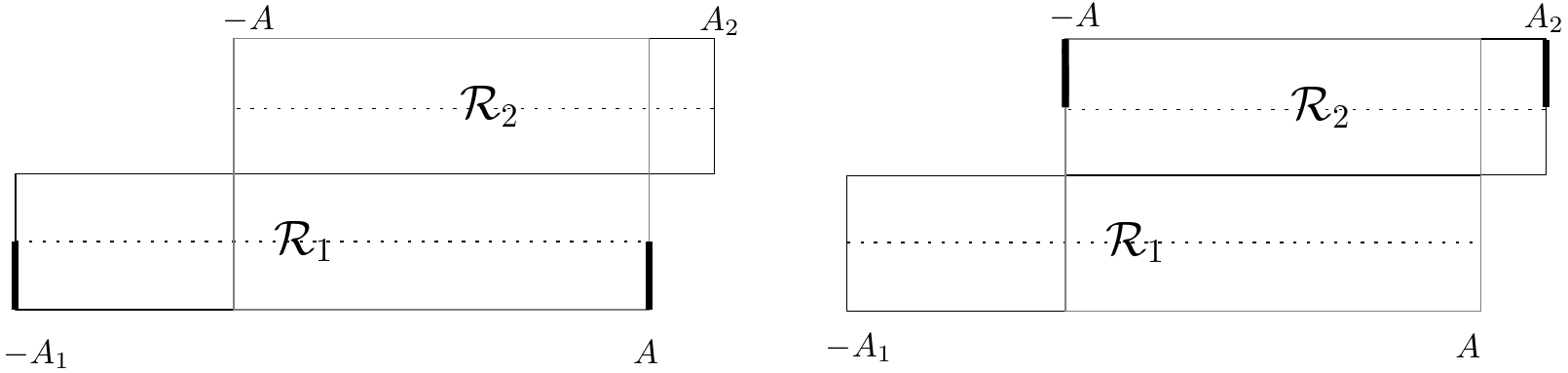}
\caption{Illustration of the two scenarios in \textsc{Case 2}. The segments highlighted with thicker lines intersect  $\partial\Omega$. Note that the two pictures are the same, up to a rotation of $\pi$-degrees around the origin and exchanging the values of $A_1,A_2$}
\label{fig:case2}
\end{figure}

\noindent {\color{blue} \sc Case 3}:  Suppose, contrary to \eqref{eq:case2}, that
\begin{equation}\label{eq:c3}
	\{-A_1\}\times[-a,-a/2]\cap \partial \Omega=\emptyset, \quad 	\{A_2\}\times[a/2,a]\cap \partial \Omega=\emptyset.
\end{equation}
This, recalling \eqref{eq:sideinter}, implies that 
\begin{equation}\label{eq:c3bis}
 \{-A_1\}\times[-a/2,0]\cap \partial \Omega\neq \emptyset  , \quad  \{A_2\}\times[0,a/2]\cap \partial \Omega\neq \emptyset
\end{equation}
(see Figure \ref{fig:case3}).
\begin{figure}[!ht]
\centering
\includegraphics[width=8.5cm, height=3.5cm]{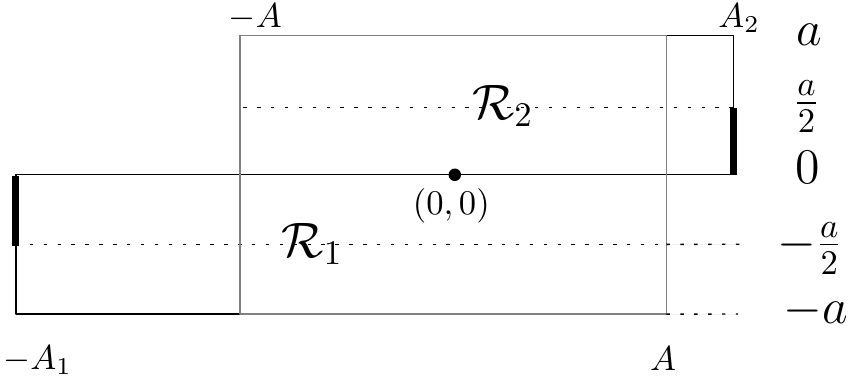}
\caption{The situation  in \textsc{Case 3}. The segments highlighted with thicker lines intersect  $\partial\Omega$. Observe that, up to swapping the values of $A_1$ and $A_2$, the picture is unchanged by a rotation of $\pi$-degrees around the origin}
\label{fig:case3}
\end{figure}

Let $y=l(x)$, $y=m(x)$ be the two lines (in Cartesian coordinates) respectively satisfying $l(A_2)=0$, $l(A-3)=-a$ and $m(-A_1)=0$, $m(-A+3)=a,$ that is:
\begin{equation}\label{eq:equations l and m}
    l(x)\coloneqq \frac{a(x-A_2)}{A_2-A+3}, \quad m(x)\coloneqq \frac{a(x+A_1)}{A_1-A+3}.
\end{equation}
Let $T_l,T_m$ be the (closed) triangles enclosed respectively by the lines $l$, $\{y=-a\}$, $\{x=-A_1\}$ and $m$, $\{y=a\}$, $\{x=A_2\}$ (see Figure \ref{fig:case3bis}). Note that, since $A\ge 2$, we have both $A-3>-A$ and $-A+3< A$ hence both  the lines $l$ and $m$ intersect respectively the interior of the bottom and top horizontal sides of $\mathcal R$ (as illustrated in Figure \ref{fig:case3bis}) 

\begin{figure}[!ht]
\centering
\includegraphics[width=11cm, height=5cm]{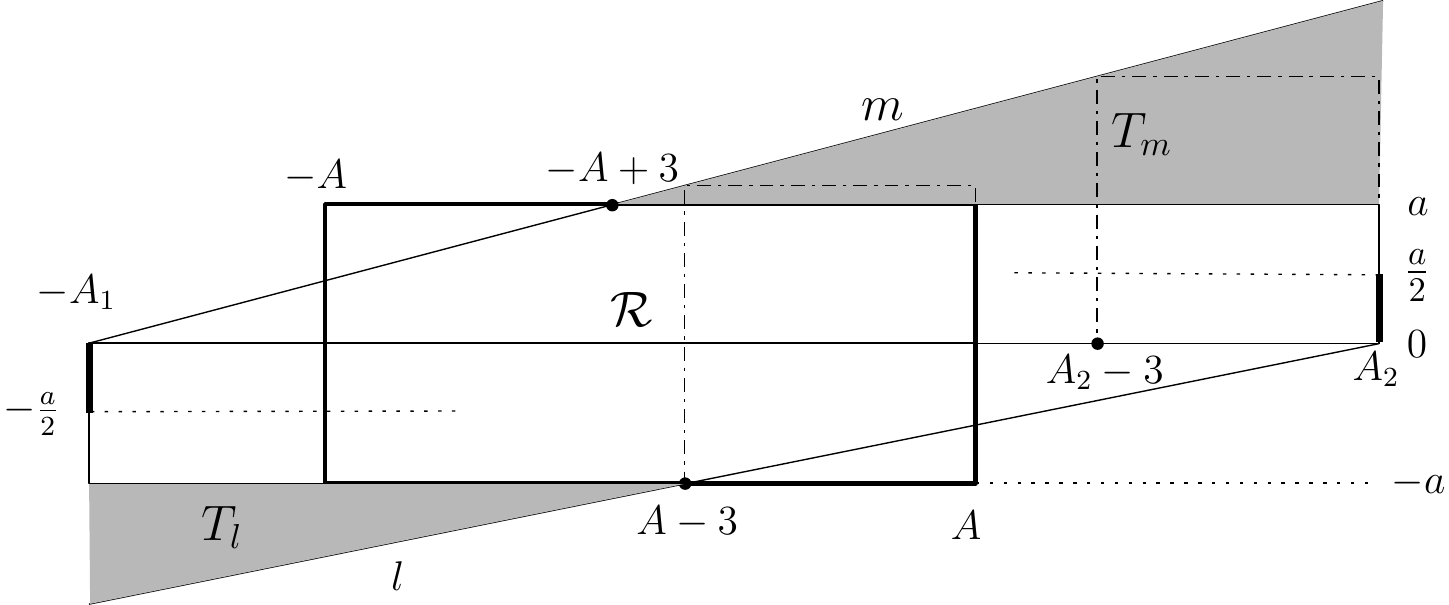}
\caption{If at least one of the triangles $T_l, T_m$ intersects $\partial \Omega$ we are in \textsc{Case 3a}, otherwise  in \textsc{Case 3b}. Note that if $T_m$ does not intersect $\partial \Omega$, then $A_1$ can not be much smaller than $A_2$ and $A$, otherwise we can fit large rectangles inside $\Omega$  as in the picture, contradicting $r(\Omega)=1$ (the same  for $T_l$ and $A_2$). This observation will be  used in the proof of \eqref{eq:superbad}}
\label{fig:case3bis}
\end{figure}
We distinguish two subcases.

\noindent {\color{blue} \sc Case 3a}: at least one of the triangles $T_l, T_m$ intersects $\partial \Omega$. Recall that the assumption of \textsc{Case 3} are preserved by a rotation of $\pi$-degrees around the origin and up to swapping the values of $A_1$ and $A_2$  (see Figure \ref{fig:case3}). Observe also that a rotation of $\pi$ degrees maps the line $m$ to $l$ (and vice-versa) after swapping the values of $A_1$ and $A_2$. By these observations and since no assumption on the values of $A_1$ and $A_2$ has been made yet, up to a rotation  we can then assume that 
\begin{equation}\label{eq:c3a}
    T_l\cap \partial \Omega\neq \emptyset.
\end{equation}
We aim to apply Lemma \ref{lem:trick set} with the line $l$ and with the closed set $C\coloneqq \partial \Omega \cap \mathcal H$, where
\begin{equation}\label{eq:def H}
    \mathcal H\coloneqq [-A_1,0]\times(-\infty,-\frac a2]\cup [0,A_2]\times(-\infty,\frac a2]
\end{equation}
(see Figure \ref{fig:case3.a}). 
\begin{figure}[!ht]
\centering
\includegraphics[width=12cm, height=6cm]{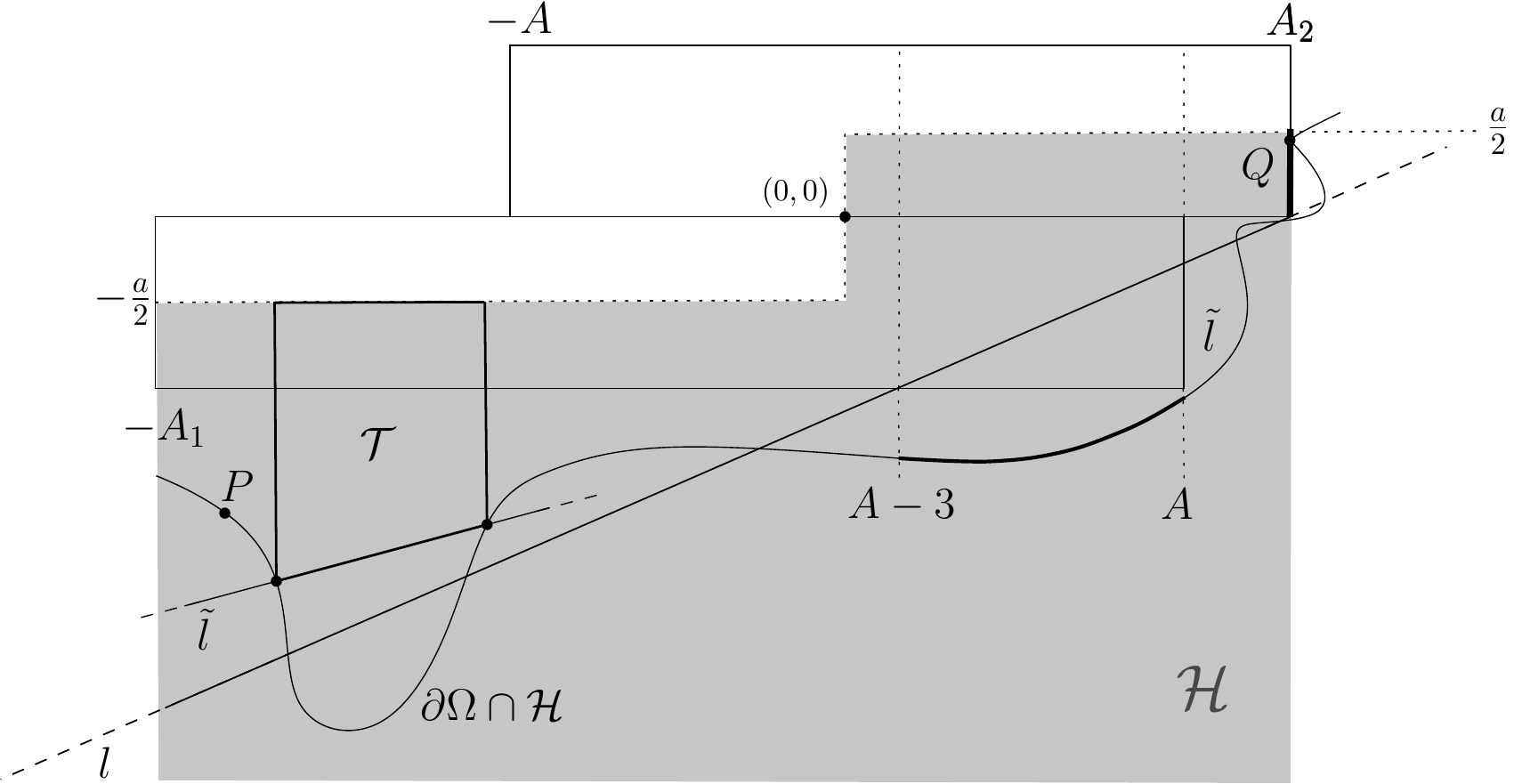}
\caption{The construction in \textsc{Case 3.a}}
\label{fig:case3.a}
\end{figure}
We now check that the hypotheses of Lemma \ref{lem:trick set} are satisfied. By the second in \eqref{eq:c3bis} we have that $C$ contains a point $Q=(A_2,y_1)$ satisfying 
$$a/2\ge y_1\ge 0=l(A_2)$$ (see Figure \ref{fig:case3.a}). Moreover by \eqref{eq:c3a} $C$ also contains a point $P=(x_0,y_0)$ with $x_0\le A-3$ and with $l(x_0)\le y_0\le -a$ (see Figure \ref{fig:case3.a}). Hence assumption $\ref{it:above}$ of Lemma \ref{lem:trick set} is verified. On the other hand $C\cap \{A-3< x<A\}$ is contained in $l_<$ (i.e.\ $C$ lies strictly below $l$), indeed $l\restr{[A-3,A]}\ge -a$, while $C\cap \{A-3<x< A\}\subset \{y< -a\}$ because $(A-3,A)\times [-a,a]\subset  \Omega $ by \eqref{eq:R new assumptions}. Hence assumption $\ref{it:below}$ of Lemma \ref{lem:trick set} is also true. It remains to check assumption $\ref{it:vertical}$, for which it suffices to show  that $C$ intersects every vertical line $\{x=t\}$, $t \in (-A_1,A_2)$. By construction $(-A_1,0)\times\{-a/2\}\subset \Omega$ and $[0,A_2)\times\{a/2\}\subset {\Omega}$. Hence, since $\Omega$ is bounded, for every $x \in (-A_1,A_2)$ there exists at least one $y \in \rr $ with $(x,y)\in \partial \Omega \cap C$ (indeed we can take $(x,y')\in \Omega$ with $y'=a/2$ (or $-a/2$) and decrease continuously $y'$ until we hit $\partial \Omega$). Therefore we apply Lemma \ref{lem:trick set} (with $\lambda=3$) and obtain a line $y=\tilde l(x)$ and an interval $[s_0,s_1]\subset [-A_1,A_2]$ with $3\le |s_0-s_1|\le 6$ with  $(s_i,\tilde l(s_i))\in C\subset \partial \Omega$, $i=0,1,$ and such that $C$ lies strictly below $\tilde l$ in $(s_0,s_1)$, i.e.\  
\begin{equation}\label{eq:C below tilde l}
    C\cap\{s_0< x< s_1\}\subset \tilde l_{<}=\{y< \tilde l(x)\}.
\end{equation}
We stress that it could be that $s_0=-A_1$ or $s_1=A_2.$
To deduce the existence of a fat inscribed triangle we aim  to apply Lemma \ref{lem:general box} to the line $\tilde l$. To do so it remains only to check that 
\begin{equation}\label{eq:trapezioid inside}
     \mathcal T\coloneqq \{(x,y) \ : \ x \in (s_0,s_1), \, \tilde l(x)\le y \le c\}\subset \Omega
\end{equation}
for some $c\ge\tilde  l(s_i)+\frac13,$ $i=0,1.$
First observe that, since $2A\ge 6\ge |s_0-s_1|$, at least one of the following holds
\[
[s_0,s_1]\subset [-A_1,A],\quad [s_0,s_1]\subset  [-A,A_2].
\]
If the  first case  holds we have $\tilde l(s_i)\le -a$ and we take $c\coloneqq -\frac a2$, $i=0,1.$ If the first does not hold, the second does and we have $\tilde l(s_i)\le 0$, $i=0,1,$ so we can take $c\coloneqq \frac a2$. With this choice of $c$ we also have $[s_0,s_1]\times(-\infty,c]\subset \mathcal H$ (recall \eqref{eq:def H}). To show \eqref{eq:trapezioid inside} we can now argue exactly as in the proof of Lemma \ref{lem:easy case}.  Suppose by contradiction that there is a point $R=(x_R,y_R)  \in \mathcal T\setminus  \Omega$, i.e.\  with $x_R\in(s_0,s_1)$ and $y_R \in  [\tilde l(x_R),c]$. 
Then the vertical segment $\{x_R\}\times [\tilde l(x_R),c]$ intersects $\partial \Omega$, because it has one endpoint in $R\in \rr^2\setminus \cl(\Omega)$ and one endpoint in $(x_R,c)\in \Omega$, by our choice of $c$.  Moreover, as observed above, $\{x_R\}\times (-\infty,c]\subset \mathcal H$, therefore $\{x_R\}\times [\tilde l(x_R),c]$ intersects $C$. However this is in contradiction with \eqref{eq:C below tilde l}.
This proves \eqref{eq:trapezioid inside} and so we can apply Lemma \ref{lem:general box} to deduce that $\Omega$ has an inscribed triangle $\T$ with $\alpha(\T)\ge \arctan\left(\frac1{47}\right)$ and $A(\T)\ge \frac{9}{4420}.$

We are left with the following last case.

\noindent {\color{blue} \sc Case 3b}: $(T_m\cup T_l)\cap \partial \Omega=\emptyset$, where the (closed) triangles $T_m,T_l$ were defined immediately before \textsc{Case 3.a} (see also Figure \ref{fig:case3}). This assumption is equivalent to 
\begin{equation}\label{eq:final}
(T_m\cup T_l)\subset \Omega,
\end{equation}
indeed both $T_m$ and $T_l$ intersect $\Omega$ (because they both intersect the interior of the bottom and top horizontal sides of $\mathcal R$ which are contained in $\Omega$ by construction). Observe that, performing a rotation of $\pi$-degrees around the origin,  \eqref{eq:final} remains true, but the values of $A_1$ and $A_2$ are swapped (see also Figure \ref{fig:case3}).  In particular, without loss of generality, we can assume from now on that 
\begin{equation}\label{eq:A2big}
    A_2\ge A_1.
\end{equation}
The assumption \eqref{eq:final}  implies that:
\begin{equation}\label{eq:Q contained}
    \parbox{12cm}{the closed quadrilateral $\mathcal{Q}$ determined by the lines $l,m$ and the vertical lines $v_1\coloneqq \{-A\}\times \rr$, $v_2\coloneqq \{A\}\times \rr$, is contained in $\cl(\Omega)$ (see Figure \ref{fig:case3.b}). Moreover $\mathcal Q\cap \partial \Omega$ is contained in the vertical lines $v_1,v_2.$}
\end{equation}
Indeed $\mathcal Q$ it is contained in $\mathcal R\cup T_m\cup T_l \subset \cl(\Omega)$, by \eqref{eq:final}, and $\mathcal R$ intersects $\partial\Omega$ only on $\{A\}\times \rr$ and $\{-A\}\times \rr$ (by \eqref{eq:R new assumptions}), while $T_m,T_l$ do not intersect $\partial \Omega.$
Moreover, since  we are assuming that $A_2\ge A_1$,  we have that the slope of the line $l$ is not greater than the slope of $m$ (see \eqref{eq:equations l and m} for the expression of $m$ and $l$). In particular the line   $y=\tilde m(x)$ that is parallel to $l$ and such that $\tilde m(-A)=m(-A)$ satisfies $\tilde m(x)\le m(x)$ for every $x\ge -A$. Therefore by \eqref{eq:Q contained}
\begin{equation}\label{eq:P contained}
    \parbox{12cm}{the closed parallelogram $\mathcal P$ determined by $l, \tilde m$  and the vertical lines $v_1,v_2$, is contained in $\cl(\Omega)$ and $\mathcal P\cap \partial \Omega$ is contained in the vertical lines $v_1,v_2$ (see Figure \ref{fig:case3.b}).}
\end{equation}
Indeed $\mathcal P\subset \mathcal Q$.
 The explicit expression for $\tilde m$ can be easily computed to be:
\begin{equation}\label{eq:equation tilde m}
    \tilde m(x)\coloneqq (x+A)\frac{a}{A_2-A+3}+(A_1-A)\frac{a}{A_1-A+3}.
\end{equation}
This formula (and the one for $m$ in \eqref{eq:equations l and m}) also provides an alternative direct way to show that $\tilde m(x)\le m(x)$ for $x\ge -A,$ provided $A_2\ge A_1.$

We are now ready to build the closed rectangle $\tilde {\mathcal R}\subset \Omega$ required by point $ii)$ of the statement. We will actually construct a closed rectangle $\tilde {\mathcal R_0}\subset \cl(\Omega)$, but which intersects $\partial \Omega$ only at the vertices, from which $\tilde {\mathcal R}\subset \Omega $ will be obtained by slightly shrinking $\tilde {\mathcal R_0}$ in the direction of the longer side. 
Let $P_2$ be the point of intersection between $l$ and $v_2$ and $P_1$ be the point of intersection between $\tilde m$ and $v_1$. Let also $Q_1,Q_2$ be the orthogonal projections of $P_1,P_2$ respectively on $l$ and $\tilde m$ (see Figure \ref{fig:case3.b}). We define $\tilde {\mathcal R_0}$ as the closed rectangle of vertices  $P_1,Q_1,P_2,Q_2$. Observe that in Figure \ref{fig:case3.b} the points $Q_1,Q_2$ are inside the strip $(-A,A)\times \rr$, however this requires some justification (see comment after \eqref{eq:a-a upper}), indeed a priori if the lines $\tilde m$ and $l$ are too vertical, it might happen that $Q_1,Q_2$ fall outside this strip. 
To conclude the proof it is sufficient to prove that $\tilde {\mathcal{R}_0}\subset \cl( \Omega)$, that $\tilde {\mathcal{R}_0}\cap \partial \Omega\subset \{P_1,P_2\}$ and that $\tilde {\mathcal{R}_0}$ has sides-length $2\tilde a \times 2\tilde A$ satisfying:
	\begin{equation}\label{eq:key ineq proof}
		\begin{split}
				&  \tilde A-  A>2\sqrt 2(a-\tilde a),\quad 2\tilde a-2a \ge \frac{1}{3\diam(\Omega)} ,\quad \tilde a \le 1.
		\end{split}
	\end{equation}
 Indeed we can then slightly  shrink the $\tilde {\mathcal{R}_0}
 $ along the direction of side of length $2\tilde A$ (keeping the width $2\tilde a$ intact) and obtain a closed rectangle such that $\tilde {\mathcal R}\subset \Omega$  and satisfying $ii)$ of the statement. Indeed if the length of the longer side of $\mathcal R$ is sufficiently close to $2\tilde A$, then the requirement \eqref{eq:Acontrol},  present in $ii)$ of the statement, will be satisfied. Note that the inclusion $\tilde {\mathcal R}\subset \Omega$ is ensured by the fact that $\tilde {\mathcal{R}_0}\cap \partial \Omega\subset \{P_1,P_2\}$.
To show the required properties of $\tilde {\mathcal{R}_0}
 $ we need first to obtain some inequalities relating the quantities $A,A_2$ and $A_1:$
\begin{equation}\label{eq:superbad}
	\begin{split}
			&A_1\le A_2\le 2A_1, \quad A_1,A_2\ge \frac32 A.
	\end{split}
\end{equation}
We start with the first one, where we only need to show that $2A_1\ge A_2,$ since we are assuming $A_2\ge A_1$ (recall \eqref{eq:A2big}). The idea is that, if  $A_2$ is much bigger than $A_1$, then the triangle $T_m$ is too wide and (since is contained in $\cl( \Omega)$) this would contradict $r(\Omega)=1$. More precisely we must have that  $m(A_2-3)\le 2,$ otherwise the rectangle $[A_2-3,A_2]\times[0,m(A_2-3)]$, which is contained in $\cl( \Omega)$, has both sides strictly grater than 2 (see Figure \ref{fig:case3bis}). Therefore recalling the expression of $m$ in \eqref{eq:equations l and m} we get
\[
2\ge m(A_2-3)=\frac{a(A_1+A_2-3)}{A_1-A+3}\ge \frac23\frac{(A_1+A_2-3)}{A_1-1},
\]
where we used that by hypotheses $a\ge \frac23$ and $A\ge 4.$ Rearranging the terms this shows the first in \eqref{eq:superbad}. For the second in \eqref{eq:superbad} the idea is similar.
Indeed we must have that  $m(A-3)\le 2-a,$   otherwise the rectangle  $[A-3,A]\times[-a,m(A-3)]$, which  is contained in $\cl( \Omega)$, would have  both sides strictly grater than 2 (see Figure \ref{fig:case3bis}). Therefore using again the equation for $m$ we get
\[
2-a\ge m(A-3)=\frac{a(A_1+A-3)}{A_1-A+3},
\]
and simplifying 
\[
A\le (1-a)A_1+3\le \frac13 A_1+3 \le \frac13 A_1+\frac A2 .
\]
where we used that by hypotheses $a\ge \frac23$ and $A\ge 6.$ This 
shows that $A_1 \ge \frac32 A$  and thus the second in \eqref{eq:superbad} (recall that $A_2\ge A_1$).
 For convenience we now denote  
 $$\eta\coloneqq \frac{a}{A_2-A+3}$$ the slope of the parallel lines $l$ and $\tilde m$, so that their respective expression becomes:
\begin{equation}\label{eq:equation  l m eta}
    l(x)=\eta x-A_2\eta,\quad \tilde m(x)=\eta x+A\eta+(A_1-A)\frac{a}{A_1-A+3}
\end{equation}
(recall \eqref{eq:equations l and m} and \eqref{eq:equation tilde m}).
The following inequalities will play a key role
\begin{equation}\label{eq:bound eta}
\frac{2}{3\diam(\Omega)}\le \frac{a}{A_2}\le  \eta\le \frac{3a}{A_2}\le \frac3{A_2}< 1,	
\end{equation}
where in the first inequality we used that $A_2\le \diam(\Omega)$   and $a\ge 2/3$, in the second that $A\ge 3$, while in the third one we used  $A\le \frac23 A_2$ by \eqref{eq:superbad},  in the fourth step we used that $a\le 1$  and finally in the last step that $A_2\ge A> 3.$ In particular the slope $\eta$ is comparable to $\frac a{A_2}.$
\begin{figure}[!ht]
\centering
\includegraphics[width=14.5cm, height=7cm]{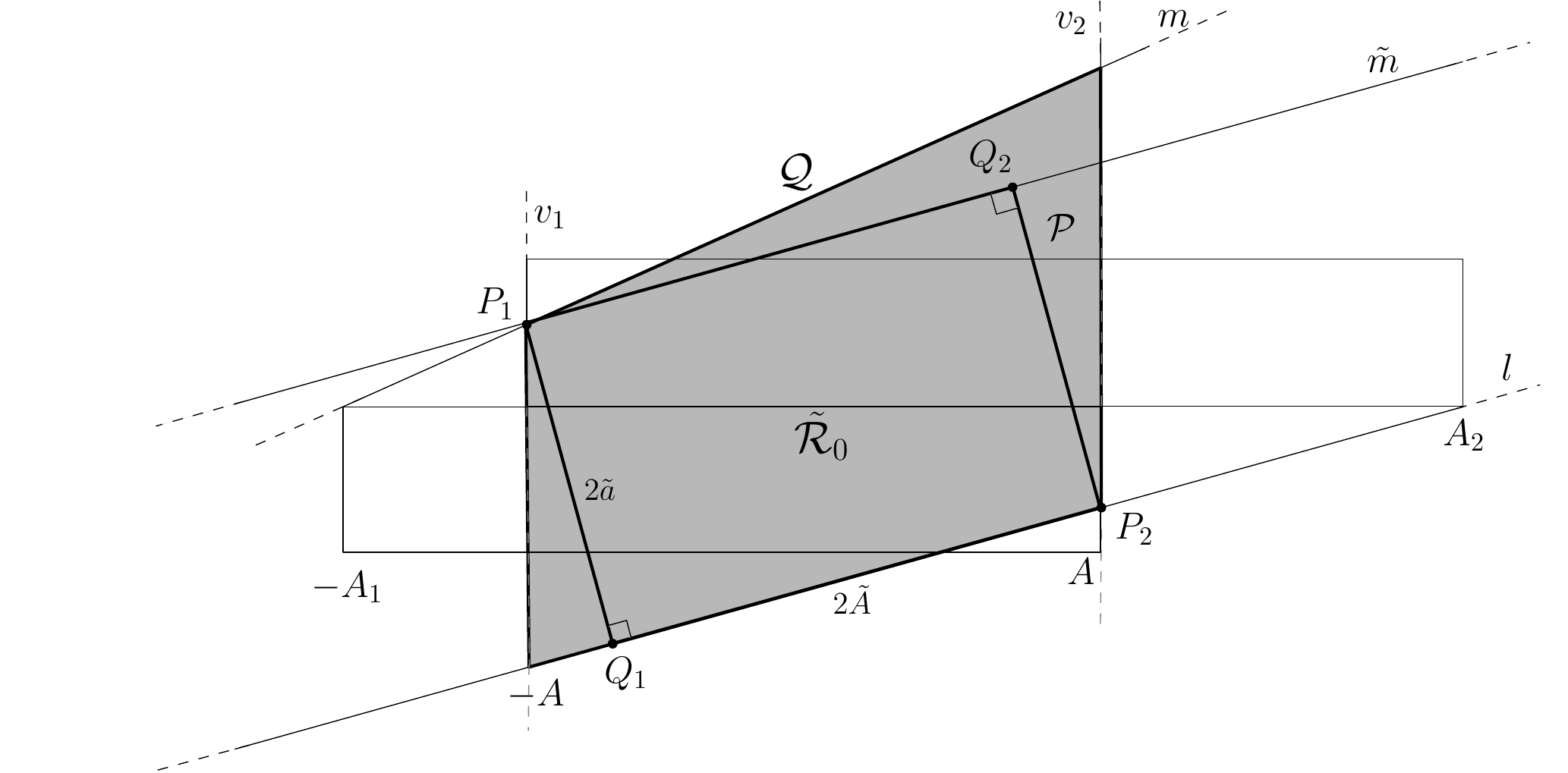}
\caption{The construction of the rectangle $\tilde {\mathcal R}_0$ of vertices $P_1,P_2,Q_1,Q_2$ in  \textsc{Case 3.b}. With $\mathcal Q$ and $\mathcal P$ are denoted the quadrilaterals enclosed  by the lines $v_1,v_2,$ $l$ and respectively $m$ or $\tilde m$}
\label{fig:case3.b}
\end{figure}

We can finally begin the verification of \eqref{eq:key ineq proof}.  Denote by $2\tilde a$ the length of the sides ${P_2Q_2}$ and $P_1Q_1$ and by $2\tilde A$ the length of the sides $Q_1P_2$, $P_1Q_2$. Then $2\tilde a$ is equal to the distance between the parallel lines $l$ and $\tilde m$ and is given by the following formula
\begin{equation}\label{eq:bound a}
	2\tilde a\coloneqq \frac{|\tilde m(0)-l(0)|}{\sqrt{1+\eta^2}}\overset{\eqref{eq:equation  l m eta}}{=}\frac{\eta (A_2+A)+(A_1-A)\frac{a}{A_1-A+3}}{\sqrt{1+\eta^2}}.
\end{equation}
Using  the expression of $\eta$ we can compute
\begin{equation}\label{eq:ez}
	\begin{split}
		2\tilde a-2a&= a\frac{\frac{A_2+A}{A_2-A+3}+\frac{A_1-A}{A_1-A+3}}{\sqrt{1+\eta^2}}-2a=\frac a{\sqrt{1+\eta^2}} \left(\frac{2A-3}{A_2-A+3}-\frac{3}{A_1-A+3}-2\sqrt{1+\eta^2}+2\right)\\
		&\ge \frac a{\sqrt{2}}\left(\frac{2A-3}{A_2-A+3}-\frac{3}{A_1-A}-\eta\right)\overset{ A\le 2/3A_1}{\ge} \frac{a}{\sqrt{2}}\left(\frac{2A-3}{A_2-A+3}-\frac{3}{A_1/3}-\eta\right)\\
  &\overset{ 2A_1\ge A_2}{\ge}\frac{a}{\sqrt{2}}\left(\frac{2A-3}{A_2-A+3}-\frac{18}{A_2}-\eta\right)\ge \frac{1}{\sqrt{2}}\left(\eta (2A-3)-18\eta-\eta\right)\\
  &\ge \frac{2A-22}{\sqrt{2}}\eta\ge 2\sqrt2 \eta,
	\end{split}
\end{equation}
where in the first inequality we used that $\sqrt{1+\eta^2}\le 1+\frac12 \eta $ and that $\eta \le 1$ (by \eqref{eq:bound eta}), in the fourth inequality we used that $\frac{a}{A_2}\le \eta $ (by \eqref{eq:bound eta}) and in the last step the assumption $A\ge 13$. Using the first in \eqref{eq:bound eta} we reach
\[
2\tilde a-2a\ge \frac{4\sqrt 2}{3\diam(\Omega)}\ge \frac{1}{3\diam(\Omega)},
\]
that is the second in \eqref{eq:key ineq proof}.
It is also useful to derive the following upper bound
\begin{equation}\label{eq:a-a upper}
    2\tilde a-2a=\frac{\eta (A_2+A)+a\frac{A_1-A}{A_1-A+3}}{\sqrt{1+\eta^2}}-2a
    \le 6+a-2a\le 6-\frac23
\end{equation}
where in the first inequality we used that $\eta\le 3/A_2$ (by \eqref{eq:bound eta}). In particular $2\tilde a\le 8<2A$, since by assumption $A> 4$ and $a\le 1$. This implies that $Q_1,Q_2$ are contained in the strip   $(-A,A)\times \rr$ and so the rectangle $\tilde{\mathcal R_0}$ is contained in the parallelogram $\mathcal P$ and thus is contained in $\cl( \Omega)$ by \eqref{eq:P contained}(see Figure \ref{fig:case3.b}).  This also shows that $\tilde{\mathcal R_0}\cap \partial \Omega \subset \{P_1,P_2\}$, indeed $\mathcal P\cap \partial \Omega $ is contained in the vertical lines $v_1,v_2$ (again by \eqref{eq:P contained}) and by construction $\tilde{\mathcal R_0}$ intersects these lines only at the vertices $P_1,P_2.$
 We pass to the proof of the first in \eqref{eq:key ineq proof}. 
The value of $2\tilde A= \overline{Q_1P_2}=\overline{Q_2P_1}$ can be computed using the Pythagorean theorem: 
\[
2\tilde A=\sqrt{\overline{P_1P_2}^2-\overline{P_1Q_1}^2}=\sqrt{\overline{P_1P_2}^2-(2\tilde a)^2}
\]
(see Figure \ref{fig:case3.b}).
Moreover, since $P_1=(-A,\tilde m(-A))$ and $P_2=(A,l(A)),$ we have
\begin{align*}
    \overline{P_1P_2}^2&=(2A)^2+\left|\tilde m(-A)-l(A)\right|^2=(2A)^2+\left|\frac{a(A_1-A)}{A_1-A+3}-\eta(A-A_2)\right|^2
\end{align*}
(recall \eqref{eq:equation  l m eta}).
Combining the last two identities and the expression for $2\tilde a$ given in \eqref{eq:bound a}
\begin{align*}
	(2\tilde A)^2&=\overline{P_1P_2}^2-(2\tilde a)^2\\
 &\ge (2A)^2+\left|\frac{a(A_1-A)}{A_1-A+3}-\eta(A-A_2)\right|^2-\left(\frac{a(A_1-A)}{A_1-A+3}+\eta (A+A_2)\right)^2\\
	&= (2A)^2-4\eta A\left(\eta A_2+\frac{a(A_1-A)}{A_1-A+3}\right)\ge (2A)^2-4\eta A\left(\eta A_2+1\right)\\
 &\ge  (2A)^2-16\eta A=(2A)^2 \left(1-\frac{4}{A}\eta\right),
\end{align*}
where in the last inequality we used $\eta\le \frac3{A_2}$ (see \eqref{eq:bound eta}).
Since $\eta\le 1$ (by \eqref{eq:bound eta}) and by assumption $A\ge 4,$ we have $\frac{4}{A}\eta\le 1$.
 Hence we can take the square root on both sides and obtain
\[
2\tilde A\ge 2A\sqrt{1-\frac{4}{A}\eta}> 2A- 8\eta,
\]
where we used
$\sqrt{1-t}>1-t$ for all $t\in(0,1)$. Combing this with \eqref{eq:ez} we obtain that
\begin{equation}\label{eq:key inequality end}
    \tilde A- A>{2\sqrt 2}( a-\tilde a),
\end{equation}
which is the first in \eqref{eq:key ineq proof}. It remains only to show that $\tilde a \le 1$ (i.e.\ the third in \eqref{eq:key ineq proof}). 
Combining \eqref{eq:key inequality end} and \eqref{eq:a-a upper} we have 
$$\tilde A\ge A-  {2\sqrt 2}( a-\tilde a)> 13 -{2\sqrt 2}\big(3-\frac13\big)>1,$$ 
hence we must have that $\tilde  a\le 1$, otherwise the rectangle $\tilde {\mathcal R_0}$ would have both sides of length strictly larger than $2$, which contradicts $r(\Omega)=1.$
\end{proof}

\section{Proof of the technical Lemma \ref{lem:trick set}}\label{sec:lemma}
Here we prove Lemma \ref{lem:trick set}, which was the main technical tool for the proof Theorem \ref{thm:main} and more precisely for the intermediate Proposition \ref{prop:cases}. We decided to isolate here its proof, because the result is essentially equivalent to the following independent lemma about upper semicontinuous functions in the unit interval.  Lemma \ref{lem:trick set} will be restated and proved  immediately after.
\begin{lemma}\label{lem:trick}
	Let $f:[0,1]\to \rr$ be an upper semicontinuous function and let $l:\rr\to \rr$ be  an affine function such that $l(0)\le f(0),$ $l(0)\le f(1).$ Suppose that
 \begin{equation}\label{eq:below}
     f(t)\le l(t), \quad  \forall \, t \in(t_0,t_1),
 \end{equation}
	for some $t_0,t_1\in [0,1]$, $t_0\le t_1.$ 
	Then for every $\lambda\le |t_0-t_1|$ there exist $s_0,s_1 \in[0,1]$ with $s_0<s_1$, $|s_0-s_1|\in [\lambda,2\lambda]$ and such that
 \begin{equation}\label{eq:f le l}
     f(t)\le \tilde l(t), \quad  \forall\, t \in(s_0,s_1)
 \end{equation}
	where $\tilde l$ is the affine function such that $\tilde l(s_i)=f(s_i),$ $i=0,1.$ Moreover if \eqref{eq:below} holds with strict inequality then $s_0,s_1$ can be chosen so that \eqref{eq:f le l} is strict as well.
\end{lemma}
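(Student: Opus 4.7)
My plan is to work with the concave envelope of $f$. After subtracting $l$ from $f$ I may assume $l\equiv 0$, so that $f$ is upper semicontinuous on $[0,1]$ with $f(0),f(1)\ge 0$ and $f\le 0$ on $(t_0,t_1)$. Let $\hat f$ denote the smallest concave majorant of $f$ on $[0,1]$: it is continuous, concave, $\ge f$, agrees with $f$ at the endpoints, and by concavity together with $\hat f(0),\hat f(1)\ge 0$ satisfies $\hat f\ge 0$ on $[0,1]$. The contact set $F:=\{\hat f=f\}$ is closed and its complement decomposes into maximal open ``affine pieces'' $(\alpha_i,\beta_i)$ on each of which $\hat f$ equals the chord through $(\alpha_i,f(\alpha_i))$ and $(\beta_i,f(\beta_i))$; in particular $f\le \hat f=\text{chord}$ on every such piece.

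Next I would identify a maximal affine piece $(\alpha,\beta)$ covering the dip, with $\alpha\le t_0<t_1\le \beta$. When $F\cap(t_0,t_1)=\emptyset$ one simply takes $\alpha=\sup(F\cap[0,t_0])$ and $\beta=\inf(F\cap[t_1,1])$; otherwise any $c\in F\cap(t_0,t_1)$ forces $f(c)=\hat f(c)=0$ (combining $\hat f\ge 0$ with $f(c)\le 0$), so the contact structure is already rich enough in the dip and one can reduce to a sub-interval where $f<0$ strictly. In any case $\beta-\alpha\ge t_1-t_0\ge \lambda$. If moreover $\beta-\alpha\le 2\lambda$ the conclusion is immediate: set $s_0=\alpha$, $s_1=\beta$, and let $\tilde l$ be the affine chord of $\hat f$ on $[\alpha,\beta]$.

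The main obstacle is the long-piece case $\beta-\alpha>2\lambda$. To handle it I iterate the construction inside $[\alpha,\beta]$, working with $g:=f-L$ where $L$ is the chord on $[\alpha,\beta]$, so that $g\le 0$ on $[\alpha,\beta]$ with $g(\alpha)=g(\beta)=0$. I would then study the concave envelope of $g$ on a sliding window $[s,s+2\lambda]\subseteq[\alpha,\beta]$: as $s$ varies continuously the envelope structure varies, and by a compactness/intermediate-value argument one can find a window on which the envelope either coincides with the single chord (yielding $(s_0,s_1)=(s,s+2\lambda)$) or has at most one interior contact point, producing two affine sub-pieces of total length $2\lambda$ and hence one of length in $[\lambda,2\lambda]$. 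If no window of size $2\lambda$ behaves this way, one appeals to the scale invariance of the problem to recurse on a shorter interval, exploiting the vanishing of $g$ at the endpoints and the sign constraint $g\le 0$ to control the affine sub-pieces at each step.

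The strict inequality improvement follows in parallel: if $f<l$ is strict on $(t_0,t_1)$, then $f<\hat f$ is strict on the interior of the affine piece containing the dip, and this strictness passes to $f<\tilde l$ on $(s_0,s_1)$ since the selected chord lies strictly above $f$ throughout the interior. The main technical difficulty is managing the long-piece case, since a naive pigeonhole on the lengths of the affine sub-pieces of the concave envelope within a size-$2\lambda$ window does not by itself guarantee one of length $\ge \lambda$; the argument needs the iterative/sliding refinement described above to ensure that at some stage an affine sub-piece of length in $[\lambda,2\lambda]$ is actually produced.
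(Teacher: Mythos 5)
Your reduction to the concave envelope handles the easy regime correctly (and the strictness transfer at the end is fine), but the crux of the lemma is exactly the case you defer, and the mechanism you propose for it does not work. In the long-piece case you claim that, by sliding a window $[s,s+2\lambda]$ and a compactness/intermediate-value argument, one finds a window whose concave envelope is a single chord or has at most one interior contact point. There is no reason for this, and it is false in general: take $g=-1$ on $(\alpha,\beta)$ except $g=-\delta$ on the grid $\{\alpha+k\lambda/10\}$ (and $g(\alpha)=g(\beta)=0$); this $g$ is upper semicontinuous, every window of size $2\lambda$ contains many interior contact points, and every affine piece of every windowed envelope has length $\le\lambda/10$. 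So no pigeonhole or IVT on envelope pieces produces an interval of length $\ge\lambda$, and your fallback of "recursing on a shorter interval" only makes the target harder, since the required lower bound $|s_0-s_1|\ge\lambda$ is absolute. (Note the lemma is still true in that example --- take $s_0,s_1$ two grid points at distance $\lambda$ --- which shows the witness need not be an affine piece of any concave envelope.) A secondary gap of the same nature occurs earlier: when the contact set meets $(t_0,t_1)$ you "reduce to a sub-interval where $f<0$ strictly", but that sub-interval may have length $<\lambda$, destroying the hypothesis $\lambda\le|t_0-t_1|$ you need to restart the argument.

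What is missing is a quantitative halving step, which is how the paper proceeds. After normalizing to $f(0)=f(1)=0$, $f\le 0$ on $(0,1)$, one compares $M=\max_{[1/2,1-\eps]}f$ with $\max_{[\eps,1/2]}f$; assuming the right half wins, one takes $s_1$ realizing $M$, draws the line $r$ through $(0,0)$ and $(s_1,M)$ (which satisfies $r\ge M$ on $[0,s_1]$ since $M\le 0$), and sets $s_0=\max\{t\in[0,\eps]:f(t)\ge r(t)\}$. This yields $f\le r\le\tilde l$ on $(s_0,s_1)$ with $|s_0-s_1|\in[\tfrac12-\eps,1-\eps]$, i.e.\ a sub-dip of length between roughly half and all of the original. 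Iterating and stopping the first time the length drops below $2\lambda$ lands it in $[\lambda,2\lambda]$ (up to an $\eps$ handled by a separate limiting argument using upper semicontinuity). Your write-up contains no substitute for this step, so as it stands the proof is incomplete at its central point.
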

\begin{proof}
 It is enough to prove that for every $\eps>0$ there exist $s_0,s_1 \in[0,1]$ with $s_0<s_1$, $|s_0-s_1|\in [\lambda-\eps,2\lambda]$ and such that \eqref{eq:f le l} holds (with strict inequality if \eqref{eq:below} is also strict). Indeed if this is true then we can find  a sequence of points $s_0^n<s_1^n$ such that \eqref{eq:f le l} holds for $s_0^n,s_1^n$ (with strict inequality if \eqref{eq:below} is also strict)  and $s_0^n\to s_0$ and $s_1^n\to s_1$ for some  $s_0,s_1 \in[0,1]$ with $s_0<s_1$, $|s_0-s_1|\in [\lambda,2\lambda]$. Then \eqref{eq:f le l}  (for $s_0^n,s_1^n$)  can be rewritten as 
\begin{equation}\label{eq:flel2}
    f(t)\le \frac{(s_1^n-t)f(s_0^n)+(t-s_0^n)f(s_1^n)}{s_1^n-s_0^n}, \quad \forall t \in (s_0^n,s_1^n).
\end{equation}
In particular, since $f$ is upper-semicontinuous, taking the limsup on the right hand side we obtain that \eqref{eq:f le l} holds also  for $s_0,s_1.$ Moreover if \eqref{eq:below} is strict we claim that we can choose $s_0,s_1$ so that \eqref{eq:f le l} also holds with strict inequality. If $|s_0-s_1|\in(\lambda,2\lambda]$ we can just take $s_0=s_0^n,s_1=s_0^n$ for $n$ big enough, hence we assume that $|s_0-s_1|=\lambda$. We also assume from now on that \eqref{eq:f le l} holds with equality for some $t \in (s_0,s_1) $, i.e.\ that $f(t)=\tilde l(t)$, where  $\tilde l$ is the affine function such that $\tilde l(s_i)=f(s_i),$ $i=0,1.$ If this was not the case we would be done. Suppose now that exists a subsequence for which  $|s_1^n-s_0^n|\ge |s_0-s_1|$ holds.  Then we can just replace $s_0,s_1$ with $s_0^n,s_1^n$ for $n$ big enough (for which in particular \eqref{eq:f le l} holds with strict inequality). Hence suppose that $|s_1^n-s_0^n|< |s_0-s_1|$ for every $n$ big enough, i.e.\ for every $n$ big enough either $s_1^n<s_1$ or  $s_0^n>s_0$ holds. Up to symmetry and up to passing to a subsequence we can assume that $s_0^n>s_0$ holds for every $n$. In particular, since $s_0^n\in (s_0,s_1)$ for $n$ big enough, by \eqref{eq:f le l} for $s_0,s_1$ we have that $f(s_0^n)\le \tilde l(s_0^n)$.  Moreover for $n$ big enough we have   $s_0<s_0^n<t<s_1^n$, where $t$ is as above. Hence we must have that $f(s_1^n)>\tilde l(s_1^n),$ otherwise, since \eqref{eq:f le l} for $s_0^n,s_0^n$ holds with strict inequality, we would have  $f(t)<\tilde l(t).$   Again by \eqref{eq:f le l} (for $s_0,s_1$) this also shows  that $s_1^n\ge s_1.$ Hence taking $s_0'=s_0$ and $s_1'=s_1^n$ we obtain that \eqref{eq:f le l} holds for $s_0'$ and $s_1'$ with strict inequality (recall that $\tilde l(s_0^n)\ge f(s_0^n)$). Moreover for $n$ big enough we have $|s_0'-s_1'|\in [\lambda,2\lambda],$ indeed $|s_0'-s_1'|\ge |s_0-s_1|=\lambda$ and $s_1^n\to s_1.$ 

Summarizing it remains to prove that:
\begin{equation}\label{eq:it remains}
    \parbox{13cm}{for every $\lambda \le|t_0-t_1|$ and  every $\eps>0$ there exist $s_0,s_1 \in[0,1]$ with $s_0<s_1$, $|s_0-s_1|\in [\lambda-\eps,2\lambda]$ and such that \eqref{eq:f le l} holds (with strict inequality if \eqref{eq:below} is also strict).}
\end{equation}
We prove statement \eqref{eq:it remains} in two steps.

{\noindent {\bf Step 1}:} $t_0=0$, $t_1=1.$  
If $\lambda\ge \frac12$ we can simply take $s_0=0$ and $s_1=1$. Indeed the affine function $\tilde l$ that agrees with $f$ at $0$ and $1$ satisfies $\tilde l\ge l$  in $[0,1]$. Hence by \eqref{eq:below} we have that \eqref{eq:f le l} holds (with strict inequality if \eqref{eq:below} is also strict). Hence we only need to prove \eqref{eq:it remains} for $\lambda\le \frac12.$ 
We claim that to do this it is enough to  show that:
\begin{equation}\label{eq:step 1 trick}
    \parbox{13cm}{for every $\eps\in(0,\frac12)$ there exist $s_0,s_1\in [0,1]$ with $s_0<s_1$, $|s_0-s_1|\in [\frac{1}{2}-\eps,1-\eps]$ and satisfying \eqref{eq:f le l}  (with strict inequality if \eqref{eq:below} is strict).}
\end{equation}
To see this observe that every $s_0,s_1$ that satisfy \eqref{eq:f le l} also satisfy the hypotheses of the statement (up to scaling and translating $[s_0,s_1]$ to $[0,1]$) with $t_0=s_0$ and $t_1=s_0$, i.e.\ under the additional assumptions of the current Step 1.
Therefore, setting $s_0^0\coloneqq 0,$ $s_1^0\coloneqq 1$  and iterating \eqref{eq:step 1 trick}, for every  $n \in \nn$  we can find  $s_0^n,s_1^n \in[0,1]$ such that
 $|s_0^n-s_1^n|\in ((\frac{1}{2}-\eps)|s_0^{n-1}-s_1^{n-1}|,(1-\eps)^n)$ and \eqref{eq:f le l} holds with $s_0=s_0^n$ and $s_1=s_1^n$  (with strict inequality if \eqref{eq:below} is strict). In particular $|s_0^n-s_1^n|\to 0.$ Then for any $\lambda\le \frac12 $ we can choose the first $n\in \nn $ such that $|s_0^n-s_1^n|<2\lambda$, in particular $|s_0^{n-1}-s_1^{n-1}|\ge 2\lambda$ (this is true even if $n=1$, since $|s_0^0-s_1^0|=1$ and $\lambda \le \frac12$). Hence 
 $$|s_0^n-s_1^n|\ge (\frac{1}{2}-\eps) |s_0^{n-1}-s_1^{n-1}| \ge (\frac{1}{2}-\eps)2\lambda.$$
 This would prove \eqref{eq:it remains} and would conclude the proof in the Step 1 case.

We pass to the proof of \eqref{eq:step 1 trick}.
It  is enough to consider the case $f(0)=l(0)$, $f(1)=l(1)$, otherwise we can replace $l$ with the affine function agreeing with $f$ at 0 and 1, which still satisfies the hypotheses, being bigger than $l$ in $[0,1].$ Up to adding an affine  function to both $f$ and $l$, we can further assume that $f(0)=f(1)=0$. In particular $f\le 0$ in $(0,1)$ (with strict inequality if \eqref{eq:below} is strict).Without loss of generality we can assume that 
	\begin{equation}\label{eq:ass}
			M\coloneqq \max_{[1/2,1-\eps]} f\ge 	\max_{[\eps,1/2]} f.
	\end{equation}
	 Note that both maximum exist by upper-semicontinuity and that $M\le 0$  (with strict inequality if \eqref{eq:below} is strict). Let $s_1 \in [1/2,1-\eps]$ be such that $f(s_1)=M$. Consider the affine function $r$ such that $r(s_1)=f(s_1)=M$ and $r(0)=0=f(0).$ Set
	 \[
	 s_0=\max \{t \in [0,\eps]\ : \ f(t)\ge r(t) \},
	 \]
	 which exists again by upper-semicontinuity. Clearly $|s_0-s_1|\in [\frac{1}{2}-\eps,1-\eps]$. Moreover by \eqref{eq:ass} and since $r\ge M$ in $(-\infty,s_1)$ (with strict inequality if \eqref{eq:below} is strict) we have
	 $$
	 f(t)\le r(t), \quad t \in (s_0,s_1)
	 $$
  (with strict inequality if \eqref{eq:below} is strict).
	The conclusion follows noting that, since $f(s_i)\ge r(s_i)$, $i=0,1,$ then $r\le \tilde l$ in  $(-\infty,s_1]$, where $\tilde l$ is the affine function such that $\tilde l(s_i)=f(s_i),$ $i=0,1.$
	
	 	{\noindent {\bf Step 2}:} $t_0,t_1$ arbitrary.  Fix any $\lambda\ge 0$ such that $\lambda \le |t_0-t_1|$ and  any $\eps>0$. Let  $l$ be  as in the statement. Set
	 	\[
	 	t_0'\coloneqq\max\{s \in[0,t_0]\ : \ f(s)\ge l(s)\}, 	\quad 	t_1'\coloneqq\min\{s \in[t_1,1] \ : \ f(s)\ge l(s)\}.
	 	\]
	 	Both maximum and minimum exist finite because $f$ is upper-semicontinuous and by assumption $f(0)\ge l(0)$ and $f(1)\ge l(1).$ Clearly $[t_0,t_1]\subset [t_0',t_1'].$  Then $f	\le l$ in $(t_0',t_1')$, with strict inequality if \eqref{eq:below} was strict. Denoting by $l'$  the affine function such that $ l'(t_i')=f(t_i'),$ $i=0,1,$ we have $f\le l\le l'$ in $(t_0',t_1')$, indeed $l'(t_i')=f(t_i')\ge l(t_i')$, $i=0,1$. Therefore, up to  rescaling the interval $[t_0',t_1']$ to $[0,1]$, we can apply \eqref{eq:it remains} (in the case $t_0=0,t_1=1$, which is true by  Step 1) 
   and deduce that for all  $ \lambda' \le 1$ 
 there exist $s_0,s_1 \in [t_0',t_1']$ such that 
 $$|s_0-s_1|\in [(\lambda'-\eps)|t_0'-t_1'|, 2\lambda'|t_0'-t_1'|]$$
 and \eqref{eq:f le l} holds (with strict inequality if \eqref{eq:below} was strict). In particular $|s_0-s_1|\in [\lambda'|t_0'-t_1'|-\eps, 2\lambda'|t_0'-t_1'|]$, since $|t_0'-t_1'|\le 1$. Taking  $\lambda'=\frac\lambda {|t_0'-t_1'|}\le \frac{|t_0-t_1|}{ |t_0'-t_1'|}\le  1$ proves \eqref{eq:it remains}. This concludes the proof.
\end{proof}
With the above result the key Lemma \ref{lem:trick set} now follows almost immediately. We restate it below for the convenience of the reader,
\begin{lemma}[Two points above-interval below]
    Let $C\subset \rr^2$ be a closed  set such that $C\subset \{(x,y)\ : \ y\le a\}$ for some $a \in \rr$ and let  $y=l(x)$ be a (non-vertical) line.   Suppose that:
    \begin{enumerate}[label=\roman*)]
        \item $C$ contains two points $(x_i,y_i)$, $i=0,1$, such that $y_i\ge l(x_i)$,  $i=0,1,$
        \item $C\cap \{x=t\}\neq\emptyset$ for all $t\in[x_0,x_1]$,
        \item there exists $[t_0,t_1]\subset [x_0,x_1]$ such that $C\cap\{t_0<x<t_1\}\subset l_\leq\coloneqq \{(x,y) \ : \ y\le l(x)\}$.
    \end{enumerate}
Then for every $\lambda\le |t_0-t_1|$ there exists $[s_0,s_1]\subset [x_0,x_1] $ with  $|s_0-s_1|\in [\lambda,2\lambda]$ and a line $y=\tilde l(x)$ such that  $C\cap\{s_0<x<s_1\}\subset \tilde l_\leq$ for every $t\in(s_0,s_1)$ and such that $(s_i,l(s_i))\in C$, $i=0,1.$ Moreover if the inequality in \ref{it:below} is strict, i.e.\  $C\cap\{t_0<x<t_1\}\subset l_<$,  we can also choose $t_0,t_1$ so that $C\cap\{s_0<x<s_1\}\subset \tilde l_<$.
\end{lemma}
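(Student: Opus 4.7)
The plan is to reduce Lemma~\ref{lem:trick set} to the one-dimensional Lemma~\ref{lem:trick} by extracting from $C$ the upper envelope of its vertical slices. Concretely, for $t \in [x_0,x_1]$ define
\[
f(t) \coloneqq \max\{y : (t,y) \in C\}.
\]
The maximum is attained because hypothesis \ref{it:vertical} makes the slice non-empty, the containment $C \subset \{y \le a\}$ bounds it above, and closedness of $C$ makes the slice closed in $\mathbb{R}$. Moreover $f$ is upper semicontinuous: if $t_n \to t$, then any subsequential limit of $(t_n,f(t_n))$ (which has second coordinate bounded above by $a$) lies in $C$ by closedness and has first coordinate $t$, hence its second coordinate is at most $f(t)$; thus $\limsup f(t_n) \le f(t)$.

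After the affine rescaling that sends $[x_0,x_1]$ to $[0,1]$, the hypotheses of Lemma~\ref{lem:trick} are straightforward to verify. At the endpoints, hypothesis \ref{it:above} together with the definition of $f$ gives $l(x_i) \le y_i \le f(x_i)$ for $i=0,1$. On $(t_0,t_1)$, hypothesis \ref{it:below} asserts that every point of $C$ in that vertical strip lies below $l$, which translates to $f \le l$ on $(t_0,t_1)$; if the spatial hypothesis is strict, so is this functional inequality. The constraint $\lambda \le |t_0-t_1|$ rescales correctly to an admissible parameter for Lemma~\ref{lem:trick}.

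Applying Lemma~\ref{lem:trick} and rescaling back to $[x_0,x_1]$, we obtain $s_0 < s_1$ with $|s_0-s_1| \in [\lambda,2\lambda]$ together with an affine function $\tilde l$ satisfying $\tilde l(s_i) = f(s_i)$ and $f(t) \le \tilde l(t)$ for all $t \in (s_0,s_1)$ (strictly if the original strict hypothesis holds). Translating back: any point $(t,y) \in C$ with $t \in (s_0,s_1)$ satisfies $y \le f(t) \le \tilde l(t)$, so $C \cap \{s_0 < x < s_1\} \subset \tilde l_{\le}$, and $(s_i, \tilde l(s_i)) = (s_i, f(s_i)) \in C$ by the very definition of $f$. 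This is precisely the conclusion of the lemma.

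The only mildly delicate step is establishing that the pointwise supremum defining $f$ is genuinely attained and that $f$ inherits upper semicontinuity from the closedness of $C$; both facts rely on the global bound $C \subset \{y \le a\}$. Once this preparatory observation is in place, Lemma~\ref{lem:trick set} is essentially a notational transcription of Lemma~\ref{lem:trick}, and no genuine obstacle remains.
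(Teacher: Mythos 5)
Your proof is correct and follows essentially the same route as the paper: define $f(x)\coloneqq\max\{y:(x,y)\in C\}$, check that the maximum is attained and that $f$ is upper semicontinuous using the bound $C\subset\{y\le a\}$ and closedness, verify the hypotheses of Lemma \ref{lem:trick}, and transfer its conclusion back to $C$ after rescaling. No issues.
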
 
\begin{proof}
Define  $f:[x_0,x_1]\to \rr$ by
    \[
	f(x)\coloneqq \max \{y \ : (x,y) \in C\}. 
	\]
The maximum exists finite for every $x \in[x_0,x_1]$ because of the assumption  $C\subset \{(x,y)\ : \ y\le a\}$ and since $C$ is closed and intersects all vertical lines $\{x=t\}$ for all $t \in [x_0,x_1]$. In particular $(x,f(x))\in C$ for every $x \in [x_0,x_1].$   We claim that $f$ is upper-semicontinuous. Indeed let $x \in [x_0,x_1]$ and $x_n \in (x_0,x_1)$ with $x_n \to x$. Take a subsequence $x_{n_k}$ so that $y\coloneqq \lim_k f(x_{n_k})=\limsup_n f(x_n).$ Then $(x_{n_k},f(x_{n_k}))\to (x,y)\in C$, by closeness, hence by definition $f(x)\ge y$, which proves the claim. Moreover we have $f(x_i)\ge l(x_i)$ for $i=0,1$, because by assumption the points $(x_i,y_i)$, $i=0,1,$ with $y_i\ge l(x_i)$, belong to $C.$ Finally $f\le l$ in $(t_0,t_1)$ by construction, with strict inequality if we have a strict inequality in $iii)$. Hence an application of Lemma \ref{lem:trick}, up to a rescaling, yields the result.
\end{proof}

\def\cprime{$'$} \def\cprime{$'$}

\end{document}